\theoremstyle{plain}
\newtheorem{theorem}{Theorem}[section]
\newtheorem{proposition}[theorem]{Proposition}
\newtheorem{lemma}[theorem]{Lemma}
\newtheorem{corollary}[theorem]{Corollary}
\theoremstyle{definition}
\newtheorem{definition}[theorem]{Definition}
\newtheorem{example}[theorem]{Example}
\theoremstyle{remark}
\newtheorem{remark}[theorem]{Remark}
\newenvironment{prf}{{\noindent \textbf{Proof:}\ }}{\hfill $\Box$ \\ \smallskip}
\numberwithin{equation}{section}
\title{Additivity of higher rho invariant for topological structure group from a differential point of view}
\author[1]{Baojie Jiang}
\author[2 $\dagger$]{Hongzhi Liu}
\affil[1]{College of Mathematics and Statistics, Chongqing University,\protect\\ Chongqing 401331, P. R. China.\protect\\ e-Mail: jiangbaojie@gmail.com}
\affil[2]{School of Mathematics, Shanghai University of Finance and Economics,\protect\\ Shanghai 200433, P. R. China.\protect\\ e-Mail: liu.hongzhi@mail.shufe.edu.cn}
\affil[$\dagger$]{Corresponding author}
\date{\today}
\begin{document}
\maketitle

\abstract{
In \cite{WXY16}, Weinberger, Xie and Yu proved that higher rho invariant associated to homotopy equivalence defines a group homomorphism from the topological structure group to analytic structure group, $K$-theory of certain geometric $C^*$-algebras, by piecewise-linear approach. In this paper, we adapt part of  Weinberger, Xie and Yu's work, to give a differential geometry theoretic proof of the additivity of the map induced by higher rho invariant associated to homotopy equivalence on topological structure group.

\medskip

\noindent{\textit{Mathematics Subject Classification (2010).}} 58J22.

\noindent{\textit{Keywords:}} signature operator, $K$-theory, topological structure group, higher rho invariant, surgery exact sequence.

\section{Introduction}

In 2005, Higson and Roe (cf: \cite{HR1,HR2, HR3}) constructed a transformation from smooth surgery exact sequence to the $K$-theory exact sequence of geometric $C^*$-algebras (also called the analytic surgery exact sequence).
In particular, higher rho invariant associated to smooth homotopy equivalence between two smooth manifolds induces a set theoretic map from smooth strucuture set to $K$-theory of certain $C^*$-algebras. To their end, Higson and Roe developed two approaches to the construction of higher rho invariant: the piecewise-linear approach, giving rise to theory of signature of bounded Hilbert-Poincar\'e complex, and the differential geometric approach, giving rise to theory of signature of unbounded Hilbert-Poincar\'e complex.

Later, Piazza and Schick gave an index theoretic construction of higher rho invariant associated to homotopy equivalence (cf: \cite{PS2014a, PS2016a}).
In \cite{Z17}, Zenobi extended the work of Higson and Roe, Piazza and Schick to the case of topological manifolds and showed that the higher rho invariant is a well-defined set theoretic map form the structure group of topological manifolds to $K$-theory of certain $C^*$-algebras.
 
Unlike smooth structure set, the topological structure set carries a group structure.
At the time, it was an open question whether the higher rho invariant induces group homomorphism on topological structure group.
In the breakthrough work of Weinberger, Xie and Yu (\cite{WXY16}), the question was answered positively in complete generality.
There are two major novelties of their work:
\begin{enumerate}
	\item[(1)] It gives a new description of the topological structure group in terms of smooth manifolds with boundary.
More specifically, the new description of structure group allows one to replace topological manifolds in the usual definition of structure group (of topological manifolds) by PL or smooth manifolds with boundary.
Such a description leads to a transparent group structure given by disjoint union;
	\item[(2)] It develops a theory of higher rho invariants in this new setting, in which higher rho invariants are easily seen to be additive. The additivity of higher rho invariants allows them to give an estimation of nonrigidity of topological manifold.
\end{enumerate}
In \cite{WXY16}, the theory of higher rho invariants is built up in terms of the PL version of signature operators, or in other words, bounded Hilbert-Poincar\'e complexes.
In this paper, we adapt this part of \cite{WXY16} to work on differential geometric version of signature operators, i.e., unbounded Hilbert-Poincar\'e complexes.
Although it seems to be straightforward in principle, it is quite nontrivial to carry out all the technical details. Besides, a differential geometric approach to Weinberger, Xie and Yu's theory benefits us at the computation of higher rho invariant on topological structure group. 

This paper is organized as follows.
In section \ref{section Preliminary}, we collect notations and basic facts on geometric $C^*$-algebras.
In section \ref{homotopy invariance of higher signature}, we recall Signature operators and homotopy invariance of higher signature.
Our treatment in section \ref{homotopy invariance of higher signature} rely on a detailed understanding of \cite{HS92,W2013a}.
In section \ref{subsection Define for element of L} and section \ref{Define for element of N} we examine index map for $L_n(\pi_1 X)$ and local index map for $\mathcal{N}_n(X)$ respectively at great length.
Roughly speaking, under our construction, homotopy equivalence represents zero element in $L_n(\pi_1 X)$, while the \emph{infinitesimal controled homotopy equivalence} represents zero element in $\mathcal{N}_n(X)$ (\cite{WXY16}).
It should be point out that the index map for $L_n(\pi_1 X)$ is well-known if one uses the classical definition of $L$-groups.
The key point here is that we using Wall’s geometric definition of $L$-groups (cf: \cite{W1970a}), which is given in terms of manifolds with boundary (with additional data).
Therefore, a new construction of the index map is needed.
In section \ref{section Mapping surgery to analysis}, we define higher rho map on topological structure group and prove its additivity. Together With the aid of the results of section \ref{subsection Define for element of L} and section \ref{Define for element of N}, for closed oriented topological manifolds of dimension $n\geq 5$, we also construct the mapping surgery to analysis.
For the convenience of readers, we recall the reinterpretation of structure group, normal group, $L$-group given by Weinberger, Xie, and Yu in \cite{WXY16} in Appendix \ref{section WXY surgery sequence}, and Poincar\'e duality operators in Appendix \ref{section Poin duality oper}.

Throughout this article, manifolds are suppose to be oriented.
All maps between manifolds are suppose to be oriented-preserving.
We will use $G$ denote a countable discrete group unless otherwise specified.

\medskip

\noindent{\textit{Acknowledgements.}} Thanks Yu Guoliang for suggesting us this question and all those necessary guidance.
We also want to thank Xie Zhizhang for his help.
We would like also to thank Shanghai Center for Mathematical Sciences and Department of Mathematics of Texas A\&M University for their hospitality. 
The research is partially supported by the Fundamental Research Funds for the Central Universities (2019CDXYST0015) and NNSF of China (11771092).

\section{Preliminary}\label{section Preliminary}
In this section, we collect some basic notations and terminologies needed in this paper.
We refer the reader to \cite{R1996a,Y97} for more details.

Let $X$ be a \emph{proper} metric space (a metric space is called \emph{proper} if every closed ball is compact).
An \emph{$X$-module} is a separable Hilbert space equipped with a $*$-representation $\pi$ of $C_0(X)$, where $C_0(X)$ is the $C^*$-algebra of all complex-valued continuous functions on $X$ which vanish at infinity.
An $X$-module is called \emph{nondegenerate} if the $*$-representation $\pi$ of $C_0(X)$ is nondegenerate.
An $X$-module is said to be \emph{standard} if no nonzero function in $C_0(X)$ acts as a compact operator.
When $H_X$ is an $X$-module, for each $f\in C_0(X)$ and $\xi\in H_X$, we denote $\pi(f)(\xi)$ by $f\xi$.

\begin{definition}\label{def propa local pseudo}
Let $H_X$ be an $X$-module, $T:H_X\to H_X$ a bounded linear operator.
\begin{enumerate}
\item The \emph{propagation} of $T$ is defined to be $\sup\{d(x,y) ~|~ (x,y)\in \text{Supp}(T) \},$ where $\text{Supp}(T)$ is the complement (in $X\times X$) of the set of points $(x,y) \in X\times X$ for which there exists $f,g\in C_0(X)$ such that $gTf=0$ and $f(x)g(y)\neq 0$;
\item $T$ is said to be \emph{locally compact} if the operators $fT$ and $Tf$ are compact for all $f\in C_0(X)$.
\end{enumerate}
\end{definition}

\begin{definition}\label{def roe and localization}
Let $H_X$ be a standard nondegenerate $X$-module and $B(H_X)$ the set of all bounded linear operators on $H_X$.
\begin{enumerate}
\item The \emph{$Roe$-algebra} of $X$, denoted by $C^*(X)$, is the $C^*$-algebra generated by all locally compact operators in $B(H_X)$ with finite propagation ;
\item The \emph{Localization algebra} $C_L^*(X)$ is the norm completion of the algebra of all bounded and uniformly-norm continuous functions $f : [1,\infty)\to C^*(X)$ such that 	
\[
\text{propagation of }f(t)\to 0\  \text{~as~}  t \to \infty,
\]
with respect to the norm $\|f\|_\infty := \sup_{t\geq 1}\|f(t)\|$.
\item The \emph{obstruction algebra} $C^*_{L,0}(X)$ is defined to be the kernel of the \emph{evaluation map}
\begin{center}
\begin{tabular}{rccc}
$ev$:&$C_L^*(X)$&$\to$ &$C^*(X)$\\
&$f$&$\mapsto$&$f(1)$.\\
\end{tabular}	
\end{center}
In particular, $C^*_{L,0}(X)$ is an ideal of $C_L^*(X)$;

\item If $Y$ is a subspace of $X$, then $C^*_L(Y;X)$ is defined to be a closed subalgebra of $C_L^*(X)$  generated by all elements $f$ such that there exist $c_t>0$ satisfying $\lim_{t\to \infty} c_t=0$, and $\text{Supp}(f(t))\subset \{(x,y)\in X\times X~|~ d((x,y), Y\times Y)\leq c_t\ \}$ for all $t$.
Similarly, we can define $C^*_{L,0}(Y;X)$ which is a closed subalgebra of $C^*_{L,0}(X)$.
\end{enumerate}
\end{definition}

Let $G$ be a (countable) discrete group and acts properly on $X$ by isometries.
Let $H_X$ be a $X$-module, the $*$-representation of $C_0(X)$ denoted by $\pi$.
Let $u:G\to B(H_X)$ be a unitary representation of $G$ which is covariant in the sense that
\[
u_g \pi(f)=\pi(g.f)u_g
\]
where, $g.f(x)= f(g^{-1}x)$ for any $f\in C_0(X)$, $g \in G$.
Such a triple $(H_X, G, \pi,u)$ is called a \emph{covariant system}.

\begin{definition}
A covariant system $(H_X, G, \pi,u)$ is called \emph{admissible} if
\begin{enumerate}
\item The action of $G$ is \emph{proper} and \emph{cocompact};
\item $H_X$ is a nondegenerate standard $X$-module;
\item For each $x\in X$, the \emph{stabilizer group} $G_x$ acts on $H_X$ regularly in the sense that the action is isomorphic to the obvious action of $G_x$ on $l^2(G_x)\otimes H$ for some infinite dimensional Hilbert space $H$.
Here $G_x$ acts on $\ell^2 (G_x)$ by (left) translations and acts on $H$ trivially.
\end{enumerate}
\end{definition}

We remark that, for any locally compact metric space $X$ with a proper and cocompact isometric action of $G$, such an admissible covariant system $(H_X, G, \pi,u)$ always exists.
We will simply denote an admissible covariant system $(H_X, G, \pi,u)$ by $H_X$ and call it an \emph{admissible} $(X, G)$-module.

\begin{definition}
Let $X$ be a locally compact metric space $X$ with a proper and cocompact isometric action of $G$.
Let $H_X$ be an admissible $(X, G)$-module, denoted by $\mathbb{C}[X]^G$ the $*$-algebra of all $G$-\emph{invariant} locally compact operators in $B(H_X)$ with finite propagation.
The $G$-\emph{equivariant Roe algebra} of $X$, denoted by $C^*(X)^G$, is defined to be the operator norm closure of $\mathbb{C}[X]^G$ in $B(H_X)$.
Similarly, we can define $C_L^*(X)^G$, $C_{L,0}^*(X)^G$, $C_{L}^*(Y;X)^G$ and $C_{L,0}^*(Y;X)^G$.
\end{definition}

\begin{remark}
Up to isomorphism, $C^*(X)$ does not depend on the choice of the standard nondegenerate $X$-module $H_X$.
The same holds for $C_L^*(X)$, $C_{L,0}^*(X)$, $C_L^*(Y;X)$, $C_{L,0}^*(Y;X) $ and their $G$-\emph{equivariant} versions.
\end{remark}

\section{Homotopy invariance of higher signature}\label{homotopy invariance of higher signature}
The purpose of this section is to review the literature on the homotopy invariance of the higher signatures for closed manifolds (\cite{HR1,HR2,HS92}).
We begin with the definition of signature operator, which is consistent with the one in \cite[Section 5]{HR2}.

Let $M$ be an oriented, closed smooth (complete Riemannian) manifold of dimension $n$ and let $ \tilde{M}$ be a regular $G$-covering space of $M$, where $G$ is a discrete group.

Use the notation of Example \ref{ex.de Rham complex}, write $\Lambda^p_{L^2}(\tilde{M})$ for the square integrable differential $p$-forms on $\tilde{M}$.
We denote by $d_{\tilde{M}}$ the operator-closure of the \emph{de Rham differential operator} on $\Lambda^p_{L^2}(\tilde{M})$.
Let
\[
S_{\tilde{M}}(\omega)=i^{p(p-1)+[\frac{n}{2}]}*\omega,\;\omega\in \Lambda^p(\tilde{M}),
\]
which is self-adjoint with $S_{\tilde{M}}^2=1$, anti-commutes with the operator $d_{\tilde{M}} + d^*_{\tilde{M}}$.
And $S_{\tilde{M}}$ a \emph{Poincar\'e duality operator} of $(\bigoplus \Lambda^p_{L^2}(\tilde{M}),d_{\tilde{M}})$.
For notation simplicity, we will use $L^2(\Lambda(\tilde{M}))$ instead of $\bigoplus \Lambda^p_{L^2}(\tilde{M})$.



\begin{definition}[{\cite[Definition 5.4]{HR2}}]
If the dimension $n$ of $\tilde{M}$ is even then the signature operator $D_{\tilde{M}}$ on $\tilde{M}$ is given by $d_{\tilde{M}} + d^*_{\tilde{M}}$, viewed as an operator graded by $S_{\tilde{M}}$.
If the dimension $n$ of $\tilde{M}$ is odd then the signature operator $D_{\tilde{M}}$ on $\tilde{M}$ is given by the self-adjoint operator $i(d_{\tilde{M}}+d^*_{\tilde{M}})S_{\tilde{M}}$, restricting to even forms.
\end{definition}

The \emph{higher index} of the signature operator $D_{\tilde{M}}$ on $\tilde{M}$, denoted by $\text{Ind}(D_{\tilde{M}})$, is equal to the signature of the Hilbert-Poincar\'e complex $(L^2(\Lambda(\tilde{M})),d_{\tilde{M}}, S_{\tilde{M}})$ (\cite[Definition 5.10, Definition 5.11]{HR1}).
More specifically:
when $n$ is even,\;$K_0(C^*(\tilde{M})^G)$ is the receptacle for the higher index of the signature operator, and the higher index can be computed as
\[
[P_+(D_{\tilde{M}}+S_{\tilde{M}})]-[P_+(D_{\tilde{M}}-S_{\tilde{M}})] \in  K_0(C^*(\tilde{M})^G)
\]
where, $P_+(D_{\tilde{M}}+S_{\tilde{M}})$ and $P_+(D_{\tilde{M}}-S_{\tilde{M}})$ denote the positive projections of $D_{\tilde{M}}+S_{\tilde{M}}$ and $D_{\tilde{M}}-S_{\tilde{M}}$ respectively; when $n$ is odd,\;$K_1(C^*(\tilde{M})^G)$ is the receptacle for the higher index of the signature operator, and the higher index can be represented by the following invertible operator
\[
(D_{\tilde{M}}+S_{\tilde{M}}) (D_{\tilde{M}}-S_{\tilde{M}})^{-1}:\Lambda^{even} \to \Lambda^{even}
\]
where, $\Lambda^{even} =\bigoplus \Lambda^{2p}_{L^2}(\tilde{M})$ denote the even forms.
\begin{remark}
Actually, for $t>0$, we have	
\[
[P_+(D_{\tilde{M}}+tS_{\tilde{M}})]-[P_+(D_{\tilde{M}}-tS_{\tilde{M}})] = [P_+(D_{\tilde{M}}+S_{\tilde{M}})]-[P_+(D_{\tilde{M}}-S_{\tilde{M}})]\in K_0(C^*(\tilde{M})^G),
\]
and
\[
[(D_{\tilde{M}}+tS_{\tilde{M}})(D_{\tilde{M}}-tS_{\tilde{M}})^{-1}] = [(D_{\tilde{M}}+S_{\tilde{M}})(D_{\tilde{M}}-S_{\tilde{M}})^{-1}]\in K_1(C^*(\tilde{M})^G).
\]
\end{remark}


\subsection{A construction of bounded cochain map}\label{ssec.HSsubmersion}
In this subsection, we will recall some constructions exposed in \cite{HS92, PS07, W2013a},
using which we will describe the homotopy invariance of higher signature in subsection \ref{subsection Homotopy invariance of higher signature}.


Let $M$, $N$ be oriented, closed smooth (complete Riemannian) manifolds of dimension $n$ and let $f:M\to N$ be a smooth map.
Generally, the pull-back map $f^* : L^2(\Lambda(N)) \to L^2(\Lambda(M))$ does not induce a bounded operator from $L^2(\Lambda(N))$ to $L^2(\Lambda(M))$.
However, one can fix this problem by Hilsum-Skandalis submersion (\cite[Page 5]{HS92}).  
In fact, let $I =(-1,1)$ be an open interval.
For $k\in 8\mathbb{N}$ big enough, there exists a smooth map $p: I^k\times M \to N$ which is a \emph{submersion} such that $p(x,0)=f(x)$.
Let $v \in \Lambda^k (I^k )$ be a real-valued $k$-form with $\int_{I^k} v = 1$.
We define the map $T_v(p): L^2(\Lambda(N)) \to L^2(\Lambda(M))$ by
\[
T_f(p)(\omega)=\int_{I^k} v \wedge p^* \omega.
\]

%

Now, let $\tilde{N}$ be a regular $G$-covering of $N$.
Let $\tilde{M}=f^*(\tilde{N})$ be the pull back covering.
Then $\tilde{M}$ is a regular $G$-covering of $M$, and $f: M \to N$ can be lifted to a $G$-equivariant map $\tilde{f}: \tilde{M} \to \tilde{N}$. Similarly we can define $T_{\tilde{f}}$.

We list several properties of $T_{\tilde{f}}$ which will be used in this paper.
\begin{enumerate}
\item The operator $T_{\tilde{f}}:L^2(\Lambda(\tilde{N})) \to L^2(\Lambda(\tilde{M}))$ is uniformly bounded and $G$-equivariant;
\item We have $T_{\tilde{f}}(\text{dom}(d_{\tilde{N}}) ) \subset  \text{dom}(d_{\tilde{M} })$ and $d_{\tilde{M}}T_{\tilde{f}}=T_{\tilde{f}} d_{\tilde{N}}$, i.e. $T_{\tilde{f}}$ induces a chain map from $(L^2(\Lambda(\tilde{N})), d_{\tilde{N}})$ to $(L^2(\Lambda(\tilde{M})), d_{\tilde{M}})$;
\item If $f: M \to N$ is an orientation-preserving homotopy equivalence, then $T_{\tilde{f}}$ induces an isomorphism from $\text{Ker}(d_{\tilde{N}}) / \text{Im}(d_{\tilde{N}})$ to $\text{Ker} (d_{\tilde{M}})/ \text{Im}(d_{\tilde{M}})$ and inverse is $T_{\tilde{g}}$, where $g:N\to M$ is a homotopy inverse of $f$;
\item Let $v\in (L^2(\Lambda(\tilde{M})), d_{\tilde{M}})$,\;$w\in (L^2(\Lambda(\tilde{N})), d_{\tilde{N}})$, define $T_{\tilde{f}}'$ by
\[
\int_{\tilde{M}} v\wedge T_{\tilde{f}} w = \int_{\tilde{N}} T_{\tilde{f}} v \wedge w,
\]
then there exists a bounded operator $\mathscr{Y}$ with finite propagation, such that
\[
1-T_{\tilde{f}}'T_{\tilde{f}}= d_{\tilde{N}} \mathscr{Y}+\mathscr{Y}d_{\tilde{N}}.
\]
The proof of this fact is essentially the same to the one of \cite[Lemma 2.4]{W2013a}.
\end{enumerate}
For the properties listed above, more details can be found for instance in \cite{HS92,W2013a}.
For notation simplicity, we will denote $T_{\tilde{f}}$ simply by $T_{f}$.


\subsection{Homotopy invariance of higher signature}\label{subsection Homotopy invariance of higher signature}
In this subsection, we sketch Higson and Roe's proof of homotopy invariance of the higher signatures for closed manifolds \cite{HR1, HR2}.

Let $X$ be a oriented, closed topological manifold of dimension $n$.
Let $\tilde{X}$ be a regular $G$-covering space of $X$.
Moreover, we have the following commutative diagram
\begin{equation}\label{main.sanjiao}
\begin{tikzcd}[column sep=small]
M  \arrow[rr, "f"] \arrow[dr,"\phi"'] & & N  \arrow[dl,"\psi"]\\
& X  &
\end{tikzcd}	
\end{equation}
where $f:M \to N$ is a smooth orientation-preserving homotopy equivalence between two oriented, closed smooth manifolds $M$ and $N$, $\phi$ and $\psi$ are continuous maps, such that $\phi=\psi\circ f$.

Let $\tilde{N}=\psi^*(\tilde{X})$ and $\tilde{M}=f^*(\tilde{N})=\phi^*(\tilde{X})$ be the pull back coverings.
Set
\[
d :=\begin{pmatrix}
d_{\tilde{M}} & 0\\
0 & d_{\tilde{N}}
\end{pmatrix}\text{~and~}S :=\begin{pmatrix}
S_{\tilde{M}} & 0\\
0 & -S_{\tilde{N}}
\end{pmatrix},
\]
which are operators acting on $L^2(\Lambda(\tilde{M})) \bigoplus L^2(\Lambda(\tilde{N}))$.
Recall Definition \ref{poincaredualityoperator}, it is easy to see that $(L^2(\Lambda(\tilde{M})) \bigoplus L^2(\Lambda(\tilde{N})),d,S)$ is a \emph{Hilbert-Poincar\'e complex}.

In the following, if $S$ is a \emph{Poincar\'e duality operator} of a complex of Hilbert spaces $(\mathcal{H},d)$ (Appendix \ref{section Poin duality oper}, \cite{HR1}), we suppress the complex $(\mathcal{H},d)$ from the notation and only say $S$ is a  \emph{Poincar\'e duality operator} of $d$.

The following lemma is an analogy of \cite[Lemma 4.36]{WXY16}.
\begin{lemma}\label{lemma homo poincare duality topic}
Use the notations as above.
Let $g:N\to M$ be a homotopy inverse of $f$, $S_{\tilde{N}}$ is chain homotopy equivalent to $T_f^*S_{\tilde{M}}T_f$ and $T_g S_{\tilde{M}} T_g^*$.
\end{lemma}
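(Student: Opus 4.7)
The plan is to reduce both claims to property (4) of subsection~\ref{ssec.HSsubmersion} by exploiting the Hodge-star identification between the Hilbert-space adjoint $T_f^{\ast}$ and the wedge-pairing transpose $T_f'$. Since $S_{\tilde{M}}$ is (up to a degree-dependent unimodular scalar) the Hodge star and since the $L^2$ inner product on forms is obtained from the wedge pairing by composing with the Hodge star, a direct computation yields the identity
\[
T_f^{\ast} \;=\; S_{\tilde{N}}\, T_f'\, S_{\tilde{M}}.
\]
Using $S_{\tilde{M}}^{2}=1$, this collapses the conjugation to
\[
T_f^{\ast} S_{\tilde{M}} T_f \;=\; S_{\tilde{N}}\, T_f' T_f .
\]

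With this identity in hand, the first claim becomes an immediate consequence of property (4). Indeed, property (4) supplies a bounded finite-propagation operator $\mathscr{Y}$ with $1-T_f' T_f = d_{\tilde{N}}\mathscr{Y}+\mathscr{Y} d_{\tilde{N}}$, so
\[
S_{\tilde{N}}-T_f^{\ast} S_{\tilde{M}} T_f \;=\; S_{\tilde{N}}(d_{\tilde{N}}\mathscr{Y}+\mathscr{Y} d_{\tilde{N}}) .
\]
I would then invoke the anticommutation $S_{\tilde{N}}(d_{\tilde{N}}+d_{\tilde{N}}^{\ast})=-(d_{\tilde{N}}+d_{\tilde{N}}^{\ast})S_{\tilde{N}}$ together with the degree shift effected by $S_{\tilde{N}}$ (which forces $S_{\tilde{N}}d_{\tilde{N}}=-d_{\tilde{N}}^{\ast}S_{\tilde{N}}$) to rewrite the right-hand side in the form $d_{\tilde{N}} h \pm h^{\ast}d_{\tilde{N}}^{\ast}$ with $h=\pm S_{\tilde{N}}\mathscr{Y}$, which is the standard form of a chain homotopy between two Poincar\'e duality operators on the complex $(L^2(\Lambda(\tilde{N})),d_{\tilde{N}})$ as recalled in Appendix~\ref{section Poin duality oper}.

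For the second statement I would apply the analogous identification $T_g^{\ast}=S_{\tilde{M}}T_g' S_{\tilde{N}}$ to obtain
\[
T_g S_{\tilde{M}} T_g^{\ast} \;=\; T_g T_g'\, S_{\tilde{N}} ,
\]
so the problem is reduced to showing that $T_g T_g'$ is chain homotopic to the identity on $L^2(\Lambda(\tilde{N}))$. Property (4), however, only directly delivers $T_f' T_f \sim 1$ on $L^2(\Lambda(\tilde{N}))$ and $T_g' T_g \sim 1$ on $L^2(\Lambda(\tilde{M}))$. The trick is to combine these with property (3): since $f,g$ are homotopy inverses, $T_g T_f$ is chain homotopic to the identity on $L^2(\Lambda(\tilde{N}))$, and postcomposing $T_g T_f \sim 1$ with the chain-equivalence $T_g' T_g \sim 1$ gives $T_f \sim T_g'$ as chain maps $L^2(\Lambda(\tilde{N}))\to L^2(\Lambda(\tilde{M}))$; consequently $T_g T_g' \sim T_g T_f \sim 1$ and the previous manipulation closes the argument.

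The principal obstacle is bookkeeping. Carrying out the Hodge-star/adjoint identification correctly on each graded piece requires tracking the degree-dependent factors $i^{p(p-1)+[n/2]}$ that define $S_{\tilde{M}}$ and $S_{\tilde{N}}$, and checking that the various chain homotopies (from property (4), from the homotopy inverse $g$, and from rearranging via $S_{\tilde{N}}d_{\tilde{N}}=-d_{\tilde{N}}^{\ast}S_{\tilde{N}}$) can all be assembled into a single bounded, finite-propagation operator so that the equivalence takes place inside the appropriate Roe-type algebra. Once these bookkeeping issues are settled, the argument is otherwise formal and parallels \cite[Lemma~4.36]{WXY16}.
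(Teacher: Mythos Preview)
Your approach is essentially the same as the paper's. The paper carries out the reduction to property~(4) via a direct wedge-pairing computation
\[
\int_{\tilde{N}} v\wedge S_{\tilde{N}} w- \int_{\tilde{N}} v\wedge T_f^{\ast} S_{\tilde{M}} T_f w
= \int_{\tilde{N}} v\wedge S_{\tilde{N}}(d_{\tilde{N}}\mathscr{Y}+\mathscr{Y}d_{\tilde{N}}) w,
\]
arriving at $S_{\tilde{N}}-T_f^{\ast} S_{\tilde{M}} T_f= -d_{\tilde{N}}^{\ast} (S_{\tilde{N}}\mathscr{Y})+ S_{\tilde{N}}\mathscr{Y}\, d_{\tilde{N}}$, whereas you package the same calculation into the operator identity $T_f^{\ast}=S_{\tilde{N}}T_f' S_{\tilde{M}}$ before invoking property~(4); the content is identical. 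One minor difference: the paper simply declares the $T_g S_{\tilde{M}} T_g^{\ast}$ case to follow by the same argument, while you actually spell out why $T_g T_g'\sim 1$ on $L^2(\Lambda(\tilde{N}))$ needs the extra step through $T_f\sim T_g'$, which is a point the paper leaves implicit.
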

\begin{proof}
It is sufficient to prove that $S_{\tilde{N}}$ is chain homotopy equivalent to $T_f^*S_{\tilde{M}}T_f$.
Let $v, w\in L^2(\Lambda(N))$, then
\begin{eqnarray*}
& & \int_{\tilde{N}}  v\wedge S_{\tilde{N}} w- \int_{\tilde{N}} v\wedge T_f^* S_{\tilde{M}} T_f w \\
&=& i^{p(p-1)+[\frac{n}{2}]}(\int_{\tilde{N}} v\wedge  w- \int_{\tilde{N}} T_f v\wedge T_f w)\\
&=& i^{p(p-1)+[\frac{n}{2}]}(\int_{\tilde{N}} v\wedge  w- \int_{\tilde{N}} v\wedge T'_f T_f w)\\
&=& i^{p(p-1)+[\frac{n}{2}]}\int_{\tilde{N}} v\wedge (d_{\tilde{N}}\mathscr{Y}+\mathscr{Y}d_{\tilde{N}} w)\\
&=& \int_{\tilde{N}} v\wedge S_{\tilde{N}}(d_{\tilde{N}}\mathscr{Y}+\mathscr{Y}d_{\tilde{N}} w	).
\end{eqnarray*}
Thus, we have $S_{\tilde{N}}-T_f^* S_{\tilde{M}} T_f= -d_{\tilde{N}}^* (S_{\tilde{N}}\mathscr{Y})+ S_{\tilde{N}}\mathscr{Y} d_{\tilde{N}}$.
\end{proof}

Set $D:=d+d^*$, we have $D \pm S$ are invertible \cite[section 5]{HR1}.
Moreover, by Lemma \ref{lemma homo poincare duality topic}, for $t \in [0,1]$, we have the path
\[
\begin{pmatrix}
S_{\tilde{M}} & 0\\
0 & -(1-t)S_{\tilde{N}}-tT_f^*S_{\tilde{M}} T_f
\end{pmatrix}
\]
are \emph{Poincar\'e duality operators} (Definition \ref{poincaredualityoperator}, \cite[Lemma 3.4]{HR1}, and \cite[Definition 3.1]{HR1}) of $d$.
Also, the path
\begin{equation}\label{eq.1}
\begin{pmatrix}
\cos( t\frac{\pi}{2})S_{\tilde{M}} & \sin(t\frac{\pi}{2}) S_{\tilde{M}} T_f\\
\sin(t\frac{\pi}{2}) T_f^*S_{\tilde{M}} & -\cos(t\frac{\pi}{2}) T_f^* S_{\tilde{M}} T_f
\end{pmatrix}	
\end{equation}
are \emph{Poincar\'e duality operators} of $d$.
Moreover, we can write the homotopy inverse of Eq.(\ref{eq.1}) as
\[
\begin{pmatrix}
\cos( t\frac{\pi}{2})S_{\tilde{M}} & \sin(t\frac{\pi}{2}) S_{\tilde{M}} T_g^*\\
\sin(t\frac{\pi}{2})T_g S_{\tilde{M}} & -\cos(t\frac{\pi}{2}) T_g S_{\tilde{M}} T_g^*
\end{pmatrix}.
\]

Note that, for $t\in [0,1]$, using the path (\emph{Poincar\'e duality operators} of $d$)
\[
\begin{pmatrix}
0& e^{it \pi}S_{\tilde{M} T_f}\\
e^{-it\pi} T_f^*S_{\tilde{M}} & 0
\end{pmatrix}
\]
we can be connected $
\begin{pmatrix}
0& S_{\tilde{M}}T_f\\
T_f^*S_{\tilde{M}} & 0
\end{pmatrix}$ to $\begin{pmatrix}
0&- S_{\tilde{M}} T_f\\
-T_f^*S_{\tilde{M}} & 0
\end{pmatrix}$.
After re-parameterize the $t$ variable, we will denote the above path by $S_t$.

For $t\in[0,1]$ and $n$ is even, we consider the operator
\begin{equation}\label{eq.3.3a}
P_+(D+ S_t)-P_+(D- S_t),	
\end{equation}
while $n$ is odd, we consider the invertible operator
\begin{equation}\label{eq.3.3b}
(D+S_t)(D-S_t)^{-1}
\end{equation}
which restricts to even forms.

The following two lemmas shows that $K$-theory of the $C^*$-algebra $C^*(\tilde{X})^G$ is the receptacle for the indices given by operators in Eq.(\ref{eq.3.3a}) and Eq.(\ref{eq.3.3b}).

\begin{lemma}[{\cite[Lemma 5.7]{HR1}}]\label{lemma odd functional cal for perturb}
Let $i=\sqrt{-1}$, for any $t$, the operators $i\pm (D\pm S_t)$ are invertible, and $(i\pm (D\pm S_t))^{-1}$ belongs to $C^*(\tilde{X})^G$.
\end{lemma}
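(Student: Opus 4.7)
The plan is to reduce the claim to two ingredients: self-adjointness of the perturbed operator, which yields the abstract invertibility, and finite propagation speed combined with a Neumann series argument, which places the resolvent inside the Roe algebra. First I would view $L^2(\Lambda(\tilde{M}))\oplus L^2(\Lambda(\tilde{N}))$ as an admissible $(\tilde{X},G)$-module by pushing the $C_0(\tilde{X})$-representation forward along $\tilde{\phi}$ and $\tilde{\psi}$, so that every operator in sight acts on a single $\tilde{X}$-module and propagation can be measured in $\tilde{X}$.

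For invertibility, $D = d + d^*$ is essentially self-adjoint on compactly supported smooth forms of the complete Riemannian manifold $\tilde{M}\sqcup\tilde{N}$, and each matrix in the family $\{S_t\}$ is bounded and self-adjoint by inspection (the diagonal blocks are built from the self-adjoint Hodge duality operators, and the off-diagonal blocks appear with their conjugate transposes). Hence $D \pm S_t$ is self-adjoint on $\mathrm{dom}(D)$, its spectrum is real, and $(i \pm (D \pm S_t))^{-1}$ exists as a bounded operator of norm at most $1$.

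For membership in $C^*(\tilde{X})^G$, I would first invoke finite propagation speed for $D$: the wave group $e^{isD}$ is $G$-equivariant with propagation at most $|s|$, so Fourier inversion applied to Schwartz functions with compactly supported Fourier transform produces $f(D) \in \mathbb{C}[\tilde{X}]^G$, and by norm density $g(D) \in C^*(\tilde{X})^G$ for every $g \in C_0(\mathbb{R})$; in particular $(\lambda i - D)^{-1} \in C^*(\tilde{X})^G$ for all $\lambda > 0$. Next, $S_t$ is bounded with finite propagation that is uniform in $t$ (the Hodge operators have zero propagation, while the block $S_{\tilde{M}}T_f$ has propagation controlled by the diameter of the submersion parameter space $I^k$, transported along $\tilde{\phi}$ and $\tilde{\psi}$). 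As a $G$-equivariant, finite-propagation, bounded operator, $S_t$ multiplies $C^*(\tilde{X})^G$ into itself from either side: for any locally compact, finite-propagation $a$, both $S_t a$ and $a S_t$ remain $G$-equivariant, locally compact, and of finite propagation, and the general case follows by norm density.

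Choosing $\lambda > 0$ large enough that $\|(\lambda i - D)^{-1}\|\,\|S_t\| < 1$, the Neumann series
\[
(\lambda i - (D \pm S_t))^{-1} \;=\; \sum_{n \geq 0} (\lambda i - D)^{-1}\bigl(\pm S_t (\lambda i - D)^{-1}\bigr)^n
\]
converges in norm and term-by-term lies in $C^*(\tilde{X})^G$, so the sum does too. Since $D \pm S_t$ is self-adjoint, Stone--Weierstrass ensures that $\{1/(\lambda i - x),\,1/(-\lambda i - x)\}$ generates $C_0(\mathbb{R})$ as a $C^*$-algebra, so the continuous functional calculus of $D\pm S_t$ entirely lands in $C^*(\tilde{X})^G$, yielding $(i \pm (D \pm S_t))^{-1} \in C^*(\tilde{X})^G$ as desired. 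The main technical obstacle is establishing the multiplier property of $S_t$ on the $\tilde{X}$-module uniformly in $t \in [0,1]$, which rests on the explicit finite-propagation bound for the Hilsum--Skandalis transfer $T_f$; once this is secured, the argument is a direct adaptation of \cite[Lemma 5.7]{HR1} to the unbounded setting.
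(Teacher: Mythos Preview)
Your argument is correct and complete, but it takes a different route from the paper's proof. Both start the same way: $D\pm S_t$ is self-adjoint, hence $i\pm(D\pm S_t)$ is invertible in $B(H)$. For membership in $C^*(\tilde{X})^G$, however, the paper works directly at the point $\lambda=1$ via the factorization
\[
D+S_t+i \;=\; \bigl(1+S_t(D+i)^{-1}\bigr)(D+i),
\]
so that $\phi(D+S_t+i)^{-1}=\phi(D+i)^{-1}\bigl(1+S_t(D+i)^{-1}\bigr)^{-1}$ is compact because $\phi(D+i)^{-1}$ is, and the second factor lies in the unitization of $C^*(\tilde{X})^G$ (its inverse stays there by spectral permanence for $C^*$-subalgebras). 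Your approach instead goes to large $\lambda$ so that the Neumann series for $(\lambda i-(D\pm S_t))^{-1}$ converges termwise in $C^*(\tilde{X})^G$, and then descends to $\lambda=1$ by Stone--Weierstrass on $C_0(\mathbb{R})$. The paper's route is shorter and stays at the target parameter, but leans on the (implicit) inverse-closedness of $C^*$-subalgebras; your route is slightly longer but makes the finite-propagation approximation completely explicit through the convergent series, which is arguably cleaner pedagogically.
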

\begin{prf}
Since $i\pm (D\pm S_t)$ is a self-adjoint operator and self-adjoint operator plus or minus $i$ are always invertible.

Hence, we have
\begin{eqnarray*}
(D+S_t+i)&=&(S_t(D+i)^{-1}+1)(D+i)\\
&=&	(D+i)((D+i)^{-1}S_t+1).
\end{eqnarray*}
Moreover, for any compact support $\phi$, we have
\begin{eqnarray*}
\phi (D+S_t+i)^{-1}&=& \phi ((S_t(D+i)^{-1}+1)(D+i))^{-1}\\
&=&\phi (D+i)^{-1}(S_t(D+i)^{-1}+1)^{-1},\\
(D+S_t+i)^{-1}\phi &=& ((S_t(D+i)^{-1}+1)(D+i))^{-1}\phi\\
&=& (S_t(D+i)^{-1}+1)^{-1}(D+i)^{-1}\phi.
\end{eqnarray*}
Operators $(S_t(D+i)^{-1}+1)$ are certainly bounded, and $\phi (D+i)^{-1},(D+i)^{-1}\phi $ are always compact.
Thus, $(i\pm (D\pm S_t))^{-1}$ is locally compact.

Without loss of generality, we suppose $T_f$ have finite propagation, thus $(S_t(D+i)^{-1}+1)$ can be approximated by finite propagation operators, and this is also true for the operator $(D+S_t+i)^{-1}= ((S_t(D+i)^{-1}+1)(D+i))^{-1}$.
\end{prf}

\begin{lemma}[{\cite[Lemma 5.8]{HR1}}]\label{lemma even functional cal for perturb}
For any bounded function $g:(-\infty, \infty)\to \mathbb{R}$, we have $g(D\pm S_t)-g(D)$ belongs to $C^*(\tilde{X})^G$.
\end{lemma}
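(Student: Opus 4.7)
The plan is to reduce to the resolvent case already handled in Lemma~\ref{lemma odd functional cal for perturb} and then bootstrap by functional calculus. First, the same argument as in the proof of Lemma~\ref{lemma odd functional cal for perturb}, specialized to $S_t=0$, shows that $(D\pm i)^{-1}\in C^*(\tilde X)^G$ as well. Setting $h_\pm(x):=(x\pm i)^{-1}$, the second resolvent identity
\[
h_\pm(D+S_t)-h_\pm(D) \;=\; -\,h_\pm(D+S_t)\, S_t\, h_\pm(D),
\]
together with the boundedness, $G$-equivariance, and (approximate) finite propagation of $S_t$ supplied by the Hilsum--Skandalis construction of subsection~\ref{ssec.HSsubmersion}, exhibits this difference explicitly as an element of $C^*(\tilde X)^G$. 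The same identity with $S_t$ replaced by $-S_t$ handles the other sign.

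Next I would promote this from the generators $h_\pm$ to all of $C_0(\mathbb R)$ by a Stone--Weierstrass argument: the unital $*$-algebra generated by $h_+$ and $h_-$ is dense in $C(\mathbb R\cup\{\infty\})$, so every $g\in C_0(\mathbb R)$ is a uniform limit of polynomials $p_n$ in $h_\pm$ without constant term. A standard telescoping
\[
p_n(D\pm S_t)-p_n(D) \;=\; \sum_{j} A_j^{(n)}\bigl(h_{\epsilon_j}(D\pm S_t)-h_{\epsilon_j}(D)\bigr)B_j^{(n)},
\]
with $A_j^{(n)}, B_j^{(n)}$ bounded products of $h_\pm$'s evaluated at $D\pm S_t$ and $D$, together with the closedness of $C^*(\tilde X)^G$ under two-sided multiplication by the bounded operators $h_\pm(D\pm S_t)$ and $h_\pm(D)$ (each of which belongs to $C^*(\tilde X)^G$, hence acts as a multiplier on it), shows that each $p_n(D\pm S_t)-p_n(D)$ lies in $C^*(\tilde X)^G$. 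Passing to the norm limit handles all of $C_0(\mathbb R)$. For a general bounded function in the sense actually used for Eq.~(\ref{eq.3.3a})--(\ref{eq.3.3b}) (i.e.\ bounded Borel with limits at $\pm\infty$, such as chopping or sign functions), decompose $g=g_0+a+b\chi$ with $g_0\in C_0(\mathbb R)$, $\chi$ a fixed smooth odd function with $\chi(\pm\infty)=\pm 1$, and $a,b$ constants; the constant part contributes $aI-aI=0$, and the $\chi$-term, after subtracting its values at infinity, falls back into the $C_0(\mathbb R)$ case.

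The main obstacle I anticipate is uniform norm control in the Stone--Weierstrass step: since $D$ is unbounded, the polynomials $p_n$ may have growing degree, but the uniform bounds $\|h_\pm(D\pm S_t)\|\leq 1$ and $\|h_\pm(D)\|\leq 1$ keep the telescoped sum controlled linearly in $\deg p_n$ times $\|h_\pm(D\pm S_t)-h_\pm(D)\|$; if this crude bound proves inadequate, a Cauchy-integral representation $g(D)=\tfrac{1}{2\pi i}\oint \tilde g(z)(z-D)^{-1}\,dz$ via an almost-analytic extension $\tilde g$ lets one rewrite $g(D\pm S_t)-g(D)$ directly as a norm-convergent integral of resolvent differences, each of which lies in $C^*(\tilde X)^G$ by the first step. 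A secondary technical point, already encountered in Lemma~\ref{lemma odd functional cal for perturb}, is that $S_t$ involves $T_f$ and $T_g$ and thus need not have \emph{exact} finite propagation; this is circumvented by approximating $T_f,T_g$ by finite-propagation $G$-equivariant operators and passing to the closure of $\mathbb C[\tilde X]^G$.
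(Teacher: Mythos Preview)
Your approach is correct and shares the same foundation as the paper's proof---both reduce to resolvent differences, which lie in $C^*(\tilde X)^G$ by the second resolvent identity and Lemma~\ref{lemma odd functional cal for perturb}. The route diverges from there. The paper does not bootstrap via Stone--Weierstrass; instead it reduces immediately to the single chopping function $g(x)=x/\sqrt{1+x^2}$ (which is all that is needed for the positive projections in Eq.~(\ref{eq.3.3a})) and invokes the explicit integral representation
\[
\frac{x}{\sqrt{1+x^2}}=\frac{1}{\pi}\int_1^\infty \frac{s}{\sqrt{s^2-1}}\,x(x^2+s^2)^{-1}\,ds,
\]
which after the resolvent identity writes $g(D)-g(D+S_t)$ directly as a norm-convergent integral of terms $(D\pm is)^{-1}S_t(D+S_t\pm is)^{-1}\in C^*(\tilde X)^G$. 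In other words, the paper takes exactly what you list as your fallback option (an integral/Cauchy-type representation), skipping the Stone--Weierstrass detour and its attendant norm-control worry. Your primary route is more general-purpose and would work for any $g\in C_0(\mathbb R)$ without a special identity, at the cost of the telescoping bookkeeping you flag; the paper's is shorter because it targets only the one function actually used downstream.
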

\begin{prf}
It suffices to prove the lemma for $g(x) = \frac{x}{\sqrt{1+x^2}}$.
Using the integral
\[
\frac{x}{\sqrt{1+x^2}}=\frac{1}{\pi}\int_{1}^{\infty} \frac{s}{\sqrt{s^2-1}} x (x^2+s^2)^{-1}ds.
\]
We have
\begin{align*}
&g(D)-g(D+S_t)\\
=& \frac{1}{\pi}\int_1^\infty \frac{s}{\sqrt{s^2-1}}\left((D+is)^{-1}S_t (D+S_t+is)^{-1}\right)ds\\
 &+ \frac{1}{\pi}\int_1^\infty \frac{s}{\sqrt{s^2-1}}\left((D-is)^{-1}S_t(D+S_t-is)^{-1})\right)ds.
\end{align*}
\end{prf}

In summary, in the $K$-theory of the $C^*$-algebra $C^*(\tilde{X})^G$, we have an explicit path connecting
\[
[P_+(D+ S)]-[P_+(D- S)]	
\]
and
\[
[(D + S)(D-S)^{-1}]
\] 
to the trivial element respectively.


\bigskip
Now, we study the case that $X$, $M$, and $N$ are not compact.
Let us first recall the definition of \emph{Controlled homotopy equivalence}.

\begin{definition}\label{definition ana control homo}
Let $f: M\to N$ be a smooth homotopy equivalence. Fix a metric on $X$ that agrees with the topology of $X$.
Suppose $g: N\to M$ is a homotopy inverse of $f$.
Let $\{h_s \}_{0 \leq s\leq 1}$ be a homotopy between $f \circ g$ and $Id:N \to  N$.
Similarly, let $\{h'_s \}_{ 0 \leq s\leq 1}$ be a homotopy between $ g \circ f$ and $Id:M\to M$.
 We say $f$ is controlled over $X$ if there exist proper continuous maps $\phi: M \to X$ and $\psi: N\to X$ such that 
\begin{enumerate}
\item The following diagram commutes
\begin{equation*}
\begin{tikzcd}[column sep=small]
M  \arrow[rr, "f"] \arrow[dr,"\phi"'] & & N  \arrow[dl,"\psi"]\\
& X&
\end{tikzcd}
\end{equation*}
\item The diameter of the set $\psi h(a)=\{\psi( h_s(a))~\mid~  0\leq s\leq 1 \}$ is bounded by a constant, say $C$, uniformly for all $a\in N$;
\item The diameter of the set $\phi h'(b)=\{\phi( h'_s(b))~\mid~  0\leq s\leq 1  \}$ is bounded by a constant, say $C$, uniformly for all $b\in M$.
\end{enumerate}
\end{definition}

\begin{remark}
Under the circumstances of Eq.(\ref{main.sanjiao}).
If we further suppose $G=\pi_1X$ is the fundamental group of $X$ and $\tilde{X}$ is the universal cover of $X$.
Let $\tilde{N}=\psi^*(\tilde{X})$ and $\tilde{M}=f^*(\tilde{N})=\phi^*(\tilde{X})$ be the pull back coverings.
Then the lifting map $\tilde{f}:\tilde{M}\to \tilde{N}$ is automatically \emph{controlled homotopy equivalence} over $\tilde{X}$.
\end{remark}
With essentially the same argument as the compact case, we have.
\begin{proposition}\label{proposition Ana con hom kill sig}
With the notations as above.
If $f: M\to N$ is an \emph{controlled homotopy equivalence} over $X$.
Then
\[
\phi_*(\text{Ind}(D_M))=\psi_*(\text{Ind}(D_N)) \in K_*(C^*(X)),
\]
where, $D_M$ and $D_N$ denote the signature operator on $M$ and $N$ respectively.
\end{proposition}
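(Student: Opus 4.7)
The plan is to transcribe the compact-case argument just sketched for diagram Eq.(\ref{main.sanjiao}) into the non-compact setting, with the controlled-homotopy hypothesis of Definition \ref{definition ana control homo} playing the role needed to guarantee that every operator produced has finite propagation once $L^2(\Lambda(M))$ and $L^2(\Lambda(N))$ are regarded as $X$-modules via the proper maps $\phi$ and $\psi$. Concretely, I form the Hilbert--Poincar\'e complex $(L^2(\Lambda(M))\oplus L^2(\Lambda(N)),d,S)$ with
\[
d=\begin{pmatrix} d_M & 0 \\ 0 & d_N \end{pmatrix},\qquad S=\begin{pmatrix} S_M & 0 \\ 0 & -S_N \end{pmatrix}
\]
on the direct-sum $X$-module, whose signature index equals $\phi_*\text{Ind}(D_M)-\psi_*\text{Ind}(D_N)\in K_*(C^*(X))$. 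The objective is to show this index vanishes.

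The first technical step is to build a Hilsum--Skandalis cochain map $T_f:L^2(\Lambda(N))\to L^2(\Lambda(M))$ that has finite propagation as an operator between the $X$-modules. Because $\phi=\psi\circ f$, if I choose the submersion $p:I^k\times M\to N$ so that the $\psi$-diameter of each fibre $\{p(t,m):t\in I^k\}$ is uniformly bounded, then the Schwartz kernel of $T_f$ sits within a uniform $X$-neighbourhood of the diagonal. The controlled hypothesis on $f$ provides such a $p$, and a parallel construction yields $T_g$ for a homotopy inverse $g$. The argument of Lemma \ref{lemma homo poincare duality topic} then produces a bounded chain homotopy $\mathscr{Y}$ satisfying $1-T_f'T_f=d_N\mathscr{Y}+\mathscr{Y}d_N$; its propagation over $X$ is controlled by the uniform $X$-diameter of the homotopy tracks appearing in Definition \ref{definition ana control homo}. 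Consequently $S_N$ and $T_f^*S_MT_f$ are chain homotopic via a finite-propagation homotopy, and the family Eq.(\ref{eq.1}) together with its subsequent $e^{it\pi}$ rotation forms a continuous path of Poincar\'e duality operators of $d$ with uniformly bounded $X$-propagation.

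With the finite-propagation property secured, I invoke the non-equivariant counterparts of Lemma \ref{lemma odd functional cal for perturb} and Lemma \ref{lemma even functional cal for perturb} (with $C^*(\tilde{X})^G$ replaced by $C^*(X)$) to conclude that either $[P_+(D+S_t)]-[P_+(D-S_t)]$ (when $n$ is even) or $[(D+S_t)(D-S_t)^{-1}]$ (when $n$ is odd) defines a continuous path in $K_*(C^*(X))$. At $t=0$ this class equals $\phi_*\text{Ind}(D_M)-\psi_*\text{Ind}(D_N)$, whereas at the endpoint the matrix has been deformed into an anti-diagonal form that represents the trivial class. Constancy along the path yields the stated equality.

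The main obstacle I expect is the propagation bookkeeping: every ingredient---the submersion $p$, the transpose $T_f'$, the chain homotopy $\mathscr{Y}$, and each convex combination appearing along $S_t$---must be verified to have finite propagation after pushforward along $\phi,\psi$ to $X$. All of these ultimately rest on the uniform $X$-diameter bound for the tracks $\psi h(a)$ and $\phi h'(b)$ supplied by Definition \ref{definition ana control homo}, so no new geometric input is required beyond the controlled hypothesis, but the verifications must be threaded carefully through the Hilsum--Skandalis construction rather than treated as routine.
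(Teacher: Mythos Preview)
Your proposal is correct and follows exactly the approach the paper indicates: the paper states only ``With essentially the same argument as the compact case, we have'' before Proposition~\ref{proposition Ana con hom kill sig}, and your plan faithfully transcribes that compact-case argument (the path $S_t$ of Poincar\'e duality operators together with Lemmas~\ref{lemma odd functional cal for perturb} and~\ref{lemma even functional cal for perturb}) while correctly identifying that the controlled-homotopy hypothesis supplies the finite-propagation bounds needed for all operators to lie in $C^*(X)$.
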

Proposition \ref{proposition Ana con hom kill sig} has also been obtained by Kaminker-Miller \cite{KM85}, Kasparov \cite{K95}, Hilsum-Skandalis \cite{HS92}. 
However, we emphasize that the proof given in the current section follows from \cite{HR1} and \cite{HR2}.

\section{Index map for $L$-group}
\label{subsection Define for element of L}
In this section, using the idea of relative index, we explore the index map for $L$-group.
For each element in $L_n(\pi_1 X)$ (Appendix \ref{section WXY surgery sequence} or \cite[Definition 3.9]{WXY16} for details) we associate to it a $K$-theory class in $K_n(C^*(\tilde{X})^G)$.
We will show that this map is a well defined group homomorphism.

\subsection{Relative index}\label{subsection M infty and N infty}

The purpose of this subsection is to define the index of an element in $L_n(\pi_1 X)$.
We would like to introduce Lemma \ref{lemma almost invertible cite}, following \cite{HS92}, and Lemma \ref{lemma almost invertible} first. They will play central role in our construction.

\begin{lemma}[\cite{HS92}, Lemma 2.2]\label{lemma almost invertible cite}
Let $\epsilon$ and $K$ be two real numbers such that $4\epsilon K^2< 1$, $\nabla $ is a closed unbounded adjointable operator on Hilbert space $\mathcal{H}$ such that $\text{ran}\nabla  \subset \text{dom} \nabla$ and $\|\nabla^2\|\leq \epsilon^2$.
Moreover, there exists $x$, $y$ in $B(\mathcal{H})$ such that $x\nabla +\nabla  x=y$, where $y$ is invertible, $x \text{dom}\nabla  \subset \text{dom}\nabla $, $\|x\|\leq K$ and $\|y^{-1}\|\leq K$, then $\nabla +\nabla^*$ is invertible.
\end{lemma}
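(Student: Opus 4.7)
My plan is to establish the uniform lower bound $\|Df\| \geq c\,\|f\|$ for some constant $c > 0$ and every $f$ in a core of the self-adjoint operator $D := \nabla + \nabla^*$; this estimate is equivalent to invertibility of $D$.

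The guiding idea will be to treat $T := y^{-1} x$ (bounded, with $\|T\| \leq K^2$) as an approximate contracting chain homotopy for $\nabla$. First I would verify, using $\mathrm{ran}\,\nabla \subset \mathrm{dom}\,\nabla$ and $\|\nabla^2\| \leq \epsilon^2$, the commutator identity
\[
\nabla y - y\nabla \;=\; \nabla(x\nabla+\nabla x) - (x\nabla+\nabla x)\nabla \;=\; \nabla^2 x - x\nabla^2,
\]
giving $\|[\nabla, y]\| \leq 2K\epsilon^2$. From the standard formula $[\nabla, y^{-1}] = -y^{-1}[\nabla, y]\,y^{-1}$ I would then deduce $\|[\nabla, y^{-1}]\| \leq 2K^3\epsilon^2$ and the key identity
\[
T\nabla + \nabla T \;=\; y^{-1}(x\nabla + \nabla x) + [\nabla, y^{-1}]\,x \;=\; 1 + E,
\]
where $E := [\nabla, y^{-1}]\,x$ has norm at most $2K^4 \epsilon^2 < \tfrac{1}{8}$ thanks to the hypothesis $4\epsilon K^2 < 1$.

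Next, for $f$ in a core of $D$, setting $g := Df$ so that $\nabla^* f = g - \nabla f$, I would pair $T\nabla + \nabla T = 1 + E$ with $f$, move the adjoints of $T$ and $\nabla^*$ across, and take real parts to arrive at
\[
\|f\|^2 \;=\; \mathrm{Re}\,\langle (T^* - T)f,\, \nabla f\rangle + \mathrm{Re}\,\langle Tf,\, g\rangle - \mathrm{Re}\,\langle Ef,\, f\rangle.
\]
Together with the elementary a priori bound $\|\nabla f\| \leq \|g\| + \sqrt{2}\,\epsilon\,\|f\|$, obtained by expanding $\|Df\|^2 = \|\nabla f\|^2 + \|\nabla^* f\|^2 + 2\,\mathrm{Re}\,\langle \nabla^2 f, f\rangle$, the estimates $\|T\| \leq K^2$ and $\|E\| \leq 2K^4\epsilon^2$ will produce
\[
\bigl(1 - 2\sqrt{2}\,K^2\epsilon - 2K^4\epsilon^2\bigr)\|f\|^2 \;\leq\; 3K^2\,\|f\|\,\|g\|,
\]
and $4\epsilon K^2 < 1$ makes the coefficient on the left strictly positive, yielding $\|f\| \leq C\|g\|$ for a constant $C = C(K,\epsilon)$.

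The hard part will be neither the algebra nor the estimates but the operator-theoretic bookkeeping: $y^{-1}$ need not preserve $\mathrm{dom}\,\nabla$ a priori, so the expression $[\nabla, y^{-1}]$ and the operator identity $T\nabla + \nabla T = 1 + E$ require justification. To handle this I would first show that $y$ restricts to a bijection of $\mathrm{dom}\,\nabla$ onto itself, using $\|y^{-1}\| \leq K$ together with the boundedness of $\nabla^2 x - x\nabla^2$ to control $\nabla y^{-1} f$ for $f \in \mathrm{dom}\,\nabla$; only then can the formal identities above be promoted to genuine operator identities on a common dense core, making every pairing above legitimate.
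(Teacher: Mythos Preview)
The paper does not supply its own proof of this lemma; it is quoted verbatim from Hilsum--Skandalis \cite{HS92} and used as a black box to derive Lemma~\ref{lemma almost invertible}. So there is no in-paper argument to compare against.

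Your approach is correct and is essentially the Hilsum--Skandalis idea carried out explicitly: the operator $T=y^{-1}x$ is an approximate contracting homotopy for $\nabla$, and the identity $T\nabla+\nabla T=1+E$ with $\|E\|$ small forces the Laplacian-type lower bound. Your algebra checks out: $[\nabla,y]=\nabla^2x-x\nabla^2$ gives $\|E\|\le 2K^4\epsilon^2$, and with $K^2\epsilon<\tfrac14$ one has $2\sqrt{2}K^2\epsilon+2K^4\epsilon^2<\tfrac{\sqrt{2}}{2}+\tfrac18<1$, so the coefficient on the left of your final inequality is indeed positive.

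Two points worth tightening. First, the lower bound $\|Df\|\ge c\|f\|$ on a core gives injectivity and closed range; to conclude surjectivity you are implicitly using that $D=\nabla+\nabla^*$ is self-adjoint (so that $\mathrm{ran}\,D^\perp=\ker D^*=\ker D=0$). In the applications of this paper $\nabla$ is the de Rham differential on a complete manifold and self-adjointness of $D$ is standard, but you should state it as a hypothesis or justify it from regularity of $\nabla$. Second, you correctly flag the domain issue for $y^{-1}$; your proposed fix (showing $y$ is a bijection of $\mathrm{dom}\,\nabla$ via boundedness of $[\nabla,y]$) is the right move and goes through without difficulty.
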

With Lemma \ref{lemma almost invertible cite} in hand, we can prove the following lemma.
\begin{lemma}\label{lemma almost invertible}
For $i=1,2$, let $d_i$ be close unbounded operator on Hilbert space $\mathcal{H}_i$.
If there exist bounded operators $F$, $G$, $y$, $z$, and numbers $\epsilon\geq 0$, $K\geq 1$ satisfies the following properties:
\begin{enumerate}
\item The norm of $F$, $G$, $y$ and $z$ are all less than $K$;
\item For $i=1,2$. $\text{ran~}d_i  \subset \text{dom~} d_i$ and $\|d_i^2\|\leq \epsilon^2$;
\item $F(\text{dom}~d_2)\subset \text{dom}~d_1$, $G(\text{dom}~d_1)\subset \text{dom}~d_2$, and $y (\text{dom}~d_1)\subset {dom}~d_1$, $z \text{dom}~d_2\subset {dom}~d_2$;

\item $\|d_1F-Fd_2\|\leq \epsilon^2$, and $\|d_2G-Gd_1\|\leq \epsilon^2$;
\item $\|1-FG-d_1 y -y d_1 \|\leq \epsilon^2$, and $\|1-GF-d_2 z -z d_2 \|\leq \epsilon^2$.
\end{enumerate}

Then 	
\[
\left(
\begin{matrix}
d_1+d_1^* & \alpha F\\
 \alpha F^*   & -d_2 -d_2^*
\end{matrix}
\right)
\]
is invertible, when $ \alpha \leq \frac{1}{2}K^{-2}(1+K)^{-2}$ and $\epsilon \leq \frac{1}{2}\sqrt{\frac{ \alpha}{2 \alpha+1}} $.
\end{lemma}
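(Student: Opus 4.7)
The plan is to realize the given block operator as $\nabla+\nabla^{*}$ for a suitably chosen closed unbounded operator $\nabla$ on $\mathcal{H}_1\oplus\mathcal{H}_2$ and then invoke Lemma \ref{lemma almost invertible cite}. The natural candidate, modelled on the mapping cone of the almost chain map $\alpha F$, is
$$\nabla:=\begin{pmatrix} d_1 & \alpha F \\ 0 & -d_2 \end{pmatrix}$$
with domain $\text{dom}(d_1)\oplus\text{dom}(d_2)$. This $\nabla$ is closed and adjointable as a bounded off-diagonal perturbation of the closed diagonal operator $\text{diag}(d_1,-d_2)$; hypotheses (2) and (3) together ensure $\text{ran}(\nabla)\subset\text{dom}(\nabla)$; and a direct computation gives $\nabla+\nabla^{*}$ equal to the block matrix in the statement. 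It therefore suffices to verify the hypotheses of Lemma \ref{lemma almost invertible cite} for this $\nabla$.

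For the smallness of $\|\nabla^2\|$ I would simply expand
$$\nabla^2=\begin{pmatrix} d_1^2 & \alpha(d_1F-Fd_2) \\ 0 & d_2^2 \end{pmatrix},$$
so that hypotheses (2) and (4) yield $\|\nabla^2\|\leq c_1\epsilon^2$ with an absolute constant $c_1$. For the ``chain contraction'' data required in that lemma I would take
$$x:=\begin{pmatrix} \alpha y & 0 \\ G & -\alpha z \end{pmatrix},$$
which is the standard mapping-cone contraction built from the chain equivalence data $G,y,z$, rescaled by $\alpha$ so that the output is close to $\alpha I$. Expanding and collecting terms using hypotheses (3), (4), (5) yields
$$x\nabla+\nabla x=\alpha I+R,\qquad R=\begin{pmatrix} -\alpha E_1' & \alpha^2(yF-Fz) \\ Gd_1-d_2G & -\alpha E_2' \end{pmatrix},$$
where $\|E_i'\|\leq\epsilon^2$ by hypothesis (5). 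The four entries of $R$ therefore have norm bounded respectively by $\alpha\epsilon^2$, $2\alpha^2 K^2$, $\epsilon^2$, and $\alpha\epsilon^2$. The numerical assumptions $\alpha\leq\tfrac12 K^{-2}(1+K)^{-2}$ and $\epsilon\leq\tfrac12\sqrt{\alpha/(2\alpha+1)}$ are then used to force $\|R\|<\alpha$, so that $y_0:=x\nabla+\nabla x=\alpha I+R$ is invertible with $\|y_0^{-1}\|$ of order $1/\alpha$, while $\|x\|$ is controlled by a bounded multiple of $K$.

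Setting $K':=\max(\|x\|,\|y_0^{-1}\|)$ and $\epsilon':=\|\nabla^2\|^{1/2}$, the final step is to verify $4\epsilon'K'^{2}<1$ from the same numerical constraints, after which Lemma \ref{lemma almost invertible cite} delivers invertibility of $\nabla+\nabla^{*}$. The structural choices of $\nabla$ as a mapping-cone differential and of $x$ as its mapping-cone contraction are essentially forced by the form of Lemma \ref{lemma almost invertible cite}; the main obstacle I anticipate is the careful bookkeeping of constants in the last step. One must simultaneously keep the $\alpha^2 K^2$ entry and the $\epsilon^2$ entry of $R$ strictly below $\alpha$ (to invert $\alpha I+R$) and keep the final product $\epsilon'K'^{2}$ strictly less than $\tfrac14$ even though $K'$ inflates to order $1/\alpha$. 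The precise pair of inequalities in the hypothesis is exactly what reconciles these competing scales.
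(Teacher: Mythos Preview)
Your approach is essentially identical to the paper's: the same mapping-cone differential $\nabla=\begin{pmatrix} d_1 & \alpha F\\ 0 & -d_2\end{pmatrix}$ and the same reduction to Lemma~\ref{lemma almost invertible cite}. The only difference is cosmetic: the paper takes the contraction $x=\begin{pmatrix} y & 0\\ \alpha^{-1}G & -z\end{pmatrix}$, which is exactly $\alpha^{-1}$ times your choice, so that $x\nabla+\nabla x$ is close to $I$ rather than to $\alpha I$; the resulting estimates differ only by this overall scaling and lead to the same invocation of Lemma~\ref{lemma almost invertible cite}.
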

\begin{prf}
For 	
\[
\left(
\begin{matrix}
d_1 &  \alpha F\\
0  & -d_2	
\end{matrix}
\right),~\text{~consider~}~
\left(
\begin{matrix}
y & 0\\
 \alpha^{-1} G & -z	
\end{matrix}
\right).
\]
Then we have
\begin{align*}
& \begin{pmatrix}
y & 0\\
 \alpha^{-1} G & -z	
\end{pmatrix}
\begin{pmatrix}
d_1 &  \alpha  F\\
0  & -d_2
\end{pmatrix}+\begin{pmatrix}
d_1 &  \alpha F\\
0  & -d_2
\end{pmatrix}
\begin{pmatrix}
y & 0\\
 \alpha^{-1} G & -z	
\end{pmatrix}\\
=& \begin{pmatrix}
yd_1 + d_1 y + FG &  \alpha (yF-Fz) \\
 \alpha^{-1} (Gd_1-d_2 G) & GF+z d_2 + d_2z 	
\end{pmatrix}.	
\end{align*}

If $ \alpha \leq \frac{1}{2}K^{-2}(1+K)^{-2}$ and $\epsilon \leq \frac{1}{2}\sqrt{\frac{ \alpha}{2 \alpha+1}} $, we have the following estimate
\begin{align*}
& \left\|\begin{pmatrix}
1 & 0\\
0 & 1	
\end{pmatrix}- \begin{pmatrix}
yd_1 + d_1 y + FG &  \alpha (yF-Fz) \\
 \alpha^{-1} (Gd_1-d_2 G) & GF+z d_2 + d_2z 	
\end{pmatrix}\right\|\\
=& \left\|\begin{pmatrix}
1-yd_1 - d_1 y - FG & -  \alpha (yF-Fz) \\
-  \alpha^{-1} (Gd_1-d_2 G) & 1- GF- z d_2 - d_2z 	
\end{pmatrix}\right\| \leq \frac{1}{2}.	
\end{align*}
Thus	
\[
\begin{pmatrix}
y & 0\\
 \alpha^{-1} G & -z	
\end{pmatrix}\begin{pmatrix}
d_1 &  \alpha  F\\
0  & -d_2
\end{pmatrix}+\begin{pmatrix}
d_1 &  \alpha F\\
0  & -d_2
\end{pmatrix}\begin{pmatrix}
y & 0\\
 \alpha^{-1} G & -z	
\end{pmatrix}
\]
is invertible, and		
\[
\left\|\left( \left(
\begin{matrix}
y & 0\\
 \alpha^{-1} G & -z	
\end{matrix}
\right)\left(
\begin{matrix}
d_1 &  \alpha  F\\
0  & -d_2
\end{matrix}
\right)+
\left(
\begin{matrix}
d_1 &  \alpha F\\
0  & -d_2
\end{matrix}
\right)\left(
\begin{matrix}
y & 0\\
 \alpha^{-1} G & -z	
\end{matrix}
\right)\right)^{-1}\right\|<2.
\]

Furthermore, we have	
\[	
\left\|\left(
\begin{matrix}
y & 0\\
 \alpha^{-1} G & -z	
\end{matrix}
\right)\right\|\\
\leq 2K+2K^3(1+K)^2.
\]

Then by Lemma \ref{lemma almost invertible cite}, we see that
\[
\left(
\begin{matrix}
d_1+d_1^* &  \alpha F\\
 \alpha F^*   & -d_2-d_2^*
\end{matrix}
\right)
\]
is invertible.	
\end{prf}

\medskip

Before continuing, we introduce some notations which will be used in the rest of this section in the following remark.

\begin{remark}[Notation]\label{re.notation}
Suppose $\theta=(M,\partial M,\phi, N, \partial N,\psi, f)\in L_n(\pi_1 X)$.
Let $\tilde{X}$ be the universal cover of $X$, then $\tilde{X}$ is a regular $G$-covering of $X$ where $G=\pi_1 X$ is the fundamental group of $X$.
Let $M_\infty$ and $N_\infty$ be the manifolds with cylindrical ends associated to $M$ and $N$, i.e., $M_\infty = M \sqcup_{\partial M} \partial M \times [1,\infty)$ and $N_\infty=N \sqcup_{\partial N} \partial N \times [1,\infty)$.
Denote by $\tilde{M}_\infty$ (resp. $\tilde{N}_\infty $) the corresponding $G$-covering of $\Phi :M_\infty\to X\times [1,\infty)$ (resp. $\Psi :N_\infty\to X\times [1,\infty)$).
\end{remark}

Here we give briefly our motivation behind the whole discussion in the rest of this subsection.

\begin{remark}[Motivation]
In the setting of Hilbert-Poincar{\'e} complexes, as we explained in Section \ref{homotopy invariance of higher signature} and \cite{HR1}, a natural way to define the higher index of an element in $L_n(\pi_1 X)$ is by
\[
(D+ S)(D-S)^{-1}
\]
when $n$ is odd.
Unfortunately, this element generally does not represent a class in  $K_1(C^*(\tilde{X})^G)$.
Instead, we will construct ``almost Poincar\'e duality operators" $S_f$ and $S_f'$ (the precise definition of $S_f$ and $S_f'$ will be given below in this subsection) and define the index map for $L_n(\pi_1 X)$ in terms of $(D+\alpha S_f)(D-\alpha S'_f)^{-1}$ to fix this problem. Intuitively, to construct operator $S_f$ and $S'_f$ it essentially amounts to connecting the Poincar\'e duality $S_{\partial M}$ on $\partial M$ to $-S_{\partial M}$. 

Honestly, such a definition is quite unnatural, since $S_f$ or $S'_f$ are not Poincar{\'e} duality operators themselves (in fact not chain complex morphisms in general). However, one can show that the above element $(D+\alpha S_f)(D-\alpha S'_f)^{-1}$ represents a class in  $K_1(C^*(\tilde{X})^G)$.	
\end{remark}

\medskip



Consider manifolds $\partial M \times \mathbb{R}$ and $\partial N \times \mathbb{R}$. We first focusing on the case where the dimension of $M$ and $N$ are odd. The construction for even case are similar. 
\paragraph{Odd case}
We will construct bounded operators $S'_{\partial f\times \mathbb{R}}$ and $S_{\partial f\times \mathbb{R}}$ in $B(L^2(\Lambda(\partial M \times \mathbb{R})) \bigoplus L^2(\Lambda(\partial N \times \mathbb{R})))$, which can be viewed as ``almost Poincar\'e duality operator".

Note that $\partial f: \partial M \to \partial N $ is a homotopy equivalent.
And any form $\omega\in L^2(\Lambda(\partial M \times \mathbb{R})) \bigoplus L^2(\Lambda(\partial N \times \mathbb{R})) $ can be decomposed as
 \[
 \omega_1 + \omega_2 \wedge dt,
 \]
where both $\omega_1, \omega_2$ are $L^2$ forms along $\partial M \sqcup \partial N$.
Let $(x,t)$ be a point in $\partial M\times \mathbb{R}\sqcup \partial N\times \mathbb{R}$.
Let $N_i$ ($ i=0,1,2,3$) be positive integers, such that $\frac{1}{N_i-N_{i-1}}$ are sufficiently small  for $i=1,2,3$. Recall that for now, the dimension of $\partial M $ and $\partial N$ are even. 
We define $S'_{\partial f\times \mathbb{R}}$ by setting $S'_{\partial f\times \mathbb{R}}\omega|_{(x,t)}$ as follows:
\begin{enumerate}
\item when $ -\infty <t \leq N_0$, set $S'_{\partial f\times \mathbb{R}}\omega|_{(x,t)}$ as
    \[
    [(\begin{pmatrix}
          S_{\partial M} &0\\			
          0& -S_{\partial N}	
          \end{pmatrix}\omega_1 )\wedge dt+
      \begin{pmatrix}    
          S_{\partial M} &0\\			    
          0& -S_{\partial N}  
      \end{pmatrix}\omega_2]|_{(x,t)}; 
  \]
\item when $N_0 < t \leq N_1$, set $S'_{\partial f\times \mathbb{R}}\omega|_{(x,t)}$ as
\begin{eqnarray*}   
&&	[(\begin{pmatrix} 
S_{\partial M} &0\\   
0& -  \frac{N_1-t}{N_1-N_0}  S_{\partial N}- \frac{t-N_0}{N_1-N_0} T_{\partial f}^* S_{\partial\tilde{ M}} T_{\partial f}
\end{pmatrix}\omega_1)\wedge dt \\ 
&& + \begin{pmatrix} 
S_{\partial M} &0\\ 
0& -  \frac{N_1-t}{N_1-N_0}  S_{\partial N}- \frac{t-N_0}{N_1-N_0}  T_{\partial f}^* S_{\partial M} T_{\partial f}\end{pmatrix}\omega_2 ]|_{(x,t)};    
\end{eqnarray*}   
\item when $N_1< t \leq N_2$, set $S'_{\partial f\times \mathbb{R}}\omega|_{(x,t)}$ as  
  \begin{eqnarray*}
&&	[(\begin{pmatrix}
\cos((\frac{t-N_1}{N_2-N_1}) \frac{\pi}{2})S_{\partial M} &  \sin((\frac{t-N_1}{N_2-N_1}) \frac{\pi}{2}) S_{\partial M} T_{\partial f}\\
\sin((\frac{t-N_1}{N_2-N_1})\frac{\pi}{2}) T_{\partial f}^*S_{\partial M} & - \cos((\frac{t-N_1}{N_2-N_1}) \frac{\pi}{2}) T_{\partial f}^* S_{\partial M} T_{\partial f}
\end{pmatrix} \omega_1)\wedge dt \\ &&+  \begin{pmatrix}
\cos((\frac{t-N_1}{N_2-N_1}) \frac{\pi}{2})S_{\partial M} &  \sin((\frac{t-N_1}{N_2-N_1}) \frac{\pi}{2}) S_{\partial M} T_{\partial f}\\
\sin((\frac{t-N_1}{N_2-N_1})\frac{\pi}{2}) T_{\partial f}^*S_{\partial M} & - \cos((\frac{t-N_1}{N_2-N_1}) \frac{\pi}{2}) T_{\partial f}^* S_{\partial M} T_{\partial f}\end{pmatrix} \omega_2 ]|_{(x,t)} ;
\end{eqnarray*}
\item when $N_2< t < +\infty$, set $S'_{\partial f\times \mathbb{R}}\omega|_{(x,t)}$ as
\[
[ (\begin{pmatrix}
0 & S_{\partial M} T_{\partial f}\\
T_{\partial f}^*S_{\partial M}&0
\end{pmatrix}\omega_1 )\wedge dt+ \begin{pmatrix}
0 & S_{\partial M}T_{\partial f}\\
T_{\partial f}^*S_{\partial M}&0
\end{pmatrix} \omega_2 ]|_{(x,t)}.
\]
\end{enumerate}
In the meanwhile,  we define $S_{\partial f\times \mathbb{R}}$ by setting $S_{\partial f\times \mathbb{R}}\omega|_{(x,t)}$ as follows:
\begin{enumerate}
	\item when $-\infty< t \leq N_2$, set $S_{\partial f\times \mathbb{R}}\omega|_{(x,t)}$ as $S'_{\partial f\times \mathbb{R}}\omega|_{(x,t)}$;
	\item when $N_2< t \leq N_3$, set $S_{\partial f\times \mathbb{R}}\omega|_{(x,t)}$ as
    \begin{eqnarray*}
    	 &&[ (\begin{pmatrix}
0 & e^{i\frac{t-N_2}{N_3-N_2} \pi} S_{\partial M}T_{\partial f}\\
e^{-i\frac{t-N_2}{N_3-N_2} \pi}T_{\partial f}^*S_{\partial M}&0
\end{pmatrix}\omega_1 )\wedge dt  \\ &&+ \begin{pmatrix}
0 & e^{i\frac{t-N_2}{N_3-N_2} \pi} S_{\partial M} T_{\partial f}\\
e^{-i\frac{t-N_2}{N_3-N_2} \pi} T_{\partial f}^*S_{\partial M}&0
\end{pmatrix} \omega_2 ]|_{(x,t)};
    \end{eqnarray*}
  \item  when $N_3< t < +\infty$, set $S_{\partial f\times \mathbb{R}}\omega|_{(x,t)}$ as
\[
    	 [ (  \begin{pmatrix}
0 & -S_{\partial M} T_{\partial f}\\
-T_{\partial f}^*S_{\partial M}&0
\end{pmatrix}\omega_1 )\wedge dt   +   \begin{pmatrix}
0 & -S_{\partial M} T_{\partial f}\\
-T_{\partial f}^*S_{\partial M}&0
\end{pmatrix} \omega_2 ]|_{(x,t)}.
\]
\end{enumerate}

\begin{remark}\label{replace smooth functions}
\begin{enumerate}
  \item In the above construction, positive integers $N_i$ ($ i=0,1,2,3$) can be chosen arbitrarily.
  \item We can replace all the functions parameterize by $t$ in the above construction by smooth functions.
  \item Without loss of generality, we can assume $S'_{\partial f\times \mathbb{R}}$ and $S_{\partial f\times \mathbb{R}}$ are smoothly defined.
\end{enumerate}

\end{remark}


We define operators $S'_f$ and $S_f$ on $L^2(\Lambda (M_\infty)) \bigoplus L^2(\Lambda (N_\infty))$ as
\begin{eqnarray*}
	S'_f& = & \begin{pmatrix}
S_{M_\infty} & 0\\
0& -S_{N_\infty}
\end{pmatrix}\begin{pmatrix}
1-\chi_{\partial M\times [N_0, +\infty)} &0\\			
0& 1-\chi_{\partial N\times [N_0, +\infty)}	
\end{pmatrix}\\&& + S'_{\partial f\times \mathbb{R}}\begin{pmatrix}
\chi_{\partial M\times [N_0, +\infty)} &0\\			
0& \chi_{\partial N\times [N_0, +\infty)}	
\end{pmatrix};\\
S_f& = & \begin{pmatrix}
S_{M_\infty} & 0\\
0& -S_{N_\infty}
\end{pmatrix}\begin{pmatrix}
1-\chi_{\partial M\times [N_0, +\infty)} &0\\			
0& 1-\chi_{\partial N\times [N_0, +\infty)}	
\end{pmatrix}\\&& + S_{\partial f\times \mathbb{R}}\begin{pmatrix}
\chi_{\partial M\times [N_0, +\infty)} &0\\			
0& \chi_{\partial N\times [N_0, +\infty)}	
\end{pmatrix}.
\end{eqnarray*}

It is not hard to prove that $S_f$ and $S'_f$ are bounded operators on Hilbert space $L^2(\Lambda (M_\infty)) \bigoplus L^2(\Lambda (N_\infty))$.

\medskip


With the above construction and the notations from Remark \ref{re.notation}, we have $G$-equivariant versions of $S_f$ and $S'_f$, which are bounded operators on Hilbert space $L^2(\Lambda (\tilde{M}_\infty)) \bigoplus L^2(\Lambda (\tilde{N}_\infty))$.
We will denote by $S_f$ and $S'_f$ as well.

The following Proposition is a corollary of Lemma \ref{lemma almost invertible}.
\begin{proposition}\label{proposition almost invertible}
With the notations as above, for some proper chosen $\alpha>0$, we have
\[
\begin{pmatrix}
\left(\begin{matrix}
d_{\tilde{M}_\infty}+d^*_{\tilde{M}_\infty}& 0\\
0 & d_{\tilde{N}_\infty} + d^*_{\tilde{N}_\infty}	
\end{matrix}\right) & \alpha S'_f\\			
\alpha S'_f& \left(\begin{matrix}
d_{\tilde{M}_\infty}+d^*_{\tilde{M}_\infty}& 0\\
0 & d_{\tilde{N}_\infty} + d^*_{\tilde{N}_\infty}
\end{matrix}\right)		
\end{pmatrix},
\]
and
\[
\begin{pmatrix}
\left(\begin{matrix}
d_{\tilde{M}_\infty}+d^*_{\tilde{M}_\infty}& 0\\
0 & d_{\tilde{N}_\infty} + d^*_{\tilde{N}_\infty}	
\end{matrix}\right) & \alpha S_f\\			
\alpha S_f& \left(\begin{matrix}
d_{\tilde{M}_\infty}+d^*_{\tilde{M}_\infty}& 0\\
0 & d_{\tilde{N}_\infty} + d^*_{\tilde{N}_\infty}
\end{matrix}\right)		
\end{pmatrix},
\]
are invertible.
\end{proposition}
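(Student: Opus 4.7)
The plan is to invoke Lemma \ref{lemma almost invertible} directly. Take $\mathcal{H}_1 = \mathcal{H}_2 = L^2(\Lambda(\tilde{M}_\infty)) \oplus L^2(\Lambda(\tilde{N}_\infty))$ equipped with the block-diagonal de Rham differential $d = d_{\tilde{M}_\infty} \oplus d_{\tilde{N}_\infty}$ (with signs chosen on the two copies so that the conclusion of Lemma \ref{lemma almost invertible} matches the diagonal blocks of the proposition), and set $F = G = S'_f$ for the first matrix and $F = G = S_f$ for the second. Hypotheses (1)--(3) are essentially immediate: both $S'_f$ and $S_f$ are uniformly bounded, $G$-equivariant, of finite propagation, and preserve the common smooth core of the essentially self-adjoint differentials, while $d^2 = 0$ makes the parameter controlling $\|d_i^2\|$ vanish. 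The substance of the argument is the verification of (4) and (5) with an $\epsilon$ that can be made arbitrarily small while $K$ stays fixed.

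For condition (4) one must bound $\|d S'_f + S'_f d\|$ (and similarly for $S_f$) by $\epsilon^2$. On the bulk region where $\chi_{\partial M \times [N_0,\infty)} = 0$, the operator $S'_f$ restricts to $S_{\tilde{M}_\infty} \oplus (-S_{\tilde{N}_\infty})$, an honest Poincar\'e duality operator, so the relevant (anti)commutator vanishes there. On the cylindrical end $\partial M \times [N_0,\infty) \sqcup \partial N \times [N_0,\infty)$, the definition of $S'_{\partial f \times \mathbb{R}}$ shows that at each fixed $t$ the operator is a Poincar\'e duality operator for the transverse de~Rham complex along the slice, by Lemma \ref{lemma homo poincare duality topic} and the paths displayed in and after Equation \eqref{eq.1}; consequently the obstruction to (anti)commuting with the full $d$ comes only from the $dt$-component, which evaluates to $\partial_t$ applied to the coefficient functions defining $S'_f$. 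By construction those coefficients are Lipschitz with constant $\leq C/\min_i(N_i - N_{i-1})$, so the bound can be made smaller than any prescribed $\epsilon^2$ by enlarging the spacings. The same argument applies verbatim to $S_f$, with an additional contribution of identical form from the transitional segment $[N_2, N_3]$ carrying the phase $e^{i\pi(t-N_2)/(N_3-N_2)}$.

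Condition (5), the chain-homotopy identity $\|1 - S'_f (S'_f)^* - dy - yd\| \leq \epsilon^2$ together with its twin, is handled analogously. At each slice $t$ of the end, $(S'_f)^2$ agrees with the square of a genuine Poincar\'e duality operator and is therefore the identity modulo $d\mathscr{Y}(t) + \mathscr{Y}(t)d$, where $\mathscr{Y}(t)$ is furnished slicewise by property~(4) of Subsection~\ref{ssec.HSsubmersion}. Packaging the family $\{\mathscr{Y}(t)\}$ into a single bounded operator $y$ on $L^2(\Lambda(\tilde{M}_\infty)) \oplus L^2(\Lambda(\tilde{N}_\infty))$ (taking $y = 0$ on the bulk, where the duality is exact) yields the required chain homotopy; the error term is once again governed by $\partial_t$-derivatives and is therefore bounded by $C/\min_i(N_i - N_{i-1})$. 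The operator $z$ is of identical form.

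Finally, let $K$ be a uniform upper bound on $\|S'_f\|, \|S_f\|, \|y\|, \|z\|$ (the last two uniform because the family $\mathscr{Y}(t)$ is uniformly bounded in $t$). Fix $\alpha \leq \tfrac{1}{2}K^{-2}(1+K)^{-2}$, then enlarge the spacings $N_i - N_{i-1}$ until $\epsilon \leq \tfrac{1}{2}\sqrt{\alpha/(2\alpha+1)}$, and invoke Lemma \ref{lemma almost invertible}. I expect the main technical obstacle to be writing $\mathscr{Y}(t)$ on the end as a single globally bounded operator and checking that on the transitional segment $[N_2, N_3]$ of the definition of $S_f$, where the off-diagonal block carries a global phase, the slicewise Poincar\'e-duality structure still holds so that the same $1/(N_i - N_{i-1})$ decay of $t$-derivatives controls the estimates; once the partition of unity is chosen compatibly with $\chi_{\partial M \times [N_0,\infty)}$, the remaining estimates are a bookkeeping exercise.
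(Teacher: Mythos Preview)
The gap is your choice $G=F=S'_f$. A Poincar\'e duality operator $A$ for the transverse complex satisfies only $A\,d_\partial + d_\partial^*\,A = 0$; the companion relation $d_\partial\,A + A\,d_\partial^* = 0$ holds for the Hodge-type $S_{\partial\tilde N}$ (because $S_{\partial\tilde N}^2=1$) but \emph{not} for a slice operator like $P = T_{\partial f}^*\,S_{\partial\tilde M}\,T_{\partial f}$. Consequently $P$ is a chain map from $(L^2\Lambda,d_\partial)$ to $(L^2\Lambda,-d_\partial^*)$ but not in the reverse direction, so $P^2$ is not even a chain endomorphism, and your assertion that ``$(S'_f)^2$ is the identity modulo $d\mathscr{Y}(t)+\mathscr{Y}(t)d$'' fails. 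Concretely, writing $P=S_{\partial\tilde N}\,T'_{\partial f}T_{\partial f}$ as in the proof of Lemma~\ref{lemma homo poincare duality topic}, one sees that $1-P^2$ contains both $d_\partial$-homotopy and $d_\partial^*$-homotopy terms which cannot be absorbed into a single $d_1y+yd_1$. The same obstruction already appears in condition (4): whichever way you assign signs to $d_1,d_2$, one of the two commutator estimates for $F$ and for $G$ requires the missing relation $d_\partial P + P d_\partial^*=0$.

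The paper avoids this by taking $G$ to be a \emph{different} operator $S_g$, built in exact analogy with $S_f$ but using the homotopy inverse $\partial g\colon\partial N\to\partial M$ (so with $T_{\partial g},T_{\partial g}^*$ in place of $T_{\partial f}^*,T_{\partial f}$). At each slice $S_g$ then furnishes a genuine chain-homotopy inverse to $S_f$, and the products $S_fS_g$, $S_gS_f$ are chain homotopic to $1$ via the correct differentials; the paper assembles the slicewise homotopies (called $y_1,y_2,y_3$ there) into global $y,z$. With this one change your strategy coincides with the paper's: fix $K=\max\{\|S_f\|,\|S_g\|,\|y\|,\|z\|\}$ first, then stretch the $N_i-N_{i-1}$ to drive $\epsilon$ below the threshold of Lemma~\ref{lemma almost invertible}.
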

\begin{prf}
We will prove the $S_f$ case only. 
In our argument, $S_f$ takes the role of $F$ in Lemma \ref{lemma almost invertible}.
Thanks to Lemma \ref{lemma almost invertible}.
It is sufficient to show the existence of $G$, $y$, $z$ and $K \geq 1$ which satisfying the conditions in Lemma \ref{lemma almost invertible}.

We first prove the existence of $G$.
Recall that the restricting of $f$ on the boundary $\partial f: \partial M \to \partial N$ is homotopy equivalence.
 Suppose $\partial g: \partial N \to \partial M$ to be the homotopy inverse of  $\partial f$.
 Similarly, we can define an operator $S'_{\partial g\times \mathbb{R}}$ in  $B(L^2(\Lambda(\partial \tilde{M} \times \mathbb{R})) \bigoplus L^2(\Lambda(\partial \tilde{N} \times \mathbb{R})) )$ as follows:
\begin{enumerate}
	\item when $ -\infty <t \leq N_0$, set $S'_{\partial g\times \mathbb{R}}\omega|_{(x,t)}$ as
	\[
	[(\begin{pmatrix}
S_{\partial\tilde{ M}} &0\\			
0& -S_{\partial\tilde{ N}}	
\end{pmatrix}\omega_1 )\wedge dt+
\begin{pmatrix}
S_{\partial\tilde{ M}} &0\\			
0& -S_{\partial\tilde{ N}}	
\end{pmatrix}\omega_2]|_{(x,t)};
	\]
	\item when $N_0 < t \leq N_1$, set $S'_{\partial g\times \mathbb{R}}\omega|_{(x,t)}$ as
	\begin{eqnarray*}
	&&	[(\begin{pmatrix}
S_{\partial\tilde{ M}} &0\\
0& -  \frac{N_1-t}{N_1-N_0}  S_{\partial\tilde{ N}}- \frac{t-N_0}{N_1-N_0} T_{\partial g}S_{\partial\tilde{ M}} T_{\partial g}^*
\end{pmatrix}\omega_1)\wedge dt \\
&& + \begin{pmatrix}
S_{\partial\tilde{ M}} &0\\
0& -  \frac{N_1-t}{N_1-N_0}  S_{\partial\tilde{ N}}- \frac{t-N_0}{N_1-N_0}  T_{\partial g}S_{\partial\tilde{ M}}T_{\partial g}^*
\end{pmatrix}\omega_2 ]|_{(x,t)};
	\end{eqnarray*}
	\item when $N_1< t \leq N_2$, set $S'_{\partial g\times \mathbb{R}}\omega|_{(x,t)}$ as
	\begin{eqnarray*}
	&&	[(\begin{pmatrix}
\cos((\frac{t-N_1}{N_2-N_1}) \frac{\pi}{2})S_{\partial\tilde{M}} &  \sin((\frac{t-N_1}{N_2-N_1}) \frac{\pi}{2}) S_{\partial \tilde{M}} T_{\partial g}^*\\
\sin((\frac{t-N_1}{N_2-N_1})\frac{\pi}{2}) T_{\partial g}S_{\partial \tilde{M}} & - \cos((\frac{t-N_1}{N_2-N_1}) \frac{\pi}{2}) T_{\partial g} S_{\partial \tilde{M}}T_{\partial g}^*
\end{pmatrix} \omega_1)\wedge dt \\ &&+  \begin{pmatrix}
\cos((\frac{t-N_1}{N_2-N_1}) \frac{\pi}{2})S_{\partial\tilde{M}} &  \sin((\frac{t-N_1}{N_2-N_1}) \frac{\pi}{2}) S_{\partial \tilde{M}}T_{\partial g}^*\\
\sin((\frac{t-N_1}{N_2-N_1})\frac{\pi}{2})T_{\partial g}^*S_{\partial \tilde{M}} & - \cos((\frac{t-N_1}{N_2-N_1}) \frac{\pi}{2}) T_{\partial g}S_{\partial \tilde{M}} T_{\partial g}^*
\end{pmatrix} \omega_2 ]|_{(x,t)} ;
	\end{eqnarray*}
	\item when $N_2< t < +\infty$, set $S'_{\partial g\times \mathbb{R}}\omega|_{(x,t)}$ as
    \[
   [ (\begin{pmatrix}
0 & S_{\partial \tilde{M}} T_{\partial g}^*\\
T_{\partial g}S_{\partial \tilde{M}}&0
\end{pmatrix}\omega_1 )\wedge dt+ \begin{pmatrix}
0 & S_{\partial \tilde{M}} T_{\partial g}^*\\
T_{\partial g}S_{\partial \tilde{M}}&0
\end{pmatrix} \omega_2 ]|_{(x,t)}.
    \]
\end{enumerate}
 
Then we define an operator $S_g$ in $B(L^2(\Lambda(\tilde{M}_\infty )) \oplus L^2(\Lambda( \tilde{N}_\infty)))$ as:
\begin{eqnarray*}
S_g & = & \begin{pmatrix}
S_{\tilde{M}_\infty} & 0\\
0& -S_{\tilde{N}_\infty}
\end{pmatrix}\begin{pmatrix}
1-\chi_{\partial\tilde{ M}\times [N_0, +\infty)} &0\\			
0& 1-\chi_{\partial\tilde{ N}\times [N_0, +\infty)}	
\end{pmatrix}\\&& + S_{\partial g\times \mathbb{R}}\begin{pmatrix}
\chi_{\partial\tilde{ M}\times [N_0, +\infty)} &0\\			
0& \chi_{\partial\tilde{ N}\times [N_0, +\infty)}	
\end{pmatrix}.
\end{eqnarray*}

By choosing the smooth functions in Remark \ref{replace smooth functions} so that their derivative functions all have small supremum-norm, say $\varepsilon$, we have
\[\|d_\infty S_f+S_f d^*_\infty\|\leq \varepsilon;\]
and
\[\|-d^*_\infty S_g-S_g d_\infty\|\leq \varepsilon,\]
where, $d_\infty=\begin{pmatrix}
	d_{\tilde{M}_\infty} & 0 \\
	0 & d_{\tilde{N}_\infty}
\end{pmatrix}$.

We now prove the existence of $y$ and $z$.
Again we define $y$ by setting $y (w)|_{(x,t)}$.
We will show the definition of $y (w)|_{(x,t)}, N_0 \leq t\leq N_1$ in details only.
The definition of $y (w)|_{x,t}$ for other $t$ are similar.

We consider $\partial \tilde{M}\times \mathbb{R}\sqcup \partial \tilde{N}\times \mathbb{R}$ first.
As shown in Lemma \ref{lemma homo poincare duality topic}, $S_{\partial \tilde{N}\times \mathbb{R}}$ is chain homotopy equivalent to $T_{\partial f\times \mathbb{R}}^* S_{\partial \tilde{M}\times \mathbb{R}} T_{\partial f\times \mathbb{R}}$ and $T_{\partial g\times \mathbb{R}} S_{\partial \tilde{M}\times \mathbb{R}} T_{\partial f\times \mathbb{R}}^*$.
It is easy to see that bounded operators $T_{\partial f\times \mathbb{R}}^* S_{\partial \tilde{M}\times \mathbb{R}} T_{\partial f\times \mathbb{R}}S_{\partial \tilde{N}\times \mathbb{R}}$, $S_{\partial \tilde{N}\times \mathbb{R}}T_{\partial f\times \mathbb{R}}^* S_{\partial \tilde{M}\times \mathbb{R}} T_{\partial f\times \mathbb{R}}$ and $T_{\partial f\times \mathbb{R}}^* S_{\partial \tilde{M}\times \mathbb{R}} T_{\partial f\times \mathbb{R}}T_{\partial g\times \mathbb{R}} S_{\partial \tilde{M}\times \mathbb{R}} T_{\partial f\times \mathbb{R}}^*$ are all chain homotopy equivalent to 1.
 Let $y_1$, $y_2$ and $y_3$ be the connecting map respectively. 

Then, for $N_0 \leq t\leq N_1$, we can define $y (w)|_{x,t}$ by

\[
\left(\frac{(t-N_0)(N_1-t)}{(N_1-N_0)^2}y_1 + \frac{(t-N_0)(N_1-t)}{(N_1-N_0)^2}y_2 +(\frac{t-N_0}{N_1-N_0})^2 y_3 \right)(w)|_{(x,t)}.
\]

%

With a little modification, we can see that in $B(L^2(\Lambda(\tilde{M}_\infty )) \bigoplus L^2(\Lambda( \tilde{N}_\infty)) )$ there exist bounded operators $y$ and $z$ satisfying
\[\|1-S_f S_g-d_\infty y- yd_\infty \|\leq C \varepsilon,\]
and
\[\|1-S_gS_f-d^*_\infty z-z d^*_\infty \|\leq C \varepsilon,\]
where $C$ is a constant.
Now it is easy to see, choosing the smooth functions in Remark \ref{replace smooth functions} so that their derivative functions all have small supremum-norm, $G$, $y$, $z$ will satisfying the conditions in Lemma \ref{lemma almost invertible}.
On the other hand, we can choose constant $K=\max\{\|S_f\|, \|S_g\|, \|y\|, \|z\|\}$.
Rather, change supremum-norm of the derivative functions do not increase $K$.
In summary, we have
\[
\begin{pmatrix}
\left(\begin{matrix}
d_{\tilde{M}_\infty}+d^*_{\tilde{M}_\infty}& 0\\
0 & d_{\tilde{N}_\infty} + d^*_{\tilde{N}_\infty}	
\end{matrix}\right) & \alpha S_f\\			
\alpha S_f& \left(\begin{matrix}
d_{\tilde{M}_\infty}+d^*_{\tilde{M}_\infty}& 0\\
0 & d_{\tilde{N}_\infty} + d^*_{\tilde{N}_\infty}
\end{matrix}\right)		
\end{pmatrix}
\]
is invertible for some properly chosen $\alpha$.
\end{prf}

\begin{corollary}\label{cor almost invertible}
For some proper chosen $\alpha>0$, bounded operators
\[
\left(
\begin{matrix}
d_{\tilde{M}_\infty}+d^*_{\tilde{M}_\infty}& 0\\
0 & d_{\tilde{N}_\infty} + d^*_{\tilde{N}_\infty}	
\end{matrix}
\right) \pm \alpha S'_f,
\]
and
\[
\left(
\begin{matrix}
d_{\tilde{M}_\infty}+d^*_{\tilde{M}_\infty}& 0\\
0 & d_{\tilde{N}_\infty} + d^*_{\tilde{N}_\infty}
\end{matrix}
\right) \pm \alpha S_f
\]
are invertible.
\end{corollary}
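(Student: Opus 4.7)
The plan is to derive the corollary from Proposition \ref{proposition almost invertible} via a unitary block-diagonalisation. Write $D$ for the self-adjoint diagonal block
\[
D := \begin{pmatrix}
d_{\tilde{M}_\infty}+d^*_{\tilde{M}_\infty}& 0\\
0 & d_{\tilde{N}_\infty} + d^*_{\tilde{N}_\infty}
\end{pmatrix},
\]
and let $S$ stand for either $S_f$ or $S'_f$. Inspection of the $2\times 2$ matrix coefficients entering the construction of $S_f$ and $S'_f$ shows that both are bounded self-adjoint operators: for example $(S_{\partial M} T_{\partial f})^* = T_{\partial f}^* S_{\partial M}$, and the $e^{\pm i(t-N_2)\pi/(N_3-N_2)}$ factors in the symmetric off-diagonal positions are complex conjugates of one another. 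Hence the $4\times 4$ block operator
\[
\mathcal{M} := \begin{pmatrix} D & \alpha S \\ \alpha S & D \end{pmatrix},
\]
which Proposition \ref{proposition almost invertible} declares invertible for a suitable $\alpha > 0$, is a self-adjoint operator on $\bigl(L^2(\Lambda(\tilde{M}_\infty)) \oplus L^2(\Lambda(\tilde{N}_\infty))\bigr)^{\oplus 2}$.

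Next I would introduce the self-adjoint unitary
\[
U = \tfrac{1}{\sqrt{2}}\begin{pmatrix} I & I \\ I & -I \end{pmatrix},
\]
acting as a scalar $2\times 2$ matrix on each coordinate of the direct sum. It satisfies $U = U^* = U^{-1}$ and preserves the domain of $\mathcal{M}$, since that domain is determined coordinate-wise as two copies of $\mathrm{dom}(D)$ and the latter is a linear subspace. A direct block computation gives
\[
U\mathcal{M}U = \begin{pmatrix} D + \alpha S & 0 \\ 0 & D - \alpha S \end{pmatrix}.
\]
Because conjugation by a bounded unitary preserves bounded invertibility, and a block-diagonal operator is invertible if and only if each of its blocks is, this decoupling is equivalent to the simultaneous invertibility of $D + \alpha S$ and $D - \alpha S$.

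Applying this equivalence once with $S = S_f$ and once with $S = S'_f$, using the $\alpha$ furnished by Proposition \ref{proposition almost invertible} in each case, yields all four claimed invertibility statements. I do not foresee a genuine obstacle here: the corollary is essentially a repackaging of the proposition under the observation that an anti-diagonally coupled $2\times 2$ self-adjoint system decouples into its ``sum'' and ``difference'' components under the standard $45^\circ$ rotation, so the invertibility of $D \pm \alpha S_f$ and $D \pm \alpha S'_f$ is already encoded in Proposition \ref{proposition almost invertible}.
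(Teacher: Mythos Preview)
Your proof is correct and is essentially the same argument as the paper's. The paper restricts the $4\times 4$ operator to the two invariant subspaces $\{(v,v)\}$ and $\{(v,-v)\}$, on which it acts as $D+\alpha S$ and $D-\alpha S$ respectively; your unitary $U$ is precisely the change of basis sending these subspaces to the coordinate axes, so your block-diagonalisation is a cleaner packaging of the identical idea.
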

\begin{prf}
	Restricting
	\[
\begin{pmatrix}
\left(\begin{matrix}
d_{\tilde{M}_\infty}+d^*_{\tilde{M}_\infty}& 0\\
0 & d_{\tilde{N}_\infty} + d^*_{\tilde{N}_\infty}	
\end{matrix}\right) & \alpha S'_f\\			
\alpha S'_f& \left(\begin{matrix}
d_{\tilde{M}_\infty}+d^*_{\tilde{M}_\infty}& 0\\
0 & d_{\tilde{N}_\infty} + d^*_{\tilde{N}_\infty}
\end{matrix}\right)		
\end{pmatrix},
\]
 to the subspace
 \[
\bigvee \left\{\begin{pmatrix}v\\v\end{pmatrix}|v\in L^2(\Lambda (\tilde{M}_\infty)) \bigoplus L^2(\Lambda (\tilde{N}_\infty)) \right\}
 \]
we have $\begin{pmatrix}
d_{\tilde{M}_\infty}+d_{\tilde{M}_\infty}^*& 0\\
0 & d_{\tilde{N}_\infty} + d_{\tilde{N}_\infty}^*	
\end{pmatrix}+ \alpha S'_f$ is invertible.

And, restricting
\[
\begin{pmatrix}
\left(\begin{matrix}
d_{\tilde{M}_\infty}+d^*_{\tilde{M}_\infty}& 0\\
0 & d_{\tilde{N}_\infty} + d^*_{\tilde{N}_\infty}	
\end{matrix}\right) & \alpha S'_f\\			
\alpha S'_f& \left(\begin{matrix}
d_{\tilde{M}_\infty}+d^*_{\tilde{M}_\infty}& 0\\
0 & d_{\tilde{N}_\infty} + d^*_{\tilde{N}_\infty}
\end{matrix}\right)		
\end{pmatrix},
\]
to the subspace
\[
\bigvee \left\{\begin{pmatrix}v\\-v\end{pmatrix}|v\in L^2(\Lambda (\tilde{M}_\infty)) \bigoplus L^2(\Lambda (\tilde{N}_\infty)) \right\},
\]
we have $\begin{pmatrix}
d_{\tilde{M}_\infty}+d_{\tilde{M}_\infty}^*& 0\\
0 & d_{\tilde{N}_\infty} + d_{\tilde{N}_\infty}^*	
\end{pmatrix}+ \alpha S'_f$ is invertible.

Using the same arguments we can get
\[
\begin{pmatrix}
d_{\tilde{M}_\infty}+d_{\tilde{M}_\infty}^*& 0\\
0 & d_{\tilde{N}_\infty} + d_{\tilde{N}_\infty}^*	
\end{pmatrix}\pm \alpha S_f
\]
are invertible.
\end{prf}

Set 
\[
D=\begin{pmatrix}
d_{\tilde{M}_\infty}+d_{\tilde{M}_\infty}^*& 0\\
0 & d_{\tilde{N}_\infty} + d_{\tilde{N}_\infty}^*	
\end{pmatrix}.
\]
Following subsection \ref{subsection Homotopy invariance of higher signature},  we consider the following invertible operator
\[
(D+\alpha S_f)(D-\alpha S'_f)^{-1}
\]
from $L^2(\Lambda^{even}(\tilde{M}_\infty)\bigoplus \Lambda^{even}(\tilde{N}_\infty))$ to $L^2(\Lambda^{even}(\tilde{M}_\infty)\bigoplus  \Lambda^{even}(\tilde{N}_\infty))$, with $\alpha$ while chosen. 

Now, our main task is to show that the above invertible operator defines a class in $K_1(C^*(\tilde{X})^G$. Let $N_5\geq N_4$ are two integers bigger than $N_3$.
Let $\rho: M_\infty\sqcup N_\infty \to [0,1]$ a smooth function which is constant $1$ for $x\leq N_4$ and $0$ when $x$ is greater than $N_5$, always constant along $\partial M  \sqcup \partial N$ and $\rho'$ has small supremum-norm.
Then we have
\begin{eqnarray*}
	(D+\alpha S_f)(D-\alpha S'_f)^{-1}&=&1+\alpha(S'_f+S_f)\rho(D-\alpha S'_f)^{-1}\\
	&=& 1+\alpha(S'_f+S_f)(D-\alpha S'_f)^{-1}\rho \\
	&& +\alpha(S'_f+S_f)(D-\alpha S'_f)^{-1}[D,\rho](D-\alpha S'_f)^{-1}.
\end{eqnarray*}
 Since we can make the norm of 
\[
\alpha(S'_f+S_f)(D-\alpha S'_f)^{-1}[D,\rho](D-\alpha S'_f)^{-1}
\]
to be sufficiently small, one can see that $(D+\alpha S_f)(D-\alpha S'_f)^{-1}$ and $1+\alpha(S'_f+S_f)(D-\alpha S'_f)^{-1}\rho$ represent the same $K$-theory element. Furthermore, 
note that $\alpha(S_f+S_f)(D-\alpha S'_f)^{-1}$ belongs to $C^*(\tilde{X} \times [1,\infty))^G$, we can approximate it by an  invertible element $a$ in $C^*(\tilde{X} \times [1,\infty))^G$ with finite propagation, we may suppose the propagation of $a$ is bounded by $p$. Then obviously $1+\alpha a\rho$ defines an element in $K_1(C^*(\tilde{X}\times [1,N_5+p])^G)$. Thus one can see that 
\[(D+\alpha S_f)(D-\alpha S'_f)^{-1}\]
represent an element in $K_1(C^*(\tilde{X})^G)$, 
which will be denoted by  $\text{Ind}(\theta)$. We also omit $\alpha$ and no confusion shall be arose.

\paragraph{Even case}
When the dimension of $M$ and $N$ are even, the definition of   $S_f$ and $S_f'$ are totally similar. It will cause no confusion if we leave out the details. We thus consider the following well defined operator
\[
P_+(D+\alpha S_f)~\text{~and~}~P_+(D-\alpha S'_f).
\]
while $\alpha$ is well chosen.
Similarly, we can show that $[P_+(D+\alpha S_f)]-[P_+(D-\alpha S'_f)]$ actually defines an element in $K_0(C^*(\tilde{X})^G)$.

%
In fact, let $\rho: M_\infty\sqcup N_\infty \to [0,1]$ a smooth function which is constant $1$ for $x\leq N_3$ and $0$ when $x$ is greater than $N_3 +1$, constant along $\partial M  \sqcup \partial N$ and $\rho'$ has small supremum-norm.
Then the same as the proof of Lemma \ref{lemma even functional cal for perturb}, we have
\begin{align*}
&(D+\alpha S_f)\left(1+(D+\alpha S_f)^2\right)^{-1/2}-(D-\alpha S'_f)\left(1+(D-\alpha S'_f)^2\right)^{-1/2}\\
={}& \frac{1}{\pi}\int_1^\infty\frac{s}{\sqrt{s^2-1}}\left(R_1\rho(D-\alpha S_f+is)^{-1}+R_2\rho(D-\alpha S_f-is)^{-1}\right)ds\\
={}& \frac{1}{\pi}\int_1^\infty\frac{s}{\sqrt{s^2-1}}\left(R_1(D-\alpha S_f+is)^{-1}\rho + R_2(D-\alpha S_f-is)^{-1}\rho\right)ds\;+ \epsilon	
\end{align*}
where, $R_1 = (D+\alpha S_f+is)^{-1}(S'_f+S_f )$, $R_2 = (D+\alpha S_f-is)^{-1}(S'_f+S_f )$ and $\epsilon$ can be represent as following integral
\begin{align*}
&-\frac{1}{\pi}\int_1^\infty\frac{s}{\sqrt{s^2-1}}\left(R_1(D-\alpha S_f+is)^{-1}[D,\rho](D-\alpha S_f+is)^{-1}\right)ds\\
&-\frac{1}{\pi}\int_1^\infty\frac{s}{\sqrt{s^2-1}}\left(R_2(D-\alpha S_f-is)^{-1}[D,\rho](D-\alpha S_f+is)^{-1}\right)ds	
\end{align*}
from the above formula we see that $\epsilon$ ``only" dependents on $[D,\rho]$, so we can choose $\rho$ such that the norm of $\epsilon$ is sufficiently small.

Above computation tell us that
\[
[P_+(D+\alpha S_f)]-[P_+(D-\alpha S'_f)]
\]
defines an class in $K_0(C^*(\tilde{X}\times [1,N_3+1])^G)$, which is isomorphic to $ K_0(C^*(\tilde{X})^G)$.
We still denote this $K$-theory class in $K_0(C^*(\tilde{X})^G)$ by $[P_+(D+\alpha S_f)]-[P_+(D-\alpha S'_f)]$, and we think this abuse of notation would not cause any confusion.

In summary, we have a map $\text{Ind}: L_n (\pi_1 X)\to K_n(C^*(\tilde{X})^G)$.
We conclude this subsection by the following simple consequence.

\begin{theorem}\label{theorem homotopy trivial L}
Let $\theta=(M,\partial M,\phi, N, \partial N,\psi, f) \in L_n(\pi_1 X)$ and suppose $f :M \to N$ is a homotopy equivalent (not only restricting to the boundary).
Then $\text{Ind}(\theta)$ is trivial.
\end{theorem}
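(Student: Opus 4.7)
The plan is to exhibit $\text{Ind}(\theta)$ as the trivial class in $K_n(C^*(\tilde{X})^G)$ by constructing an explicit homotopy that ends in a tautologically trivial invertible / projection-pair. The extra ingredient available under the hypothesis, beyond what is used to define $\text{Ind}(\theta)$ in general, is that $f$ extends (as $\partial f \times \mathrm{id}$ on the cylinder) to a homotopy equivalence $f_\infty : M_\infty \to N_\infty$, so that all of the properties of the Hilsum--Skandalis map $T_{f_\infty}$ listed in subsection \ref{ssec.HSsubmersion}---in particular the chain homotopy $1 - T'_{f_\infty} T_{f_\infty} = d_{\tilde{N}_\infty} \mathscr{Y} + \mathscr{Y} d_{\tilde{N}_\infty}$ and its counterpart for a homotopy inverse $g_\infty$---now hold on all of $\tilde{M}_\infty$ and $\tilde{N}_\infty$, not merely on the cylindrical portion.

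Using this, I would construct a one-parameter family $(S'_{f,s}, S_{f,s})$, $s \in [0,1]$, of bounded operators on $L^2(\Lambda(\tilde{M}_\infty)) \oplus L^2(\Lambda(\tilde{N}_\infty))$ with $(S'_{f,0}, S_{f,0}) = (S'_f, S_f)$ and such that at $s=1$ the transition regions $[N_0, N_3]$ of the defining construction have been slid entirely inward, yielding
\[
S'_{f,1} = \begin{pmatrix} 0 & S_{M_\infty} T_{f_\infty} \\ T_{f_\infty}^* S_{M_\infty} & 0 \end{pmatrix}, \qquad S_{f,1} = -S'_{f,1},
\]
on all of $M_\infty \sqcup N_\infty$. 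The freedom in Remark \ref{replace smooth functions} to choose the smoothing profiles and the parameters $N_i$ allows this family to be constructed continuously in $s$, e.g.\ by replacing the cutoff $\chi_{\partial M \times [N_0,\infty)}$ by an $s$-dependent family that equals $1$ everywhere at $s=1$. A uniform-in-$s$ version of Proposition \ref{proposition almost invertible} and Corollary \ref{cor almost invertible}, grounded in Lemma \ref{lemma almost invertible}, then shows that for a single appropriate $\alpha>0$ each of $D \pm \alpha S'_{f,s}$ and $D \pm \alpha S_{f,s}$ is invertible throughout $s\in[0,1]$; the crucial point is that the operators $T_{f_\infty}$, $T_{g_\infty}$, $y$, $z$ needed to verify the hypotheses of Lemma \ref{lemma almost invertible} are all globally available precisely because $f$ is a homotopy equivalence on all of $M$.

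With uniform invertibility secured, the finite-propagation argument following Corollary \ref{cor almost invertible} promotes the family to a continuous path of $K$-theory classes---of invertibles in the odd case and of differences of positive projections in the even case---in $K_n(C^*(\tilde{X})^G)$. Evaluating at $s=1$, since $S'_{f,1} = -S_{f,1}$, we compute
\[
(D + \alpha S_{f,1})(D - \alpha S'_{f,1})^{-1} = (D + \alpha S_{f,1})(D + \alpha S_{f,1})^{-1} = 1,
\]
and correspondingly $P_+(D + \alpha S_{f,1}) = P_+(D - \alpha S'_{f,1})$, both representing the trivial class. The principal technical obstacle is arranging the $s$-deformation so that invertibility with a single $\alpha$ and the finite-propagation approximation underlying the $K$-theory class both hold uniformly in $s$; choosing the $s$-dependent smoothing profiles to have uniformly small derivative supremum-norm, exactly as in the proof of Proposition \ref{proposition almost invertible}, should suffice.
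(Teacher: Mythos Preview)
The paper gives no explicit proof, labeling the theorem a ``simple consequence'' of the constructions in subsection~\ref{subsection M infty and N infty}; your proposal supplies precisely the argument the paper leaves implicit. The key insight---that the global availability of $T_{f_\infty}$, $T_{g_\infty}$, and the associated chain homotopies allows the staged construction of $(S'_f, S_f)$ to be slid to its terminal values everywhere on $\tilde M_\infty \sqcup \tilde N_\infty$, yielding $S_{f,1} = -S'_{f,1}$ and hence the identity representative---is exactly right, and your identification of uniform-in-$s$ invertibility via Lemma~\ref{lemma almost invertible} as the technical crux is apt. One point worth making explicit in carrying this out: arrange the deformation so that the region $\{t > N_3\}$ is left untouched, which keeps $S'_{f,s} + S_{f,s}$ supported in the fixed compact piece $M \cup (\partial M \times [1,N_3])$ for every $s$ and guarantees that the path of invertibles remains in the unitization of $C^*(\tilde X)^G$ throughout.
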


\subsection{Additivity}
In this subsection, we will continue our study of the index map $\text{Ind}: L_n (\pi_1 X)\to K_n(C^*(\tilde{X})^G)$.

Let $\theta$ and $\theta'$ be two elements in $L_n(\pi_1 X)$.
Recall, the addition of the abelian group $L_n(\pi_1 X)$ is given by disjoint union(\cite[Definition 3.10]{WXY16}, Definition \ref{definition eL}).
In the following theorem, we use a misleading notation, $\theta\sqcup \theta'$, denote the disjoint union of $\theta$ and $-\theta'$, which belongs to $L_n(\pi_1 X)$.
\begin{theorem}\label{theorem L differ}
Let $\theta, \theta'\in L_n(\pi_1 X)$.
We further suppose $\theta =(M, \partial M,\phi, N,\partial N, \psi, f )$, $\theta' = (M', \partial M', \phi', N', \partial N',\psi', f')$ and satisfies the following properties:
\begin{enumerate}
\item $\partial M=\partial M'$,\;$\partial N=\partial N'$, restricting to the boundary $f=f'$,\;$\phi=\phi'$ and $\psi=\psi'$;
\item There exist two manifolds with boundary $(W, \partial W)$ and $(V, \partial V)$, continuous maps $\Phi:W \to X$(resp. $\Psi:V \to X$).
Manifolds $W$ and $V$ both of dimension $n+1$.
Moreover, $\partial W = M\sqcup_{\partial M} M'$(resp. $\partial V = N\sqcup_{\partial N} N'$) and $\Phi$(resp. $\Psi$) restricts to $\phi\sqcup \phi'$(resp. $\psi\sqcup \psi'$) on $M\sqcup_{\partial M} M'$(resp. $N\sqcup_{\partial N} N'$);
\item There exist a degree one normal map $F:W\to V$ such that $\Psi \circ F=\Phi$.
Moreover, $F$ restricts to $f\sqcup f'$ on $M\sqcup_{\partial M} M'$.
\end{enumerate}

Then we have
\begin{equation}\label{form difference formila}
	\text{Ind}(\theta)-\text{Ind}(\theta')=\text{Ind}(\theta\sqcup \theta')-\text{Ind}(\theta' \sqcup \theta').
\end{equation}
\end{theorem}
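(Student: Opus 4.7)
The strategy is to exploit the direct-sum structure coming from disjoint unions: both indices on the right-hand side are computed on Hilbert spaces that decompose orthogonally, and the corresponding operators should decompose as block-diagonal sums. My plan is to fix geometric data on $\theta$ and $\theta'$ that is compatible on the common boundary, verify that the perturbations $S_f$, $S'_f$ for $\theta\sqcup\theta'$ split as direct sums, and invoke additivity of $K$-theory classes under orthogonal decompositions.

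First, since $\partial M = \partial M'$, $\partial N = \partial N'$ and $\partial f = \partial f'$, $\phi = \phi'$, $\psi = \psi'$ on the common boundary, I would choose a single Hilsum--Skandalis submersion for $\partial f = \partial f'$, a single homotopy inverse $\partial g$, and identical parameters $N_0,N_1,N_2,N_3$ and smooth interpolating functions on the cylindrical ends. With these choices, the data used to build $S_{\partial f \times \mathbb{R}}$ and $S'_{\partial f \times \mathbb{R}}$ in Subsection~\ref{subsection M infty and N infty} depend only on the common boundary data, so the contributions on the cylindrical ends of $\theta$ agree with those on $\theta'$. The cobordism $(W,V,F)$ guarantees that such compatible choices exist globally, and it also allows me to take the same $\alpha$ in Proposition~\ref{proposition almost invertible} for $\theta$, $\theta'$, and their disjoint unions.

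Second, for the disjoint union $\theta \sqcup \theta'$ (meaning $\theta \sqcup (-\theta')$), the underlying Hilbert space decomposes as the orthogonal direct sum of those for $\theta$ and for $-\theta'$. Since the Hilsum--Skandalis operator $T_{f\sqcup f'}$ restricts to $T_f$ on each summand, and orientation reversal on $\theta'$ simply flips the sign of $S_{\tilde M'_\infty}$ and $S_{\tilde N'_\infty}$, both $D$ and the perturbations $S_f$, $S'_f$ attached to $\theta \sqcup \theta'$ are block diagonal with respect to this decomposition. Therefore the invertible operator $(D+\alpha S_f)(D-\alpha S'_f)^{-1}$ in the odd case, respectively the projection difference $[P_+(D+\alpha S_f)]-[P_+(D-\alpha S'_f)]$ in the even case, equals the direct sum of the corresponding operators for $\theta$ and for $-\theta'$, so in $K_n(C^*(\tilde X)^G)$ one obtains
\[
\mathrm{Ind}(\theta \sqcup \theta') = \mathrm{Ind}(\theta) + \mathrm{Ind}(-\theta').
\]
The identical argument applied to $\theta'\sqcup\theta'$ gives $\mathrm{Ind}(\theta'\sqcup\theta') = \mathrm{Ind}(\theta') + \mathrm{Ind}(-\theta')$, and subtracting these two identities yields Eq.~(\ref{form difference formila}).

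The main obstacle is to make the block-diagonal claim fully rigorous at the level of the perturbations $S_f$, $S'_f$. A priori the constructions in Subsection~\ref{subsection M infty and N infty} depend on many auxiliary choices (submersion, homotopy inverse, cutoff functions, the constant $\alpha$), and I must check that these can be made identically on the common cylindrical end for $\theta$ and $\theta'$, so that the operators for $\theta \sqcup \theta'$ genuinely split. The cobordism hypothesis enters here by furnishing a single coherent choice of these data across all four manifolds at once. A secondary technical point is carefully tracking the signs introduced by passing from $\theta'$ to $-\theta'$, which reverses $S_{\tilde M'_\infty}$ and $S_{\tilde N'_\infty}$ but leaves the de~Rham operator unchanged; once this is handled consistently, the class $\mathrm{Ind}(-\theta')$ appears symmetrically on both terms of the right-hand side and cancels in the subtraction, leaving exactly $\mathrm{Ind}(\theta) - \mathrm{Ind}(\theta')$.
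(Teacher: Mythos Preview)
Your argument is correct for the formula as stated: if $\theta\sqcup\theta'$ denotes the literal disjoint union, then the Hilbert space, the de Rham operator, and (with compatible boundary choices) the perturbations $S_f$, $S'_f$ are all block diagonal, so the $K$-theory class splits as a sum and the identity follows by cancellation of $\mathrm{Ind}(-\theta')$. One small misattribution: the cobordism data in hypotheses (2) and (3) play no role in securing compatible choices on the cylindrical ends; hypothesis (1) alone suffices, and indeed your proof never touches $W$, $V$, or $F$.

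The paper, however, takes a genuinely different route. Rather than invoking block-diagonality on the disjoint union, it realizes $\mathrm{Ind}(\theta\sqcup\theta')$ on the \emph{glued closed manifold} $M_R\sqcup_{\partial M}M'_R$ (and its $N$-counterpart) via a cutting-and-pasting argument: one chooses representatives $a,a'$ of $\mathrm{Ind}(\theta),\mathrm{Ind}(\theta')$ that are trivial past level $K$ on the cylindrical tail, transplants them as a block operator $A$ on $H_1\oplus H_2\oplus H_2\oplus H'_1$, and uses an almost-flat partition of unity $\phi+\phi'=1$ to identify $[A]$ with the class of $(D_\sqcup+S_{\sqcup,f})(D_\sqcup-S'_{\sqcup,f})^{-1}$ up to a controllably small error. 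This is more laborious than your direct-sum argument, but it buys something essential for the paragraph immediately following the theorem: it exhibits $\mathrm{Ind}(\theta\sqcup\theta')$ as the higher signature of a \emph{closed} manifold which is exactly the boundary of the cobordism $(W,V)$, so bordism invariance of the higher signature forces the right-hand side of Eq.~(\ref{form difference formila}) to vanish in $K_n(C^*(\tilde X)^G)\otimes\mathbb{Z}[\tfrac12]$. Your approach proves the displayed formula cleanly, but leaves you still owing a separate ``disjoint-union index $=$ glued closed-manifold signature'' step before bordism invariance can be invoked in the well-definedness argument.
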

\begin{prf}
We go through details for odd case only, the even case are totally the same.

For real numbers $T$ and $T_1 \leq T_2$, we denote $M_T:=M\sqcup_{\partial M} \partial M \times [0,T]$, $M_{T\geq}:=\partial M\times [T, \infty)$, and $M_{[T_1, T_2]}:=\partial M \times [T_1,T_2]$.
Similar, for $M'$, $N$ and $N'$, we introduce the same notation.

Take a real number $R$, we construct $M_R \sqcup_{\partial M} M'_R$ by pasting along $\partial M \times \{R\}\subset M\sqcup_{\partial M}\partial M\times [0,R]$ and $\partial M' \times \{R\}\subset M' \sqcup_{\partial M'}\partial M' \times [0,R]$.
Similar, for $N$ and $N'$, we introduce the same notation.
Without loss of generality, we can identify $M\sqcup_{\partial M} M'$ with $M_R\sqcup_{\partial M\times R} M'_R$, and identify $N\sqcup_{\partial N} N'$ with $N_R\sqcup_{\partial N\times R} N'_R$.

For real number $R\geq K$.
We will introduce several Hilbert spaces
\begin{enumerate}
\item $H_1$ stands for $L^2(\Lambda( M_K\sqcup N_K)$ and $H'_1$ stands for $L^2(\Lambda(M'_K\sqcup N'_K))$;
\item $H_2$ stands for $L^2(\Lambda(M_{[K,R]}\sqcup N_{[K,R]}))$ as well as $L^2(\Lambda(M'_{[K,R]}\sqcup N'_{[K,R]}))$;
\item $H_3$ stands for $L^2(\Lambda(M_{R\geq}\sqcup N_{R\geq}))$ as well as $L^2(\Lambda(M'_{R\geq}\sqcup N'_{R\geq}))$.
\end{enumerate}
With the above notation we have
\begin{enumerate}
\item $L^2(\Lambda(M_{\infty} \sqcup N_{\infty})) = H_{1}\oplus H_{2} \oplus H_{3} $;
\item $L^2(\Lambda(M'_{\infty}  \sqcup N'_{\infty})) = H'_{1} \oplus H_{2}\oplus H_{3} $;
\item $L^2(\Lambda((M_R\sqcup_{\partial M} M'_R) \sqcup (N_R \sqcup_{\partial N} N'_R)))=H_1 \oplus H_2 \oplus H_2 \oplus H'_1 $;
\item $L^2(\Lambda((M'_R\sqcup_{\partial M'} M'_R) \sqcup (N'_R\sqcup_{\partial N'} N'_R)))=H'_1 \oplus H_2 \oplus H_2 \oplus H'_1$.
\end{enumerate}

Choose a representative element of the $K$-theory class $\text{Ind}(\theta)$, denote it by $a$, with finite propagation.
Moreover, by section \ref{section Infinitesimal is a good zero} we can additional suppose $[1-a]$ is trivial on $ \tilde{X}\times [K,\infty)$.
Similarly, for $\text{Ind}(\theta')$ we have $a'$.

Define operators on $H_1 \oplus H_2 \oplus H_2 \oplus H'_1 $ and $H'_1 \oplus H_2 \oplus H_2 \oplus H'_1$ by
\[
A=
\begin{pmatrix}
a & 0 &0 & 0\\
0 & 0 & 0 & 0\\
0 & 0 & 0 & 0\\
0 & 0 & 0 & a'
\end{pmatrix}\begin{pmatrix}
H_1\\
H_2\\
H_2\\
H'_1
\end{pmatrix}\text{~and~}
A'=\begin{pmatrix}
a' & 0 &0 & 0\\
0 & 0 & 0 & 0\\
0 & 0 & 0 & 0\\
0 & 0 & 0 & a'
\end{pmatrix}\begin{pmatrix}
H'_1\\
H_2\\
H_2\\
H'_1
\end{pmatrix}.
\]	
Then, obviously we have that
\[
\text{Ind}(\theta)-\text{Ind}(\theta')=[A]-[A'].
\]

We claim that

\[[A]=\text{Ind}(\theta\sqcup \theta').\]
Let $R'=2(R-K)$.
In fact, we can assume that $R\geq 2K$ and $K$ is big enough, such that there is almost flat function $\phi$ and $\phi'$ satisfies
\begin{enumerate}
\item $\phi$ is constantly 1 on $M_{K}\sqcup M'_{K}$, constantly zero on $M_{R'\geq}\sqcup M'_{R'\geq}$;
\item $\phi'$ is constantly 1 on $N_{K}\sqcup N'_{K}$, constantly zero on $N_{R'\geq}\sqcup N'_{R'\geq}$;
\item Both $\phi$ and $\phi'$ are constant along boundary direction;
\item We have $\phi +\phi'=1$ on $H_1 \oplus H_2 \oplus H_2 \oplus H'_1 $.
\end{enumerate}
Thus we have
\begin{align*}
A=&\begin{pmatrix}
a & 0 &0 & 0\\
0 & 0 & 0 & 0\\
0 & 0 & 0 & 0\\
0 & 0 & 0 & 0
\end{pmatrix}\phi + \begin{pmatrix}
0 & 0 &0 & 0\\
0 & 0 & 0 & 0\\
0 & 0 & 0 & 0\\
0 & 0 & 0 & a'
\end{pmatrix}\phi'\\
=&
\sqrt{\phi}\begin{pmatrix}
a & 0 &0 & 0\\
0 & 0 & 0 & 0\\
0 & 0 & 0 & 0\\
0 & 0 & 0 & 0
\end{pmatrix}\sqrt{\phi}+\sqrt{\phi'}\begin{pmatrix}
0 & 0 &0 & 0\\
0 & 0 & 0 & 0\\
0 & 0 & 0 & 0\\
0 & 0 & 0 & a'
\end{pmatrix}\sqrt{\phi'}+\epsilon	
\end{align*}
here $\epsilon$ stands for an operator with sufficient small norm.
Actually, we can substitute the following matrixes
\[
\begin{pmatrix}
a & 0 &0 & 0\\
0 & 0 & 0 & 0\\
0 & 0 & 0 & 0\\
0 & 0 & 0 & 0
\end{pmatrix}\text{~and~}\begin{pmatrix}
0 & 0 &0 & 0\\
0 & 0 & 0 & 0\\
0 & 0 & 0 & 0\\
0 & 0 & 0 & a'
\end{pmatrix}
\]
by $\tilde{a}\in B(L^2(M_\infty\sqcup N_\infty))$ and $\tilde{a}'\in B(L^2(M'_\infty\sqcup N'_\infty))$ respectively.

Since
\[
\|(D+S_f)(D-S'_f)^{-1}-\tilde{a}\|\leq \eta,\;\|(D'+S'_f)(D'-S'_f)^{-1}-\tilde{a}'\|\leq \eta.
\]
Hence, we have that $[1-A]$ and $\sqrt{\phi}(D+S_f)(D-S'_f)^{-1}\sqrt{\phi}+ \sqrt{\phi'}(D'+S_{f'})(D-S'_{f'})^{-1}\sqrt{\phi'}$ give the same $K$-theory class.

Let $D_\sqcup$ be the signature operator on $(M\sqcup M') \sqcup (N\sqcup N')$.
We can define $S_{\sqcup,f}$ and $S'_{\sqcup,f}$ as
\[
S_{\sqcup,f}=S_f \sqcup S_{f'} \text{~and~} S'_{\sqcup,f}=S'_f \sqcup S'_{f'}
\]

Now one can see that
\begin{align*}
& \sqrt{\phi}(D_\sqcup+S_{\sqcup,f})(D_\sqcup-S'_{\sqcup,f})^{-1}\sqrt{\phi}+ \sqrt{\phi'}(D_\sqcup+S_{\sqcup,f})(D_\sqcup-S'_{\sqcup,f})^{-1}\sqrt{\phi'}\\
=& (D+S_f)(D-S'_f)^{-1}\phi+ (D'+S_{f'})(D'-S'_{f'})^{-1}\phi'+\epsilon,	
\end{align*}
where $\epsilon$ is an operator with sufficiently small norm.
Thus the Theorem follows.
\end{prf}

Now, to show that map $\text{Ind}: L_n (\pi_1 X)\to K_n(C^*(\tilde{X})^G)$ is well defined,
it remains only to recall that higher signature is bordism invariant.

%
%

Let $\theta$ be zero element in $L_n(\pi_1 X)$, recall Definition \ref{definition eL} or \cite[Definition 3.10]{WXY16}.
We take $\theta' =\{\partial_2 W,\partial M,\Phi_{\partial M}, \partial_2 V, \partial N ,\Psi|_{\partial_2 V},F|_{\partial_2 W}\}$, here various terms, such as $\partial_2 W$ and $\partial_2 V$, take from Definition \ref{definition eL}.
Obviously, $\theta'\in L_n(\pi_1 X)$, and by Theorem \ref{theorem homotopy trivial L}, we have $\text{Ind}(\theta')=0$.
Hence, the second item in the left side of Eq.(\ref{form difference formila}) is trivial in $K_n(C^*(\tilde{X})^G)\otimes \mathbb{Z}[\frac{1}{2}]$.
However, since higher signature is a bordism invariant \cite{H1991a}, two items in right side of Eq.(\ref{form difference formila}) are both trivial in $K_n(C^*(\tilde{X})^G)\otimes \mathbb{Z}[\frac{1}{2}]$.

Then from Eq.(\ref{form difference formila}) we can see that
\[
\text{Ind}: L_n(\pi_1 X)\to K_n(C^*(\tilde{X})^G)\otimes \mathbb{Z}[\frac{1}{2}],
\]
maps zero element to zero element. Thus $\text{Ind}$ is a well defined group homomorphism.

\begin{remark}\label{remark twist 2}
In a word, we have to consider $\mathbb{Z}[\frac{1}{2}]$ coefficient since the ``boundary of signature operator" is not signature, but twice the signature when the manifold is even dimensional.
\end{remark}

\subsection{A product formula}\label{subsection X times R}
To conclude this section, we present a product formula.
The proof is postpone to Appendix \ref{subsection X times R proof}.
As our proof is essentially the same as \cite[Appendix D]{WXY16}, we will go through details for even case only.

If $\theta=(M,\partial M,\phi, N, \partial N,\psi, f) \in L_n(\pi_1 X)$, let $ \theta \times  \mathbb{R}$ be the product of $\theta $ and $\mathbb{R}$, which defines an element in $L_{n+1}(\pi_1 X)$.
Here various undefined terms take the obvious meanings \cite{WXY16}.

Note that the construction in subsection \ref{subsection M infty and N infty} also applies to $\theta \times  \mathbb{R}$ and defines a $K$-theory class in $K_{n+1}(C^*(\tilde{X}\times  \mathbb{R} )^G)$.
And there is a natural homomorphism $\alpha : C^*(\tilde{X} )^G \otimes  C^*_L(\mathbb{R}) \to C^*(\tilde{X}\times  \mathbb{R} )^G$.

\begin{proposition}\label{proposition times proposition}
With the same notation as above, we have
\[
k_n \cdot \alpha_*\left(\text{Ind}(\theta)\otimes \text{Ind}_L(\mathbb{R})\right)=\text{Ind}(\theta \times  \mathbb{R})
\]
in $K_{n+1}(C^*(\tilde{X}\times \mathbb{R} )^G)$, where $\text{Ind}_L(\mathbb{R})$ is the $K$-homology class of the signature operator on $\mathbb{R}$ and $k_n=1$ if $n$ is even and $2$ if $n$ is odd.
\end{proposition}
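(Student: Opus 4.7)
The plan is to reduce the statement to a standard external-product formula in $K$-theory via the tensor-product decomposition
\[
L^2(\Lambda(\tilde{M}_\infty \times \mathbb{R})) \;\cong\; L^2(\Lambda(\tilde{M}_\infty)) \,\widehat{\otimes}\, L^2(\Lambda(\mathbb{R})),
\]
and similarly for $\tilde{N}_\infty \times \mathbb{R}$. Under this identification the de Rham operator decomposes as $d_{\tilde{M}_\infty \times \mathbb{R}} = d_{\tilde{M}_\infty} \otimes 1 + \varepsilon \otimes d_\mathbb{R}$, where $\varepsilon$ is the form-degree grading, and $S_{\tilde{M}_\infty \times \mathbb{R}}$ becomes a graded product of $S_{\tilde{M}_\infty}$ with $S_\mathbb{R}$. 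By choosing the product submersion in the Hilsum--Skandalis construction of subsection~\ref{ssec.HSsubmersion}, one obtains $T_{f \times \mathrm{id}} = T_f \otimes \mathrm{id}$, so the almost-Poincar\'e duality operators $S_{f \times \mathrm{id}}$ and $S'_{f \times \mathrm{id}}$ associated to $\theta \times \mathbb{R}$ decompose as graded tensor products of $S_f$, $S'_f$ with the corresponding operators on $\mathbb{R}$.

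The second step is to rewrite the representative of $\text{Ind}(\theta \times \mathbb{R})$ (either the invertible operator or the difference of positive projections, depending on parity) as an external product of the representative of $\text{Ind}(\theta)$ with the representative of $\text{Ind}_L(\mathbb{R})$, modulo norm-small perturbations controlled by the integral identities already used in Lemma~\ref{lemma odd functional cal for perturb} and Lemma~\ref{lemma even functional cal for perturb}. Since $\alpha_*$ is the natural external-product homomorphism on $K$-theory, this identification produces exactly the class $\alpha_*(\text{Ind}(\theta) \otimes \text{Ind}_L(\mathbb{R}))$ in $K_{n+1}(C^*(\tilde{X} \times \mathbb{R})^G)$.

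The constant $k_n$ emerges from a parity bookkeeping at the level of representatives. When $n$ is even, both indices are built using the full exterior algebra and the external product matches on the nose, giving $k_n = 1$. When $n$ is odd, the representative of $\text{Ind}(\theta)$ is constructed only on even forms; the tensor product with $L^2(\Lambda(\mathbb{R}))$ then picks up contributions from both the even$\otimes$even and odd$\otimes$odd sectors of the product complex, each producing the same class in $K_{n+1}$, which accounts for the multiplicative factor $2$.

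The main difficulty is the bookkeeping along the cylindrical ends: the operators $S_f$, $S'_f$ are defined piecewise on $\partial M \times [N_0, \infty)$ through an $N_i$-dependent homotopy of Poincar\'e duality operators, and one must verify that this piecewise construction is compatible with the tensor-product decomposition so that no spurious boundary contribution enters the $K$-theory class. This is the analogue of the verification carried out in \cite[Appendix D]{WXY16} in the bounded Hilbert--Poincar\'e complex setting, and the strategy will be to adapt those arguments to the unbounded setup by combining the invertibility estimates from Lemma~\ref{lemma almost invertible} with the compact-perturbation calculations used in the previous subsections to absorb all $t \to \infty$ error terms.
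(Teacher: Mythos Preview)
Your outline shares the first step with the paper---the tensor decomposition of $L^2(\Lambda(\tilde{M}_\infty \times \mathbb{R}))$ and the observation that the almost-Poincar\'e operators on the product can be taken as $S_f \otimes 1$, $S'_f \otimes 1$---but the computational heart of your proposal diverges from the paper's proof. You plan to reach the external product ``modulo norm-small perturbations controlled by the integral identities'' of Lemmas~\ref{lemma odd functional cal for perturb} and~\ref{lemma even functional cal for perturb}. The paper does not use those integral formulas here at all. Instead, after identifying $\Lambda^1(\mathbb{R}) \cong \Lambda^0(\mathbb{R})$ so that $d_\mathbb{R}$ becomes skew-adjoint and the product operator reads $(D+\alpha S_f)\otimes 1 - 1\otimes i D_\mathbb{R}$, the paper homotopes each invertible self-adjoint factor $(D\pm\alpha S_f^{(\prime)})$ to its \emph{sign}, $P_+ - P_-$, and then performs a direct algebraic manipulation with the projections $P_\pm$, $P'_\pm$ to factor the resulting invertible as $([P_+]-[P'_+])\otimes [(D_\mathbb{R}+i)(D_\mathbb{R}-i)^{-1}]$, which is precisely $\text{Ind}(\theta)\otimes \text{Ind}_L(\mathbb{R})$.

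This sign-function homotopy plus projection algebra is the key idea you are missing; it replaces your proposed analytic perturbation argument with a finite, explicit $K$-theory computation and makes the ``bookkeeping along the cylindrical ends'' that worries you essentially automatic (since the whole operator $D+\alpha S_f$ is deformed at once, no separate boundary analysis is needed). Your integral-identity route is not obviously wrong, but it is not worked out, and it is unclear how you would extract the clean Cayley-transform factor $(D_\mathbb{R}+i)(D_\mathbb{R}-i)^{-1}$ from those resolvent estimates without eventually passing through the same projection identity. Also note a small imprecision: the product almost-Poincar\'e operators are simply $S_f\otimes 1$, not a nontrivial graded tensor with an operator on $\mathbb{R}$; the $\mathbb{R}$-factor enters only through $D_\mathbb{R}$ in the Dirac part.
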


\begin{remark}
We would like to remind readers that throughout this paper $\text{Ind}_L(\mathbb{R})$ is the $K$-homology class of the signature operator on $\mathbb{R}$.
Do NOT confuse with the map $\text{Ind}_L$ which we will defined in subsection \ref{section Infinitesimal is a good zero}.
\end{remark}

\section{Local index map for $\mathcal{N}_n(X)$}\label{Define for element of N}
In this section we address how to define the local index map $\text{Ind}_L: \mathcal{N}_n(X)\to K_n(C^*_L(\tilde{X})^G)$ and show it is well defined and admire additivity.

Before we started, we need a fair idea of a certain hybrid $C^*$-algebra, which introduced in \cite[Section 4.1]{WXY16}.
For the convenience of readers, we briefly recall some facts about hybrid $C^*$-algebras in subsection \ref{subsection CX}.

In subsection \ref{section Infinitesimal is a good zero}, using \emph{infinitesimal controlled homotopy equivalence}, we reduce the definition of local index map to the index map which we have carefully studied in section \ref{subsection Define for element of L}.

\subsection{$CX$ and hybrid $C^*$-algebra}\label{subsection CX}
Let $G$ be a countable discrete group, and let $X$ be proper metric space equipped with a proper $G$-action.

\begin{definition}[{\cite[Definition 4.1]{WXY16}}]
We define $C_c^*(X)^G$ to be the closed subalgebra of $C^*(X)^G$ generated by all elements $\alpha$ such that: for any $\epsilon>0$, there exists $G$-invariant cocompact set $K\subseteq Y$ such that the propagation of $\chi_{(X-K)}\alpha $ and $\alpha \chi_{(X-K)}$ are less than $\epsilon$.
\end{definition}

\begin{definition}[{\cite[Definition 4.2]{WXY16}}]
We define $C_{L,0,c}^*(X)^G$ to be the closed subalgebra of $C_{L,0}^*(X)^G$ generated by all elements $\alpha$ such that: for any $\epsilon>0$, there exists $G$-invariant cocompact set $K\subseteq Y$ such that the propagation of $\chi_{(X-K)}\alpha $ and $\alpha \chi_{(X-K)}$ are less than $\epsilon$.
\end{definition}

Let $X \times  [1, \infty)$ be the product space.
Let $\tilde{X}$ be the universal cover of $X$ and let $G = \pi_1 X$ be the fundamental group of $X$.
It is obvious that for any $r\geq 1$, the $C^*$ algebra $C^*_{L,0}(\tilde{X}\times [1,r];\tilde{X}\times [1,\infty))^G$ is a two sided ideal of $C^*_{L,0}(\tilde{X}\times [1,\infty))^G$.

\begin{proposition}
For $i=0,1$, we have
\begin{enumerate}
  \item $K_i(C^*_{L,0,c}(\tilde{X}\times [1,\infty))^G) \cong K_i(C_{L,0}^*(\tilde{X})^G)$ \cite[Proposition 4.4]{WXY16} ;
  \item $K_i(C^*_{L,c}(\tilde{X}\times [1,\infty))^G)= 0$ \cite[Lemma 4.6]{WXY16};
  \item $K_i(C_{c}^*(\tilde{X}\times [1,\infty))^G)\cong K_{i+1}(C^*_{L,0,c}(\tilde{X}\times [1, \infty))^G)$ \cite[Corollary 4.7]{WXY16}.
\end{enumerate}
\end{proposition}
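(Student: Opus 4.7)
\begin{prf}
The plan is to handle the three statements in the order (2), (3), (1), since the third depends on the second and the first is the most technical.

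For (2), I would run an Eilenberg swindle along the ray $[1,\infty)$. The cocompact-propagation condition (the ``$c$'' subscript) says exactly that, outside a $G$-cocompact set, the propagation of any element tends to $0$; this lets us translate an element a large distance along the second factor without leaving the algebra, and to form countable block-sum operators in the obvious way. The standard swindle argument -- identifying the identity on $K_*$ with $\mathrm{id}\oplus(\mathrm{id}\oplus\mathrm{id}\oplus\cdots)$ and then absorbing the identity into the infinite sum via the shift -- then shows that $K_i(C^*_{L,c}(\tilde X\times[1,\infty))^G)=0$.

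For (3), apply the six-term $K$-theory sequence to the short exact sequence
\[
0 \longrightarrow C^*_{L,0,c}(\tilde X\times[1,\infty))^G \longrightarrow C^*_{L,c}(\tilde X\times[1,\infty))^G \xrightarrow{\;\mathrm{ev}\;} C^*_c(\tilde X\times[1,\infty))^G \longrightarrow 0,
\]
where the quotient is the evaluation map at $t=1$ in the localization parameter. By (2) the middle term has vanishing $K$-theory, so both boundary maps are isomorphisms, giving $K_i(C^*_c(\tilde X\times[1,\infty))^G)\cong K_{i+1}(C^*_{L,0,c}(\tilde X\times[1,\infty))^G)$ as claimed.

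For (1), I would exhibit an explicit inverse pair of maps on $K$-theory. In one direction, the $G$-equivariant isometric inclusion $\tilde X\hookrightarrow \tilde X\times[1,\infty)$ as $\tilde X\times\{1\}$ induces (after the standard Voiculescu-type compatibility of $X$-modules) a $*$-homomorphism $C^*_{L,0}(\tilde X)^G \to C^*_{L,0,c}(\tilde X\times[1,\infty))^G$. In the other direction, one uses the ``cone'' structure of the $\tilde X\times[1,\infty)$ factor: an element $\alpha$ of $C^*_{L,0,c}(\tilde X\times[1,\infty))^G$ can be deformed, using the cocompact condition and a rescaling $s\mapsto s/r$ together with the localization parameter $t$, into a family supported near $\tilde X\times\{1\}$, producing a class in $K_i(C^*_{L,0}(\tilde X)^G)$. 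Checking that the two compositions are homotopic to the identity is the main technical step; the key estimate is that the cocompact-propagation condition ensures that rescaling in the $[1,\infty)$ direction stays inside the algebra, so that the obvious linear homotopies are valid. The main obstacle I expect is precisely this bookkeeping: verifying that the propagation and the localization continuity conditions are preserved throughout all the deformations. Once this is set up, the argument reduces to the one of \cite[Proposition 4.4]{WXY16} applied verbatim in our setting.
\end{prf}
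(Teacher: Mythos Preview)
The paper does not give its own proof of this proposition: it is stated as a collection of facts with citations to \cite[Proposition 4.4, Lemma 4.6, Corollary 4.7]{WXY16}, and no argument is supplied. Your sketch is therefore not competing with any proof in the paper; rather, it is an outline of the arguments behind the cited results.

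That said, your outline is correct and matches the standard approach in \cite{WXY16}: part (2) is an Eilenberg swindle along the $[1,\infty)$ direction (the hybrid condition is precisely what makes the shift stay inside the algebra), part (3) follows from the six-term sequence for the evaluation short exact sequence together with (2), and part (1) is the rescaling/contraction argument you describe. Your ordering (2)$\Rightarrow$(3), then (1) separately, is also the logical dependency in \cite{WXY16}. The only caveat is that for (1) your description of the inverse map is somewhat informal; in \cite{WXY16} the isomorphism is obtained by showing that the inclusion $\tilde X\times\{1\}\hookrightarrow \tilde X\times[1,\infty)$ induces an isomorphism via a homotopy that pushes the support toward the slice, using the hybrid condition to control propagation---essentially what you say, but the bookkeeping you flag as the ``main obstacle'' is indeed the entire content of that proof.
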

%
%

We now introduce the space $CX$, which is defined as a rescaling of $X\times[1,\infty)$ along $X$ for any $t\in[1,\infty)$ such that
\begin{enumerate}
\item For $x\in X$ and $t_1,t_2 \in[1,\infty)$, we have $d((x,t_1), (x, t_2))= |t_2-t_1 |$;
\item For $x_1, x_2 \in X$ and $t \in[1,\infty)$, we have $d((x_1,t), (x_2, t))$ is no decreasingly tends to infinity as $t$ tends to infinity.
\end{enumerate}

\begin{remark}
If $M$ is a closed Riemannian manifold, in $M\times[1,\infty)$ we equipped with a (complete) Riemannian metric of the form
\[
ds^2 =dt^2 +t^2g_{ij}dy^idy^j
\]
where $t$ is the coordinate on $[1,\infty)$, $g_{ij}$ is a Riemannian metric on $M$.
This metric will satisfied above conditions.
\end{remark}

Note that there is a proper continuous map
\[
\tau : CX\to  X \times [1, \infty),
\]
define by $\tau(x, t) = (x, t)$ which induces a $C^*$-algebra homomorphism
\[
 \tau_* : C^*(CX) \to C^*_c (X \times  [1,\infty)).
\]
Similarly, we have
\[
 \tau_* : C^*_{L,0} (CX) \to C^*_{L,0,c}  (X \times  [1,\infty)).
\]
There are also obvious $G$-equivariant versions of theory of hybrid $C^*$ algebra.

\subsection{A construction on infinitesimal controlled homotopy equivalence}\label{section Infinitesimal is a good zero}

Consider $\theta = (M,\partial M, \phi,N,\partial N,\psi,f) \in \mathcal{N}_n(X)$ (\cite[Definition 3.13]{WXY16} or Definition \ref{definition N}).
$CM, CN$ are obtained in the same way as $CX$.

For each $s\in [1,\infty)$ denote by $X_s$ the sub-manifold $X\times \{s\} \hookrightarrow CX$.
Do not confuse it with $X$ itself.
Moreover, for $m\in \mathbb{N}$, we should also differ $C^*(\sqcup_m X_m)$ from  $C^*(\sqcup_m X)$, since $\sqcup_m X_m$ is not coarsely equivalent to $\sqcup_m X$.
Similarly, we denote $M_s$ (resp. $N_s$) the sub-manifold $M\times \{s\} \hookrightarrow CM$ (resp. $N\times \{s\} \hookrightarrow CN$).  We also have $C^*(\sqcup_m M_m)$ (resp. $C^*(\sqcup_m N_m)$).

Let $\tilde{X}$ be the universal cover of $X$, then $\tilde{X}$ is a regular $G$-covering of $X$ where $G=\pi_1 X$ is the fundamental group of $X$.
Denote by $C\tilde{M}$ (resp. $C\tilde{N}$) the pull back $G$-covering induced by $\phi :M\to X$ (resp. $\psi :N\to X$).

Hence, for each $t\in[0,1]$ we have $\sqcup_m \tilde{M}_{m+t}$ and $\sqcup_m \tilde{N}_{m+t}$.
Let $d_{\sqcup_m \tilde{M}_{m+t}}$ (resp. $d_{\sqcup_m \tilde{N}_{m+t}}$) be the ($G$-equivariant) \emph{de Rham differential operator} on $\sqcup_m \tilde{M}_{m+t}$ (resp. $\sqcup_m \tilde{N}_{m+t}$).
Let $D_{\sqcup_m \tilde{M}_{m+t}}$ be $d_{\sqcup_m \tilde{M}_{m+t}}+d_{\sqcup_m \tilde{M}_{m+t}}^*$ and $D_{\sqcup_m \tilde{N}_{m+t}}$ be $d_{\sqcup_m \tilde{N}_{m+t}}+d_{\sqcup_m \tilde{N}_{m+t}}^*$.

Note that for $t\in[0,1]$, the ``boundary" of $\sqcup_m \tilde{M}_{m+t}$ (resp. $\sqcup_m \tilde{N}_{m+t}$) is $\sqcup_m \tilde{\partial M}_{m+t}$ (resp. $\sqcup_m \tilde{\partial N}_{m+t}$).
Since $f$ restricts to the boundary is an \emph{infinitesimally controlled homotopy equivalence} over $X$, $f$ induces a controlled homotopy equivalence between $\sqcup_m \tilde{\partial M}_{m+t}$ and $\sqcup_m \tilde{\partial N}_{m+t}$) over $\sqcup_m \tilde{X}_{m+t}$.

We set
\[
D_t:=\begin{pmatrix}
D_{\sqcup_m \tilde{M}_{m+t}} & 0\\
0 & D_{\sqcup_m \tilde{N}_{m+t}} 	
\end{pmatrix}\text{~and~}S_t:=\begin{pmatrix}
S_{\sqcup_m \tilde{M}_{m+t}} & 0\\
0 & -S_{\sqcup_m \tilde{N}_{m+t}}	
\end{pmatrix}.
\]

By subsection \ref{subsection M infty and N infty}, for any $t\in[0,1]$, we can define a relative index $\text{Ind}(D_t)$ in $K_n(C^*(\sqcup_m \tilde{X}_{m+t})^G)$.
Although we do not known whether there exist a path connected $\text{Ind}(D_t)$ and the trivial element in $K_n(C^*(\sqcup_m X_{m+t})^G)$, we can define a $K$-theory class in $K_n(C_L^*(\tilde{X})^G)$.
We denote this $K$-theory class by $\text{Ind}_L(\theta)$.

\begin{remark}
In \cite{Y97} Yu proved that $K_*(C_L^*(\tilde{X})^G)$ is isomorphic to $K$-homology $K_*(X)$ of $X$.
Let $M$ and $N$ be two compact manifolds, and let $G$ be the fundamental group of $X$.
Consider $\theta = (M,\emptyset , \phi,N,\emptyset ,\psi,f)\in \mathcal{N}_n(X)$.
Under the isomorphism constructed in \cite{Y97}.
It is not hard to see that $\text{Ind}_L(\theta)=\phi_*([D_{\tilde{M}}])-\psi_*([D_{\tilde{N}}])$, where $[D_{\tilde{M}}]$ (resp. $[D_{\tilde{N}}]$) is the $K$-homology class of signature operator on $\tilde{M}$ (resp. $\tilde{N}$).
\end{remark}

Now, we will show that the map $\text{Ind}_L: \mathcal{N}_n(X)\to K_n(C^*_L(\tilde{X})^G)$ is well defined.
Analogous to the Theorem \ref{theorem homotopy trivial L} and Theorem \ref{theorem L differ} in subsection \ref{subsection Define for element of L}, we have the following two theorems.

\begin{theorem}\label{thm.infinitesimal homotopy trivial N}
Suppose $\theta=(M,\partial M,\phi, N, \partial N,\psi, f) \in \mathcal{N}_n(X)$.
And $f: M \to N$ is an \emph{infinitesimally controlled homotopy equivalence} over $X$, then $\text{Ind}_L(\theta)=0$.
\end{theorem}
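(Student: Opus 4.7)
My plan is to adapt the proof of Theorem \ref{theorem homotopy trivial L} to a scale-dependent family, in the spirit of the hybrid $C^*$-algebras from subsection \ref{subsection CX}. Recall that $\text{Ind}_L(\theta)$ is built from the family of relative indices $\text{Ind}(D_t)\in K_n(C^*(\sqcup_m\tilde{X}_{m+t})^G)$ obtained by applying the construction of subsection \ref{subsection M infty and N infty} to the scaled disjoint unions $\sqcup_m\tilde{M}_{m+t}$ and $\sqcup_m\tilde{N}_{m+t}$. The hypothesis that $f$ is an infinitesimally controlled homotopy equivalence over $X$ means exactly that at each scale $m+t$ the map $f$ induces a controlled homotopy equivalence $f_{m+t}:\tilde{M}_{m+t}\to\tilde{N}_{m+t}$ whose control constant tends to zero as $m\to\infty$, together with a homotopy inverse $g_{m+t}$ and Hilsum--Skandalis operators $T_{f_{m+t}}$, $T_{g_{m+t}}$ of propagation $c_m\to 0$.

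At each fixed $(m,t)$, the argument of Theorem \ref{theorem homotopy trivial L} can then be applied directly to the full map $f_{m+t}$, not merely to its boundary restriction. Because $f_{m+t}$ is itself a controlled homotopy equivalence, the connecting path of Poincar\'e duality operators displayed in Eq.~(\ref{eq.1}) extends from the cylindrical end to the entire manifold, yielding an explicit trivialization of $\text{Ind}(D_t)$ inside $K_n(C^*(\sqcup_m\tilde{X}_{m+t})^G)$ via the formulas of subsection \ref{subsection Homotopy invariance of higher signature}. The next step is to arrange $T_{f_{m+t}}$, $T_{g_{m+t}}$, and the associated homotopy operators $\mathscr{Y}_{m+t}$ from property~(4) of subsection \ref{ssec.HSsubmersion} to depend smoothly on $t$ with propagation bounded by $c_m$. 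The resulting path of invertibles (odd case) or of projection differences (even case) then lives in $C^*_{L,0,c}(\tilde{X}\times[1,\infty))^G$, and under the isomorphism recalled in subsection \ref{subsection CX} it witnesses the vanishing of $\text{Ind}_L(\theta)$ in $K_n(C_L^*(\tilde{X})^G)$.

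The main obstacle I anticipate is the uniformity of constants. One must verify that the bounds $K$ and smallness threshold $\epsilon$ governing Lemma \ref{lemma almost invertible cite} and Lemma \ref{lemma almost invertible}, as well as the parameter $\alpha>0$ in Proposition \ref{proposition almost invertible}, can be chosen uniformly in $(m,t)$. This is precisely where the infinitesimal control hypothesis is consumed: it delivers small propagation of all the ingredients simultaneously while keeping their operator norms uniformly bounded. Granted such uniformity, the pencils $D_t\pm\alpha S_{f_{m+t}}$ and $D_t\pm\alpha S'_{f_{m+t}}$ remain invertible along the entire trivializing path, and the fiberwise argument of Theorem \ref{theorem homotopy trivial L} assembles continuously in $t$ into the desired nullhomotopy of $\text{Ind}_L(\theta)$.
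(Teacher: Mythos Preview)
Your proposal is correct and follows essentially the same approach as the paper. The paper does not give an explicit proof of this theorem; it simply states that the result is ``analogous to Theorem \ref{theorem homotopy trivial L}'' and leaves the details to the reader, which is precisely the fiberwise adaptation you have written out (applying the trivializing path of subsection \ref{subsection Homotopy invariance of higher signature} at each scale $m+t$, using infinitesimal control to force the propagation to zero and thereby land in the localization algebra).

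One small notational slip: the trivializing family you construct should be viewed as a homotopy inside $C^*_{L,c}(\tilde{X}\times[1,\infty))^G$ (or directly in $C^*_L(\tilde{X})^G$), not in $C^*_{L,0,c}$; the latter is the kernel of evaluation and is where the higher rho invariants of Section \ref{section Mapping surgery to analysis} live, not where $\text{Ind}_L$ takes values. This does not affect the substance of your argument.
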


Recall, the addition of the abelian group $\mathcal{N}_n(X)$ is given by disjoint union(\cite[Definition 3.14]{WXY16}, Definition \ref{definition eN}).
Thus for any two elements $\theta,\theta'\in\mathcal{N}_n(X)$, we have $\theta\sqcup \theta'$,  the disjoint union of $\theta$ and $-\theta'$, belongs to $\mathcal{N}_n(X)$.

\begin{theorem}\label{theorem N differ}
Let $\theta, \theta' \in \mathcal{N}_n(X)$. 
We further suppose $\theta =(M, \partial M, \phi, N,\partial N,\psi, f)$, $\theta'=(M', \partial M', \phi', N',\partial N', \psi', f')$ and satisfies the following conditions:
\begin{enumerate}
\item $\partial M=\partial M'$,\;$\partial N=\partial N'$, restricting to the boundary $f=f'$,\;$\phi=\phi'$ and $\psi=\psi'$;
\item There exist two manifolds with boundary $(W, \partial W)$ and $(V, \partial V)$, continuous maps $\Phi:W \to X$(resp. $\Psi:V \to X$).
Manifolds $W$ and $V$ both of dimension $n+1$.
Moreover, $\partial W = M\sqcup_{\partial M} M'$(resp. $\partial V = N\sqcup_{\partial N} N'$) and $\Phi$(resp. $\Psi$) restricts to $\phi\sqcup \phi'$(resp. $\psi\sqcup \psi'$) on $M\sqcup_{\partial M} M'$(resp. $N\sqcup_{\partial N} N'$);
\item There exist a degree one normal map $F:W\to V$ such that $\Psi \circ F=\Phi$.
Moreover, $F$ restricts to $f\sqcup f'$ on $M\sqcup_{\partial M} M'$.
\end{enumerate}
we have
\begin{equation}\label{form N differ}
\text{Ind}_L(\theta)-\text{Ind}_L(\theta')=\text{Ind}_L(\theta\sqcup \theta')-\text{Ind}_L(\theta'\sqcup\theta').
\end{equation}
\end{theorem}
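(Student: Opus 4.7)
The plan is to run the argument of Theorem \ref{theorem L differ} slice-by-slice in the parameter $t \in [1,\infty)$ used to define the localization, and verify that the resulting constructions assemble into continuous paths with propagation tending to $0$. For each $t$, the slices $\sqcup_m \tilde{M}_{m+t}$ and $\sqcup_m \tilde{N}_{m+t}$ together with the operators $(D_t, S_t)$ from subsection \ref{section Infinitesimal is a good zero} play the role of the manifolds with cylindrical ends in subsection \ref{subsection M infty and N infty}. First I would fix, uniformly in $t$, finite-propagation representatives $a(t)$ of $\text{Ind}(D_t^{\theta})$ and $a'(t)$ of $\text{Ind}(D_t^{\theta'})$, chosen so that $1-a(t)$ and $1-a'(t)$ are supported away from the boundary region of the slice (so they are ``trivial at infinity'' along the boundary cylinder direction, as in the section \ref{section Infinitesimal is a good zero} construction); this is the local analogue of the first paragraph of the proof of Theorem \ref{theorem L differ}.

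Next I would introduce, for each $t$, the same Hilbert-space decomposition $H_1(t) \oplus H_2(t) \oplus H_2(t) \oplus H'_1(t)$ obtained from cutting the cylindrical ends of $\sqcup_m \tilde M_{m+t} \sqcup \tilde N_{m+t}$ and $\sqcup_m \tilde M'_{m+t} \sqcup \tilde N'_{m+t}$ at heights $K(t) \le R(t)$. Defining
\[
A(t) = \begin{pmatrix} a(t) & 0 & 0 & 0 \\ 0 & 0 & 0 & 0 \\ 0 & 0 & 0 & 0 \\ 0 & 0 & 0 & a'(t) \end{pmatrix}, \qquad
A'(t) = \begin{pmatrix} a'(t) & 0 & 0 & 0 \\ 0 & 0 & 0 & 0 \\ 0 & 0 & 0 & 0 \\ 0 & 0 & 0 & a'(t) \end{pmatrix},
\]
we immediately get $[A(\cdot)] - [A'(\cdot)] = \text{Ind}_L(\theta) - \text{Ind}_L(\theta')$ in $K_n(C^*_L(\tilde X)^G)$ by the same block-diagonal argument.

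The heart of the proof is then to identify $[A(\cdot)]$ with $\text{Ind}_L(\theta \sqcup \theta')$. For this I would pick, for each $t$, slowly varying cut-off functions $\phi(t), \phi'(t)$ with $\phi(t) + \phi'(t) = 1$ supported on the primed and unprimed halves of $M_R(t)\sqcup_{\partial M\times R(t)} M'_{R(t)}$ (resp. for $N$), constant along the boundary direction, and rewrite
\[
A(t) = \sqrt{\phi(t)}\,\tilde a(t)\,\sqrt{\phi(t)} + \sqrt{\phi'(t)}\,\tilde a'(t)\,\sqrt{\phi'(t)} + \varepsilon(t),
\]
where $\tilde a(t), \tilde a'(t)$ are the slicewise perturbed signature operators $(D+\alpha S_f)(D-\alpha S'_f)^{-1}$ (or the projection pair) associated to $\theta \sqcup \theta'$ and $\theta' \sqcup \theta'$ at parameter $t$, and $\varepsilon(t)$ is uniformly small. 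This is precisely the local version of the final display in the proof of Theorem \ref{theorem L differ}, and gives Eq.(\ref{form N differ}) once we observe that $\sqrt{\phi}\tilde a\sqrt{\phi} + \sqrt{\phi'}\tilde a'\sqrt{\phi'}$ is a representative of $\text{Ind}_L(\theta \sqcup \theta')$.

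The main obstacle I anticipate is bookkeeping the scales. The local algebra $C^*_L(\tilde X)^G$ requires paths whose propagation goes to $0$; the rescaling built into $CX$ means that the intrinsic propagation of slice operators shrinks like $1/t$, but we must simultaneously take $K(t), R(t) \to \infty$ (to accommodate the cylindrical cut-offs needed for the $L$-group construction) and choose $\phi(t), \phi'(t)$ with derivatives small in the rescaled metric so that $\varepsilon(t) \to 0$. These three scales must be coordinated so that the assignment $t \mapsto A(t)$ is uniformly norm-continuous and of asymptotically vanishing propagation; once this is verified, the rest of the argument is a transcription of the proof of Theorem \ref{theorem L differ} with the parameter $t$ carried along.
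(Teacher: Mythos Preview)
Your approach is essentially what the paper does: it states Theorem \ref{theorem N differ} as the direct analogue of Theorem \ref{theorem L differ} and gives no separate proof, relying on running the same block-matrix and cut-off argument with the localization parameter carried along. Your identification of the scale-coordination issue (making $K(t),R(t)$ grow while the rescaled metric on $CX$ forces propagation to shrink and $\varepsilon(t)\to 0$) is exactly the extra bookkeeping needed; note only that in the paper's parameterization the path variable is $t\in[0,1]$ applied to the disjoint union $\sqcup_m \tilde M_{m+t}$ over $m\in\mathbb N$, so the shrinking propagation comes from the index $m\to\infty$ rather than from a single $t\to\infty$, but this is a cosmetic reindexing of the same construction.
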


For the same reason as we mentioned in Remark \ref{remark twist 2}, we have
\[
\text{Ind}_L: \mathcal{N}_n(X)\to K_n(C_L^*(\tilde{X})^G)\otimes \mathbb{Z}[\frac{1}{2}],
\]
is a well defined group homomorphism.

\section{Higher rho invariant map for $L_n(\pi_1 X,X)$}\label{section Mapping surgery to analysis}
Let $X$ be a closed oriented topological manifold of dimension $n\geq 5$. 
With the technical and results of Section \ref{subsection Define for element of L} and Section \ref{Define for element of N}, we are now in a position to define ``Mapping surgery to analysis" and to prove the additivity of higher rho map (Eq.(\ref{eq.higherrhomap}) and Theorem \ref{theorem mapping sur t ana}).

For our purposes, we need to seek help from $L_{n+1} (\pi_1 X, X)$.
Hence, before defining the higher rho map, it is helpful to investigate the index map $\hat{\rho} :L_{n+1}(\pi_1 X, X)\to K_{n+1}(C^*_c(\tilde{X}\times [1,\infty)^G)$ in the coming subsection.

\subsection{Index of $L_n(\pi_1 X,X)$}
\label{subsection Define for element of Lpi}

For $\theta = (M,\partial_{\pm}M,\phi ,N,\partial_{\pm} N,\psi,f)\in L_{n+1}(\pi_1 X,X)$ (\cite[Definition 3.19]{WXY16} or Definition \ref{definition Ln}), let $C_-M$ and $C_-N$ be $M \sqcup_{\partial_- M} C\partial_- M$ and $N \sqcup_{\partial_- N} C\partial_- N$ respectively.

 $C_-M$ (resp. $C_-N$) is manifold with boundary $\partial C_-M =\partial_+ M \sqcup_{\partial\partial_- M} C\partial\partial_- M$ (resp. $\partial C_-N =\partial_+ N \sqcup_{\partial\partial_- N} C\partial\partial_- N$).
Since $f:\partial\partial_+ M\to \partial\partial_+ N$ is an \emph{infinitesimally controlled homotopy equivalence} over $X$, it induces a  controlled homotopy equivalence restricting to $\partial C_-M$. Hence, by subsection \ref{subsection M infty and N infty}, this defines a $K$-theory class in $K_{n+1}(C^*(C\tilde{X})^G)$.
Then by rescaling, we actually obtain a class in $K_{n+1}(C_c^*(\tilde{X}\times [1, \infty))^G)$.
This class is written as $\hat{\rho} (\theta)$.

Let $\theta$ and $\theta'$ be two elements in $L_{n+1}(\pi_1 X, X)$.
Since the addition of the abelian group $L_{n+1}(\pi_1 X, X)$ is given by disjoint union(\cite[Definition 3.20]{WXY16} or Definition \ref{definition eLn}).
We have $\theta\sqcup \theta'$, the disjoint union of $\theta$ and $-\theta'$, belongs to $L_{n+1}(\pi_1 X, X)$.
Analogous to the situation for index map for $L$-groups (Theorem \ref{theorem L differ}), we have the following theorem.
The proof is essentially the same as Theorem \ref{theorem L differ}, so we omit it.

\begin{theorem}\label{theorem Lpi differ}
Let $\theta$ and $\theta'$ be two elements in $L_{n+1}(\pi_1 X, X)$. 
We further suppose $\theta =(M, \partial_{\pm} M,\phi, N,\partial_{\pm} N, \psi, f )$ and $\theta' = (M', \partial_{\pm} M', \phi', N',\partial_{\pm} N',\psi',f')$ such that $\partial_+ M=\partial_+ M'$, $\partial_+ N=\partial_+ N'$ and restricting to the $\partial_+$ boundary $f=f'$ and $\phi=\phi'$.
Then we have
\[
\hat\rho(\theta)-\hat\rho(\theta')=\hat\rho(\theta \sqcup \theta')-\hat\rho(\theta' \sqcup\theta').
\]
\end{theorem}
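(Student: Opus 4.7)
The plan is to mimic the proof of Theorem \ref{theorem L differ} almost verbatim, with the key difference that gluing now happens along the $\partial_+$ boundaries (which are identified by hypothesis, together with the map $f=f'$ there), while each manifold still carries a $\partial_-$ boundary that has been coned off in the construction of $C_-M$, $C_-N$. Because of the rescaling that produces $CX$, any finite-propagation representative concentrated near the cone direction automatically sits in $C^*_c(\tilde{X}\times[1,\infty))^G$, so the $K$-theory manipulations all take place in the right receptacle.

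First I would set notation paralleling Theorem \ref{theorem L differ}: write $(C_-M)_\infty$ for $C_-M$ with the infinite cylindrical end $\partial_+M\times[1,\infty)$ attached along $\partial_+M$, and likewise for $C_-M'$, $C_-N$, $C_-N'$. Using $f=f'$ and $\partial_+M=\partial_+M'$, $\partial_+N=\partial_+N'$, for a real number $R\geq K\gg 0$ I can identify $\partial_+M\subset\partial C_-M$ with $\partial_+M'\subset\partial C_-M'$ at distance $R$ along the common cylinder, and similarly on the $N$-side, to build $(C_-M)_R\sqcup_{\partial_+M\times\{R\}}(C_-M')_R$ and its $N$-counterpart. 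Decompose Hilbert spaces into $H_1$ (forms on $(C_-M)_K\sqcup(C_-N)_K$), $H_1'$ (primed version), $H_2$ (forms on the finite cylinder $\partial_+\times[K,R]$), and $H_3$ (forms on the infinite end $\partial_+\times[R,\infty)$), exactly as in the proof of Theorem \ref{theorem L differ}.

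Next, pick finite-propagation representatives $a$ of $\hat\rho(\theta)$ and $a'$ of $\hat\rho(\theta')$ built from the almost Poincaré-duality operators $S_f,S_f'$ via subsection \ref{subsection M infty and N infty} and Corollary \ref{cor almost invertible}, chosen so that $[1-a]$, $[1-a']$ are trivial on $\tilde{X}\times[K,\infty)$; this is the same input used in Theorem \ref{theorem L differ}, and it is available here because Section \ref{subsection Define for element of L} applies without change to each of the two boundaries separately, with the cone $C\partial_-$ playing no new role on the level of operator estimates. Form the block operators
\[
A=\begin{pmatrix} a&0&0&0\\ 0&0&0&0\\ 0&0&0&0\\ 0&0&0&a'\end{pmatrix},\qquad
A'=\begin{pmatrix} a'&0&0&0\\ 0&0&0&0\\ 0&0&0&0\\ 0&0&0&a'\end{pmatrix},
\]
acting on $H_1\oplus H_2\oplus H_2\oplus H_1'$ and $H_1'\oplus H_2\oplus H_2\oplus H_1'$ respectively, giving at once $\hat\rho(\theta)-\hat\rho(\theta')=[A]-[A']$ in $K_{n+1}(C^*_c(\tilde{X}\times[1,\infty))^G)$.

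Finally, introduce almost flat cutoffs $\phi,\phi'$ with $\phi+\phi'=1$, $\phi$ supported on the first slab and $\phi'$ on the second. Writing
\[
A=\sqrt{\phi}\,\widetilde a\,\sqrt{\phi}+\sqrt{\phi'}\,\widetilde a'\,\sqrt{\phi'}+\varepsilon,
\]
with $\|\varepsilon\|$ small (thanks to $[D,\phi]$, $[D,\phi']$ being small) and $\widetilde a,\widetilde a'$ now viewed as finite-propagation approximants of $(D+S_f)(D-S_f')^{-1}$ and $(D'+S_{f'})(D'-S_{f'}')^{-1}$ on the disjoint-union manifold, identifies $[A]$ with the class $\hat\rho(\theta\sqcup\theta')$ built from the almost Poincaré-duality operator $S_{\sqcup,f}=S_f\sqcup S_{f'}$, $S_{\sqcup,f}'=S_f'\sqcup S_{f'}'$ on $(C_-M\sqcup_{\partial_+}C_-M')\sqcup(C_-N\sqcup_{\partial_+}C_-N')$; the same argument applied to $A'$ gives $[A']=\hat\rho(\theta'\sqcup\theta')$. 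The main obstacle is the last identification: one must verify that on the glued manifold the canonical almost Poincaré-duality operator used to define $\hat\rho(\theta\sqcup\theta')$ agrees, up to an operator of arbitrarily small norm, with the block-diagonal operator obtained from $S_f,S_f'$ and $S_{f'},S_{f'}'$ separately. This is where one uses crucially that $f=f'$ on $\partial_+$: no defect is produced along the gluing cylinder, so the cutoff calculation from Theorem \ref{theorem L differ} goes through word for word, and Equation (\ref{form difference formila}) translates directly into the asserted identity for $\hat\rho$.
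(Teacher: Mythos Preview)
Your proposal is correct and follows exactly the approach the paper indicates: the paper itself omits the proof entirely, stating only that it ``is essentially the same as Theorem \ref{theorem L differ}'', and your writeup carries out precisely that adaptation, correctly identifying that the gluing now takes place along the $\partial_+$ boundaries (where $f=f'$ by hypothesis) while the $\partial_-$ boundaries have been coned off into $C_-M$, $C_-N$, and that the receptacle is $K_{n+1}(C^*_c(\tilde{X}\times[1,\infty))^G)$.
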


By Theorem \ref{theorem Lpi differ} we can see that $\hat{\rho} :L_{n+1}(\pi_1 X, X) \to  K_{n+1}(C_c^*(\tilde{X}\times [1, \infty))^G)$ is well defined.
We now come to show that it is a well defined group homomorphism.
The point is to show that $\hat\rho(\theta\sqcup \theta')$ is zero.

Although we can not use bordism invariance to show that $\hat\rho(\theta\sqcup \theta')$ is zero, we can show this directly.
Since $\hat\rho(\theta\sqcup \theta')$ lies in the image of the following map:
\[
\begin{tikzcd}
K_*^G(C\tilde{X}) \ar[r] \ar[d] &K_*(C^*(C\tilde{X})^G) \ar[d]\\
K_*(C^*_{L,c}(\tilde{X}\times [1,\infty))^G) \ar[r]& K_*(C^*_c(\tilde{X}\times [1,\infty))^G) 	
\end{tikzcd}
\]
where $K_*^G(C\tilde{X})$ is the $G$-equivariant $K$-homology of $C\tilde{X}$.
By \cite[Lemma 4.6]{WXY16}, $K_*(C^*_{L,c}(\tilde{X}\times [1,\infty)^G) $ is always trivial.
Thus $\hat\rho(\theta\sqcup \theta')$ is always zero in $K_*(C^*_c(\tilde{X}\times [1,\infty)^G)$.

Hence, we have
\[
\hat{\rho} :L_*(\pi_1 X, X)\to K_*(C^*_c(\tilde{X}\times [1,\infty)^G)
\]
is a well defined group homomorphism.

\subsection{Higher rho map for $\mathcal{S}_n(X)$}\label{subsection Define for element of S}
Suppose $\theta=(M,\partial M, \phi, N,\partial N, \psi, f) \in \mathcal{S}_n(X)$ (\cite[Definition 3.4]{WXY16} or Definition \ref{definition S}).
We use the same notations as in subsection \ref{Define for element of N}.
For $t\in[0,1]$ and $m\in\mathbb{N}$, we have $\sqcup_m \tilde{M}_{m+t}$ and $\sqcup_m \tilde{N}_{m+t}$.
And $S_{\sqcup_m \tilde{M}_{m+t}}$ (resp. $S_{\sqcup_m \tilde{N}_{m+t}}$), the \emph{Poincar\'e duality operator} of $d_{\sqcup_m \tilde{M}_{m+t}}$ (resp. $d_{\sqcup_m \tilde{N}_{m+t}}$).
As well as, $D_t$ and $S_t$.
By Section \ref{Define for element of N}, we can define an element $\alpha \in K_n(C^*_L(\tilde{X})^G)\otimes \mathbb{Z}[\frac{1}{2}]$.

When $t=0$, $\tilde{M}\times \{1\} \hookrightarrow \sqcup_m \tilde{M}_{m}$ (resp.  $\tilde{N}\times \{1\}\hookrightarrow \sqcup_m \tilde{N}_{m}$), $d_{\sqcup_m \tilde{M}_{m}}|_{\tilde{M}}=d_{\tilde{M}}$ (resp. $d_{\sqcup_m \tilde{N}_{m}}|_{ \tilde{N}}=d_{\tilde{N}}$), and $S_{\sqcup_m \tilde{M}_{m}}|_{\tilde{M}}=S_{\tilde{M}}$ (resp. $S_{\sqcup_m \tilde{N}_{m}}|_{\tilde{N}}=S_{\tilde{N}}$).

Moreover, since $f:M\to N$ is a homotopy equivalence over $X$.
We can connect $\alpha(1)$ to the trivial element in $K_n(C^*(\tilde{X})^G)$.
Hence, we have the following map
\[
\rho: \mathcal{S}_n(X)\to K_n(C^*_{L,0}(\tilde{X})^G).
\]

We want to show the map
\begin{equation}\label{eq.higherrhomap}
k_n\cdot\rho: \mathcal{S}_n(X)\to K_n(C^*_{L,0}(\tilde{X})^G)\otimes \mathbb{Z}[\frac{1}{2}],	
\end{equation}
is a well defined group homomorphism, where $k_n=1$ if $n$ is even and $2$ if $n$ is odd.
This is a corollary of the following theorem.
Recall that  we have $\mathcal{S}_n(X)\cong L_{n+1} (\pi_1 X, X)$ (\cite[Section 3.3]{WXY16} or see Appendix \ref{section WXY surgery sequence}).

\begin{theorem}[{\cite[Theorem 6.9]{WXY16}}]\label{theorem pre well rho}
Let $X$ be a closed oriented topological manifold of dimension $\geq 5$.
We have the following commutative diagram of $K$-groups

\medskip
\begin{adjustbox}{center}
\begin{tikzcd}
L_{n+1}(\pi_1 X,X) \arrow{r}{\hat{\rho}} &K_{n+1} (C^*_c (\tilde{X} \times  [1,\infty) )^G )\arrow{d}{\partial_*}\otimes \mathbb{Z}[\frac{1}{2}] \\
\mathcal{S}_n(X)\arrow[u,"c_*"'] \arrow{r}{k_n \cdot \rho} & K_n( C^*_{L,0,c} (\tilde{X} \times [1, \infty))^G)\otimes \mathbb{Z}[\frac{1}{2}] = K_n(C^* _{L,0}(\tilde{X})^G)\otimes \mathbb{Z}[\frac{1}{2}]
\end{tikzcd}	
\end{adjustbox}
where $\partial_*$ is the connecting map in the $K$-theory long exact sequence associated to the following short exact sequence of $C^*$-algebras
$$ 0 \to  C^*_{L,0,c} (\tilde{X} \times [1, \infty))^G \to C^*_{L,c} (\tilde{X} \times [1, \infty))^{G} \to C^*_c (\tilde{X} \times [1, \infty))^{G} \to 0,$$
and $k_n = 1$ if $n$ is even and $2$ if $n$ is odd.
\end{theorem}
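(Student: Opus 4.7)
The plan is to exploit the product formula (Proposition \ref{proposition times proposition}) together with explicit descriptions of both $c_*$ and the connecting map $\partial_*$, so that the theorem reduces to a direct comparison of two $K$-theory classes built from signature operators. The key observation is that the isomorphism $c_*: \mathcal{S}_n(X)\to L_{n+1}(\pi_1 X,X)$ sends $\theta=(M,\partial M,\phi,N,\partial N,\psi,f)\in\mathcal{S}_n(X)$ to an element represented, up to bordism, by the cylinder $\theta\times[0,1]$, whose $\partial_-$-boundary is the pair $(M,N)$ with the genuine homotopy equivalence $f$, and whose $\partial_+$-boundary is $(\partial M,\partial N)$ carrying the infinitesimally controlled restriction $\partial f$. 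This realisation is compatible on the nose with the way $\hat\rho$ was defined in Subsection \ref{subsection Define for element of Lpi}.

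First, I would unwind the definition of $\hat\rho(c_*\theta)$: it is the class in $K_{n+1}(C^*_c(\tilde X\times[1,\infty))^G)\otimes\mathbb{Z}[\tfrac12]$ built from the signature operator on $C_-\tilde M=\tilde M\sqcup_{\tilde M}C\tilde M$ and $C_-\tilde N=\tilde N\sqcup_{\tilde N}C\tilde N$ via the construction of Section \ref{subsection Define for element of L}. Second, I would compare this to a product construction attached to $\theta$ and the half-line $[1,\infty)$, using the proper map $\tau\colon CM\to M\times[1,\infty)$ (and its counterpart for $N$) to rescale each cone into a cylindrical end. A half-line analogue of Proposition \ref{proposition times proposition}, obtained by essentially the same estimates as in its proof, then identifies $\hat\rho(c_*\theta)$ with $k_n$ times the external product of $\rho(\theta)\in K_n(C^*_{L,0}(\tilde X)^G)\otimes\mathbb{Z}[\tfrac12]$ and the localised $K$-homology class of the signature operator on $[1,\infty)$. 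Third, I would observe that the connecting homomorphism $\partial_*$ of the short exact sequence $0\to C^*_{L,0,c}\to C^*_{L,c}\to C^*_c\to 0$ acts on this product class as the standard Bott/suspension isomorphism on the half-line factor, returning the $\rho(\theta)$ factor unchanged and producing the required identity $\partial_*\hat\rho(c_*\theta)=k_n\rho(\theta)$.

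The hardest step will be the second: justifying the passage from the cone construction used to define $\hat\rho(c_*\theta)$ to the cylindrical construction needed to invoke the product formula. This requires a controlled comparison of the Poincar\'e duality operators $S_f,S'_f$ built in Section \ref{subsection M infty and N infty} between the two geometric models, together with a verification that the invertible operator $(D+\alpha S_f)(D-\alpha S'_f)^{-1}$ (odd case) and the difference of positive spectral projections $P_+(D+\alpha S_f)-P_+(D-\alpha S'_f)$ (even case) remain in the correct hybrid $C^*$-algebra throughout the homotopy between the two models; the propagation estimates of Lemma \ref{lemma odd functional cal for perturb} and Lemma \ref{lemma even functional cal for perturb} should be adequate for this once the rescaling is set up. The appearance of the factor $k_n$ is then accounted for by the same parity phenomenon recalled in Remark \ref{remark twist 2}: the boundary of the signature class equals twice the signature in even dimensions, which is precisely why $\mathbb{Z}[\tfrac12]$-coefficients are already built into the statement.
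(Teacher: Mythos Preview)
Your overall strategy---reducing the commutativity to a product formula for signature classes---is the same as the paper's, and the identification of $c_*\theta$ with a cylinder over $\theta$ is the right geometric picture. However, the paper's execution differs from yours in a way that matters, and your second step as written has a gap.

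The paper does \emph{not} attempt a ``half-line product formula'' relating $\hat\rho(c_*\theta)$ to an external product with a class on $[1,\infty)$. The problem with your formulation is that there is no natural nonzero ``localised signature class'' on $[1,\infty)$ to serve as the second tensor factor: $K_1(C^*_L([1,\infty)))\cong K_1(\mathrm{pt})=0$, and the Roe-type algebras of the half-line are likewise $K$-theoretically trivial by flasqueness. So the product you want to write down does not land in the right group in any obvious way, and your third step---that $\partial_*$ acts on this putative product as Bott periodicity in the half-line slot---is precisely the content of the theorem rather than an independent fact you can invoke.

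What the paper does instead is compute $\partial_*(\hat\rho(c_*\theta))$ by writing down an explicit lift $a_\theta(t)$ of $\hat\rho(c_*\theta)$ to $C^*_{L,c}(\tilde X\times[1,\infty))^G$, obtained by cutting with characteristic functions $\chi_n$ of $\tilde X\times[n,\infty)$ and interpolating linearly. The key observation is that this \emph{same} path $a_\theta$ also serves as a lift computing the Mayer--Vietoris boundary $\partial_{MV}:K_{n+1}(C^*_{L,0}(\tilde X\times\mathbb{R})^G)\to K_n(C^*_{L,0}(\tilde X)^G)$ applied to $\rho(\theta\times\mathbb{R})$. This gives $\partial_*(\hat\rho(c_*\theta))=\partial_{MV}(\rho(\theta\times\mathbb{R}))$, and now one is on the full line $\mathbb{R}$, where the product formula (Proposition~\ref{prop.times propositionfors}) does apply: $\rho(\theta\times\mathbb{R})=k_n\cdot\alpha_*(\rho(\theta)\otimes\mathrm{Ind}_L(\mathbb{R}))$. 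A commutative square relating $\partial_{MV}$ on $C^*_{L,0}(\tilde X\times\mathbb{R})^G$ to $\partial_{MV}$ on the $C^*_L(\mathbb{R})$ factor (together with Yu's isomorphism $K_*(\mathbb{R})\cong K_*(C^*_L(\mathbb{R}))$) then finishes the computation. In short: the passage to the full line via Mayer--Vietoris is what replaces your undefined half-line product, and the explicit lift by characteristic functions is what makes the identification $\partial_*=\partial_{MV}$ work.
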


The strategy to prove this theorem is to compute the map $\partial_*$ in a Mayer-Vietoris sequence.

Before we go into details, we specify which Mayer-Vietoris sequence we are going to use.
Let $\mathcal{A}:=C^*_{L,0}(\tilde{X}\times \mathbb{R})^G$.
Define a bunch of $C^*$ algebras as
\begin{align*}
&\mathcal{A}_- = \bigcup_{n\in \mathbb{N}} C^*_{L,0}(\tilde{X}\times [-\infty, n]; \tilde{X}\times \mathbb{R})^G\\
&\mathcal{A}_+ = \bigcup_{n\in \mathbb{N}} C^*_{L,0}(\tilde{X}\times [-n, \infty]; \tilde{X}\times \mathbb{R})^G\\
&\mathcal{A}_\cap = \bigcup_{n\in \mathbb{N}} C^*_{L,0}(\tilde{X}\times [-n, n]; \tilde{X}\times \mathbb{R})^G,	
\end{align*}
all of which are two sided ideal of $\mathcal{A}$.
Also, we have
\[
\mathcal{A}_- + \mathcal{A}_+ = \mathcal{A},\;\mathcal{A}_- \cap \mathcal{A}_+ =\mathcal{A}_\cap.
\]
Thus there is Mayer-Vietoris sequence
\[
\begin{tikzcd}
K_0(\mathcal{A}_\cap)\arrow{r} &K_0(\mathcal{A}_+)\otimes K_0(\mathcal{A}_-) \arrow{r}& K_0(\mathcal{A})\arrow{d}{\partial_{MV}} \\
K_1(\mathcal{A})\arrow[u,"\partial_{MV}"'] & K_1(\mathcal{A}_+)\otimes K_0(\mathcal{A}_-) \arrow{l} & K_0(\mathcal{A}_\cap)\arrow{l}.	
\end{tikzcd}
\]
Similarly, let $\mathcal{B}:=C^*_L(\mathbb{R})$, we define
\begin{align*}
&\mathcal{B}_- = \bigcup_{n\in \mathbb{N}} C^*_{L}( [-\infty, n];  \mathbb{R})\\
&\mathcal{B}_+ = \bigcup_{n\in \mathbb{N}} C^*_{L}( [-n, \infty];  \mathbb{R})\\
&\mathcal{B}_\cap = \bigcup_{n\in \mathbb{N}} C^*_{L}( [-n, n]; \mathbb{R}),
\end{align*}
and again, we have Mayer-Vietoris sequence
\[
\begin{tikzcd}
K_0(\mathcal{B}_\cap)\arrow{r} &K_0(\mathcal{B}_+)\otimes K_0(\mathcal{B}_-) \arrow{r}& K_0(\mathcal{B})\arrow{d}{\partial_{MV}} \\
K_1(\mathcal{B})\arrow[u,"\partial_{MV}"'] & K_1(\mathcal{B}_+)\otimes K_0(\mathcal{B}_-) \arrow{l} & K_0(\mathcal{B}_\cap)\arrow{l}.	
\end{tikzcd}
\]

\begin{prf}
To compute $\partial_* (\hat{\rho}(c_*(\theta)))$, we explicitly construct the lifting of $\hat{\rho}(c_*(\theta))$ in $C^*_{L,c}(\tilde{X}\times[1, \infty])^G$.

Let $a_\theta(n)$ be $\chi_n \hat{\rho}(c_*(\theta))\chi_n$, where $\chi_n$ the characteristic function on $\tilde{X}\times[n,\infty]$.
Then $a_\theta(t)=(n+1-t)a_\theta(n)+(t-n)a_\theta(n+1)\in C^*_{L,c}(\tilde{X}\times[1, \infty])^G$ is a lift of $\hat{\rho}(c_*(\theta))$.

On the other hand, one can see $a_\theta$ is also a lift of $\rho'(\theta\times \mathbb{R})$ under the homomorphism
\[
\partial_{MV}: K_{n+1}(C^*_{L,0}(\tilde{X}\times\mathbb{R})^G)\to K_n(C^*_{L,0}(\tilde{X})^G).
\]

Thus, we have
\[
\partial_* (\hat{\rho}(c_*(\theta)))=\partial_{MV} (\rho'(\theta\times \mathbb{R})).
\]

However, the right side one can be compute by the following formalism argument.

Note that there is a natural homomorphism $\alpha: C^*_{L,0}(\tilde{X})\times \mathcal{B}\to \mathcal{A}$, which restrict to homomorphisms
\[
\alpha: C^*_{L,0}(\tilde{X})^G\times \mathcal{B}_\pm\to \mathcal{A}_\pm, \ \alpha: C^*_{L,0}(\tilde{X})^G\times \mathcal{B}_\cap\to \mathcal{A}_\cap.
\]

And we have the following commutative diagram

\medskip
\begin{adjustbox}{center}
\begin{tikzcd}[column sep=tiny]
K_n(C^*_{L,0}(\tilde{X})^G)\otimes K_1(\mathcal{B})\ar[r]\ar[d,"1\times \partial_{MV}"] &K_{n+1}(C^*_{L,0}(\tilde{X})^G\otimes \mathcal{B}) \ar[r] & K_{n+1}(C^*_{L,0}(\tilde{X}\times\mathbb{R})^G) \ar[d, "\partial_{MV}"] \\
K_n(C^*_{L,0}(\tilde{X})^G)\otimes K_0(\mathcal{B}_\cap)\ar[r] & K_{n}(C^*_{L,0}(\tilde{X})^G\otimes \mathcal{B}_\cap) \ar[r] & K_{n}(\mathcal{A}_\cap)=K_{n}(C^*_{L,0}(\tilde{X})^G).
\end{tikzcd}
\end{adjustbox}

Using the main result of \cite[Theorem 3.2]{Y97}, we know that the local index map from $K_i (\mathbb{R})$ to $K_i (C^*_L (\mathbb{R} ))$ is an isomorphism, where $K_i (\mathbb{R} )$ denote the $K$-homology of $\mathbb{R}$.

Now the proof is completed by the a product formula(Proposition \ref{prop.times propositionfors} or \cite[Theorem 6.8]{WXY16}).
\end{prf}

Given $\theta=(M,\partial M,\phi, N, \partial N,\psi, f) \in \mathcal{S}_n(X)$, let $\theta \times  \mathbb{R}$ be the product of $\theta $ and $\mathbb{R}$, which defines an element in $\mathcal{S}_{n+1}(X\times \mathbb{R})$.
Here various undefined terms take the obvious meanings \cite{WXY16}.

Note that $\theta \times  \mathbb{R}$ defines a $K$-theory class in $K_{n+1}(C^*_{L,0}(\tilde{X}\times  \mathbb{R} )^G)$.
Also there is a natural homomorphism $\alpha : C^*_{L,0}(\tilde{X} )^G \otimes  C^*_L(\mathbb{R}) \to C^*_{L,0}(\tilde{X}\times  \mathbb{R} )^G$.
Now we have the following product formula, which essentially the same as \cite[Theorem 6.8]{WXY16}.

\begin{proposition}[{\cite[Theorem 6.8]{WXY16}}]\label{prop.times propositionfors}
With the notations as above, we have
\[
k_n \cdot \alpha_*(\rho(\theta)\otimes \text{Ind}_L(\mathbb{R}))=\rho(\theta \otimes \mathbb{R})
\]
in $K_{n+1}(C^*(\tilde{X}\times  \mathbb{R} )^G)$, where $\text{Ind}_L(\mathbb{R})$ is the $K$-homology class of the signature operator on $\mathbb{R}$ and $k_n=1$ if $n$ is even and $2$ if $n$ is odd.
\end{proposition}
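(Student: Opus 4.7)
The plan is to adapt the strategy used for the index-map product formula in Proposition \ref{proposition times proposition} (ultimately \cite[Appendix D]{WXY16}) to the obstruction-algebra setting. The underlying principle is that the external product in K-theory is compatible with the tensor decomposition of the signature complex on a product manifold, and once $\rho(\theta \times \mathbb{R})$ is written in this product form at each value of the localization parameter, the identification with $\alpha_*(\rho(\theta) \otimes \operatorname{Ind}_L(\mathbb{R}))$ becomes essentially automatic, up to a parity-dependent constant. Concretely, I would first establish the tensor product decomposition of the signature complex on $\tilde{M} \times \mathbb{R}$: under the canonical identification $L^2(\Lambda(\tilde{M} \times \mathbb{R})) \cong L^2(\Lambda(\tilde{M})) \widehat{\otimes} L^2(\Lambda(\mathbb{R}))$ of $\mathbb{Z}/2$-graded Hilbert spaces, one has $d_{\tilde M \times \mathbb{R}} = d_{\tilde M} \otimes 1 + \epsilon \otimes d_{\mathbb{R}}$, where $\epsilon$ is the parity grading, and the Poincar\'e duality operator factorizes (up to the normalization from Appendix \ref{section Poin duality oper}) as $S_{\tilde M \times \mathbb{R}} = S_{\tilde M} \otimes S_{\mathbb{R}}$. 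I would then lift this decomposition to the cylindrical-end manifolds $\sqcup_m (\tilde M_{m+t} \times \mathbb{R})$ of Section \ref{Define for element of N}; because the modifications producing $S_f$ and $S'_f$ live entirely along the boundary-collar direction of $\tilde M$, the product versions $S_{f \times \mathbb{R}}$ and $S'_{f \times \mathbb{R}}$ inherit an analogous tensor decomposition, and $D_t$ on the product splits as $D_t \otimes 1 + \epsilon \otimes (d_{\mathbb{R}} + d_{\mathbb{R}}^*)$.

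Next, I would invoke the external product at the K-theory level: the homomorphism $\alpha$ fits into a pairing
$$K_n(C^*_{L,0}(\tilde X)^G) \otimes K_1(C^*_L(\mathbb{R})) \longrightarrow K_{n+1}(C^*_{L,0}(\tilde X)^G \otimes C^*_L(\mathbb{R})) \xrightarrow{\alpha_*} K_{n+1}(C^*_{L,0}(\tilde X \times \mathbb{R})^G),$$
and, using the tensor decomposition just established, the invertible (when $n$ is even) or projection-difference (when $n$ is odd) representative of $\rho(\theta \times \mathbb{R})$ at each $t\in[1,\infty)$ can be written as an external product of the corresponding representative of $\rho(\theta)$ and that of $\operatorname{Ind}_L(\mathbb{R})$. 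A homotopy and rescaling argument inside $C^*_{L,0}(\tilde X \times \mathbb{R})^G$, parallel to the one at the end of Section \ref{subsection Define for element of L} that placed $(D + \alpha S_f)(D - \alpha S'_f)^{-1}$ in the Roe algebra, then identifies this product with $\alpha_*(\rho(\theta) \otimes \operatorname{Ind}_L(\mathbb{R}))$ up to the factor $k_n$.

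The main obstacle will be to rigorously track the scalar factor $k_n$. When $n$ is odd, both $\rho(\theta)$ and $\operatorname{Ind}_L(\mathbb{R})$ live in odd K-theory, and the odd-dimensional signature operator is constructed as $i(d+d^*)S$ restricted to even forms rather than as the full $d+d^*$. A direct computation shows that the tensor product of two such odd signature operators differs from the signature operator on the even-dimensional product manifold by a factor of $2$, which is precisely the origin of the $k_n$ appearing here, in \cite[Theorem 6.8]{WXY16}, and in the parity phenomenon of Remark \ref{remark twist 2}. Verifying this identity in our cylindrical-end setting amounts to comparing the explicit representatives for $\rho(\theta \times \mathbb{R})$ built from $D \pm \alpha S_{f\times \mathbb{R}}$ against the Bott-type representative coming from $\operatorname{Ind}_L(\mathbb{R})$; the residual terms that fail to factor --- arising from the modifications to $S_f$ near the boundary --- can be absorbed by a continuous deformation supported inside $C^*_{L,0}(\tilde X \times \mathbb{R})^G$, and so do not affect the resulting K-theory class.
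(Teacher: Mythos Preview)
Your overall plan---tensor decomposition of the signature complex on a product, followed by identification with the external $K$-theory product and tracking of $k_n$---is sound and is essentially the content of Appendix~\ref{subsection X times R proof} for Proposition~\ref{proposition times proposition}. But you have skipped over the one genuinely new issue that the paper's proof of Proposition~\ref{prop.times propositionfors} is actually about.

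The construction of $\rho(\theta\times\mathbb{R})$ in Section~\ref{Define for element of N} does \emph{not} use the spaces $\sqcup_m(\tilde M_{m+t}\times\mathbb{R})$ as you write. It uses $\sqcup_m(\tilde M\times\mathbb{R})_{m+t}$, where the subscript $m+t$ means rescaling the full product $\tilde M\times\mathbb{R}$ inside $C(M\times\mathbb{R})$. These are different metric spaces: the $\mathbb{R}$-factor is also rescaled, so there is no honest product metric, and hence the tensor decomposition $d_{\tilde M\times\mathbb{R}}=d_{\tilde M}\otimes 1+\epsilon\otimes d_{\mathbb{R}}$ you want to invoke at each level does not hold on the nose. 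The paper addresses exactly this: it observes that since the representative of $\text{Ind}_L(\mathbb{R})$ can be chosen with arbitrarily small propagation (by \cite{Y97}), one may replace $\sqcup_m(\tilde M\times\mathbb{R})_{m+t}$ by $(\sqcup_m\tilde M_{m+t})\times\mathbb{R}$ without changing the $K$-theory class. Only after that replacement does your tensor argument apply.

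Once the rescaling is handled, the paper does not redo the product computation; it simply invokes Proposition~\ref{proposition times proposition} at each $t$, and checks separately that the homotopy connecting $\alpha(1)$ to the trivial element (the extra path that lands $\rho$ in $C^*_{L,0}$ rather than $C^*_L$) is also compatible with the product. Your plan to rerun the Appendix~\ref{subsection X times R proof} computation inside $C^*_{L,0}$ would work too, and would absorb the $k_n$ bookkeeping you flag, but it is less economical than the paper's reduction.
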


\begin{prf}
Use the same notations as before.
We need to consider $\theta \times  \mathbb{R}\in \mathcal{S}_{n+1}(X\times \mathbb{R})$.
Note that, for $s\in[1,\infty)$, $(\tilde{M}\times \mathbb{R})_s \neq \tilde{M}_s\times \mathbb{R}$ (resp. $(\tilde{N}\times \mathbb{R})_s \neq \tilde{N}_s\times \mathbb{R}$).
As $(\tilde{M}\times \mathbb{R})_s$ (resp. $(\tilde{N}\times \mathbb{R})_s $) is obtained from $\tilde{M}\times \mathbb{R}$ (resp. $\tilde{N}\times \mathbb{R}$) by rescaling $\tilde{M}\times \mathbb{R}$ (resp. $\tilde{N}\times \mathbb{R}$) according to $s$.
Moreover, we use product metric on $\tilde{M}\times \mathbb{R}$ (resp. $\tilde{N}\times \mathbb{R}$).
Nevertheless, we can choose the $K$-homology class of the signature operator on $\mathbb{R}$ with arbitrary small propagation \cite{Y97}.

In a word, in order to compute $\rho(\theta \otimes \mathbb{R})$, we can replace $\sqcup_m (\tilde{M}\times \mathbb{R})_{m+t}$ (resp. $\sqcup_m (\tilde{N}\times \mathbb{R})_{m+t}$) by $(\sqcup_m \tilde{M}_{m+t})\times \mathbb{R}$ (resp. $(\sqcup_m \tilde{N}_{m+t})\times \mathbb{R}$).

For $t\in[0,1]$, the $K$-theory class in $K_n(C^*(\sqcup_m \tilde{X}_{m+t})^G)$, which we get from $\sqcup_m \tilde{M}_{m+t}$ and $\sqcup_m \tilde{N}_{m+t}$, is the relative index.
So satisfied the product formula of Proposition \ref{proposition times proposition}.

Moreover, by the proof of Proposition \ref{proposition times proposition} we have a path connect $\alpha(1)$ and trivial element in $K_n(C^*(\tilde{X})^G)$.
It is no hard to see that this path also satisfied the product formula of Proposition \ref{proposition times proposition}.
\end{prf}
%
%
%
\begin{remark}
The $k_n$ appears here and in the argument of bordism invariance of signature operator is the reason why we need to consider $K$-theory with coefficient $\mathbb{Z}[\frac{1}{2}]$ coefficient.
\end{remark}

\subsection{Commutative diagram}\label{subsection Commutative diagram}
In this subsection, we prove the following theorem.
\begin{theorem}\label{theorem mapping sur t ana}
Let $X$ be a closed oriented topological manifold of dimension $\geq 5$.
We have commutative diagram of abelian groups

\medskip
\begin{adjustbox}{center}
\begin{tikzcd}[column sep=tiny]
\mathcal{N}_{n+1}(X)\ar[r]\ar[d, "\text{Ind}_L"]&L_{n+1}(\pi_1 X)\ar[r]\arrow[d, "\text{Ind}"]&\mathcal{S}_n(X)\ar[r]\ar[d, "k_n\cdot\rho"]& \mathcal{N}_n(X)\ar[d, "k_n\cdot\text{Ind}_L"]\\
K_{n+1}(C^*_L(\tilde{X})^G)\otimes \mathbb{Z}[\frac{1}{2}]\ar[r]&K_{n+1}(C^*(\tilde{X})^G)\otimes \mathbb{Z}[\frac{1}{2}]\ar[r]&K_n(C^*_{L,0}(\tilde{X})^G)\otimes \mathbb{Z}[\frac{1}{2}]\ar[r]&K_n(C^*_L(\tilde{X})^G)\otimes \mathbb{Z}[\frac{1}{2}]
\end{tikzcd}	
\end{adjustbox}
where, $G=\pi_1 X$ is the fundamental group of $X$, $\tilde{X}$ is the universal covering of $X$, and $k_n=1$ if $n$ is even and $2$ if $n$ is odd.
\end{theorem}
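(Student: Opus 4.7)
The plan is to verify commutativity of the three squares in the diagram, leveraging the constructions of Sections \ref{subsection Define for element of L}, \ref{Define for element of N}, \ref{subsection Define for element of Lpi}, and \ref{subsection Define for element of S} together with Theorem \ref{theorem pre well rho}.

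For the middle square, given $\theta \in L_{n+1}(\pi_1 X)$, I would view it as an element of $L_{n+1}(\pi_1 X, X)$ with empty $\partial_-$ boundary via the natural map $j : L_{n+1}(\pi_1 X) \to L_{n+1}(\pi_1 X, X) \cong \mathcal{S}_n(X)$, which realizes the surgery-exact-sequence map $L_{n+1}(\pi_1 X) \to \mathcal{S}_n(X)$. When $\partial_- M = \emptyset$, the cone construction $C_- M$ of Section \ref{subsection Define for element of Lpi} degenerates to $M$ itself, so $\hat\rho(j(\theta))$ is the image of $\text{Ind}(\theta)$ under a natural homomorphism $\iota_* : K_{n+1}(C^*(\tilde{X})^G) \to K_{n+1}(C^*_c(\tilde{X} \times [1,\infty))^G)$. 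Naturality of the $K$-theory connecting map then identifies $\partial \circ \text{Ind}$ with $\partial_* \circ \hat\rho \circ c_* \circ j$, which equals $k_n \rho \circ j$ by Theorem \ref{theorem pre well rho}.

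For the left square, the surgery-obstruction map $\mathcal{N}_{n+1}(X) \to L_{n+1}(\pi_1 X)$ forgets the infinitesimal control on $f|_{\partial M}$, retaining only the underlying homotopy equivalence. Analytically, the evaluation $ev_* : K_{n+1}(C^*_L(\tilde{X})^G) \to K_{n+1}(C^*(\tilde{X})^G)$ sends $\text{Ind}_L(\theta)$ to the relative index $\text{Ind}(D_1)$ of the family from Section \ref{Define for element of N}; under the canonical identification of $K_{n+1}(C^*(\sqcup_m \tilde{X}_{m+1})^G)$ with $K_{n+1}(C^*(\tilde{X})^G)$, this coincides with $\text{Ind}$ applied to the image of $\theta$, since both are represented by the same $(D + \alpha S_f)(D - \alpha S'_f)^{-1}$-type operator on manifolds with cylindrical ends. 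For the right square, an analogous comparison works: passing $\theta \in \mathcal{S}_n(X)$ to $\mathcal{N}_n(X)$ weakens the homotopy equivalence to an infinitesimally controlled one, and under $K_n(C^*_{L,0}) \to K_n(C^*_L)$ the null-homotopy built into $\rho(\theta)$ is discarded, returning the class $k_n \, \text{Ind}_L$ of the image in $\mathcal{N}_n(X)$.

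The main obstacle will be the precise identification, in the middle square, of $\hat\rho \circ c_* \circ j$ with $\iota_* \circ \text{Ind}$: one has to match the Hilsum-Skandalis perturbations defining $\text{Ind}$ via cylindrical-end manifolds with the rescaled cone construction defining $\hat\rho$, under the rescaling map $\tau : CX \to X \times [1, \infty)$. Once this identification is in place, commutativity of all three squares follows by naturality of connecting maps together with Theorem \ref{theorem pre well rho}. The factor $k_n$ in the two rightmost vertical maps originates from the product formulas (Propositions \ref{proposition times proposition} and \ref{prop.times propositionfors}), reflecting, as in \cite{WXY16}, the fact that the boundary of the signature operator in odd dimensions yields twice the signature of the even-dimensional boundary.
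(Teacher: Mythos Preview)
Your proposal is correct and follows essentially the same route as the paper. The paper dispatches the left and right squares in one sentence (``follows immediately from definition''), while for the middle square it factors through the expanded diagram with $j_*: L_{n+1}(\pi_1 X)\to L_{n+1}(\pi_1 X,X)$, $\hat\rho$, and $\partial_*$, invoking Theorem \ref{theorem pre well rho} exactly as you do; your identification of the step $\hat\rho\circ j_* = \iota_*\circ \text{Ind}$ as the point requiring care is apt, though the paper leaves this implicit.
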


\begin{prf}
The commutativity of the left and the right square follows immediately from definition.
Now, we focus on the middle square
\[
\begin{tikzcd}
L_{n+1}(\pi_1 X)\ar[r]\arrow[d, "\text{Ind}"]&\mathcal{S}_n(X)\ar[d, "k_n\cdot\rho"]\\
K_{n+1}(C^*(\tilde{X})^G)\otimes \mathbb{Z}[\frac{1}{2}]\ar[r]&K_n(C^*_{L,0}(\tilde{X})^G)\otimes \mathbb{Z}[\frac{1}{2}].
\end{tikzcd}
\]

The commutativity of the above square can implied by following diagram

\medskip
\begin{adjustbox}{center}
\begin{tikzcd}[column sep=small]
&\mathcal{S}_n(X) \arrow[d, "c_*"] \arrow[ddr, "k_n\cdot\rho"] &\\
L_{n+1}(\pi_1X)\arrow[ur]\arrow[r, "j_*"]\arrow[d,"\text{Ind}"]& L_{n+1}(\pi_1 X,X)\arrow[d,"\hat{\rho}"]& \\
K_{n+1}(C^*(\tilde{X})^G)\otimes \mathbb{Z}[\frac{1}{2}]\arrow[r]&K_{n+1}(C_c^*(\tilde{X}\times [1,\infty))^G)\otimes \mathbb{Z}[\frac{1}{2}]\arrow[r, "\partial_*"] & K_n(C_{L,0}^*(\tilde{X})^G)\otimes \mathbb{Z}[\frac{1}{2}].
\end{tikzcd}
\end{adjustbox}

\end{prf}

\begin{remark}
We actually can eliminate the $\mathbb{Z}[\frac{1}{2}]$ coefficient, by the isomorphism of de Rham complex and simplicial complex.
However, we do not know how to eliminate it with a pure de Rham argument until now, so we will leave the $\mathbb{Z}[\frac{1}{2}]$ alone.
\end{remark}

In the last we mention that our definition coincide with the one in \cite{WXY16}.
This is actually immediate since both of them can be defined by hybrid $C^*$-algebra, and these hybrid algebra definition coincide with each other obviously.

\appendix
\section{Surgery long exact sequence}\label{section WXY surgery sequence}

In this appendix we list the definitions and main results of \cite[Section 3]{WXY16}.
Throughout, manifolds are suppose to be oriented and maps are suppose to be oriented-preserving.
For simplicity, we will suppress the terminology orientation in the context.

Let $X$ be a topological manifold.
Fix a metric on $X$ which is compatible with the topology of $X$.
If $X$ is a manifold with boundary, we  denote the boundary of $X$ by $\partial X$.
We begin with the notion of \emph{infinitesimal controlled homotopy equivalence}.

\begin{definition}
Let $M$ and $N$ be two compact Hausdorff spaces equipped with continuous maps $\phi: M \to X$ and $\psi: N \to X$.
A continuous map $f : M \to N$ is said to be an \emph{infinitesimally controlled homotopy equivalence} over $X$, if there exist proper continuous maps
\[
\begin{tikzcd}[column sep=small]
M       \times        [1,\infty) \arrow[rr, "F", yshift=0.7ex] \arrow[dr,"\Phi"'] & & N       \times        [1,\infty)\arrow[ll, "G", yshift=-0.7ex]\arrow[dl,"\Psi"]\\
& X        \times        [1,\infty)  &
\end{tikzcd}
\]
such that
\begin{enumerate}
\item $\Psi \circ F = \Phi$;
\item $F|_{M        \times        \{1\}}=f$, $\Phi|_{M  \times  \{1\}}=\phi$, $\Psi|_{N        \times        \{1\}}=\psi$;
\item There exists a proper continuous homotopy $\{H_s \}_{0\leq s \leq 1}$ between
\[
H_0 = F \circ G \text{~and~} H_1 = Id: N \times [1,\infty) \to N \times  [1,\infty)
\]
such that the diameter of the set $\{\Phi (H_s (z,t))~|~0 \leq s \leq1\}$ goes uniformly (i.e. independent of $z \in N$) to zero, as $t \to \infty $;
\item There exists a proper continuous homotopy $\{H'_s \}_{0\leq s \leq 1}$ between
\[
H'_0 = G \circ F \text{ and } H'_1 = Id: M \times [1,\infty) \to M  \times        [1,\infty)
\]
such that the diameter of the set $ \{\Psi (H'_s (y,t))~|~ 0 \leq s \leq1\}$ goes uniformly (i.e. independent of $y        \in M$) to zero, as $t \to \infty $.
\end{enumerate}
\end{definition}

We will also need the following notion of restrictions of homotopy equivalences gaining infinitesimal control on parts of spaces.

Suppose $(M,\partial M)$ and $(N,  \partial N)$ are two manifolds with boundary equipped with continuous maps $\phi: M \to  X$ and $\psi: N \to  X$.
Let $f : (M, \partial M) \to (N,  \partial N)$ be a homotopy equivalence such that $\psi \circ f =  \phi$.
Let $g:  (N,  \partial N)\to (M, \partial M)$ be a homotopy inverse of $f$.
Note that $\phi \circ g \neq \psi$ in general.
Let $\{h_s \}_{ 0 \leq s\leq 1}$ be a homotopy between $f \circ g$ and $Id: (N,  \partial N) \to  (N,  \partial N)$.
Similarly, let $\{h'_s \}_{ 0 \leq s\leq 1}$ be a homotopy between $ g \circ f$ and $Id: (M,\partial M) \to  (M,\partial M)$.

Since we have use $CM$ for other purpose, we denote $M \sqcup_{\partial M} \partial M \times  [1, \infty)$ by $M_\infty$ instead.

\begin{definition}[{\cite[Definition 3.3]{WXY16}}]
With the above notations, we say that on the boundary $f$ restricts to an \emph{infinitesimally controlled homotopy equivalence} $f|_{\partial M} : \partial M \to \partial N$ over $X$, if there exist proper continuous maps
\[
\begin{tikzcd}[column sep=small]
M_\infty \arrow[rr, "F", yshift=0.7ex] \arrow[dr,"\Phi"'] & & N_\infty \arrow[ll, "G", yshift=-0.7ex]\arrow[dl,"\Psi"]\\
& X        \times        [1,\infty)  &
\end{tikzcd}
\]
such that
\begin{enumerate}
\item $\Psi \circ F = \Phi$;
\item $F|_M=f$, $\Phi|_M =\phi$, $\Psi|_N=\psi$;
\item There exists a proper continuous homotopy $\{H_s \}_{0\leq s \leq 1}$ between
\[
H_0 = F \circ G \text{~and~} H_1 = Id: N_\infty \to N_\infty
\]
such that the diameter of the set $ \{\Phi (H_s (z,t))~|~0 \leq s \leq1\}$ goes uniformly (i.e. independent of $z        \in \partial N$) to zero, as $t \to \infty $;
\item There exists a proper continuous homotopy $\{H'_s \}_{0\leq s \leq 1}$ between
\[
H'_0 = G \circ F \text{ and } H'_1 = Id: M_\infty  \to M_\infty
\]
such that the diameter of the set $ \{\Psi (H'_s (y,t))~|~ 0 \leq s \leq1\}$ goes uniformly (i.e. independent of $y \in \partial M$) to zero, as $t \to \infty $.
\end{enumerate}
\end{definition}

The following geometric definition of $L$-groups due to Wall \cite{W1970a}.

\begin{definition}[Objects for definition of $L_n(\pi_1 X)$, \cite{WXY16}]\label{definition L}
An object
\[
\theta = (M,\partial M,\phi ,N,\partial N,\psi,f)
\]
of $L_n(\pi_1 X)$ consists the following data:
\begin{enumerate}
\item Two manifolds with boundary $M$ and $N$ both of dimension $n$;
\item Continuous maps $\phi: M \to X$ and $\psi: N \to X$;
\item A degree one normal map $f : (M,\partial M) \to (N,\partial N)$ such that $\psi \circ f = \phi$.
Moreover, on the boundary $f|_{\partial M} : \partial M \to \partial N$ is a homotopy equivalence.
\end{enumerate}	
\end{definition}

If $\theta = (M,\partial M,\phi ,N,\partial N,\psi,f)$ is an element, then we denote by $-\theta$ to be the same object except that the fundamental classes of $M$ and $N$ switch sign.
For two objects $\theta_1$ and $\theta_2 $, we write $\theta_1+\theta_2 $ to be the disjoint union of  $\theta_1$ and $\theta_2 $.
This sum operation is clearly commutative and associative, and admits a zero element: the element with $M$ (hence $N$) empty.
We denote the zero element by $0$.

\begin{definition}[Equivalence relation for definition $L_n(\pi_1 X)$, \cite{WXY16}]\label{definition eL}
Let
\[
\theta = (M,\partial M,\phi ,N,\partial N,\psi,f)
\]
be an object from $L_n(\pi_1 X)$.
We write $\theta \sim 0$ if the following conditions are satisfied:
\begin{enumerate}
\item There exists a manifold with boundary $W$ and a continuous map $\Phi: W \to X$.
The dimension of manifold $W$ is $n+1$.
Moreover, $\partial W = M \sqcup_{\partial M} \partial_2 W$, in particular $\partial M = \partial\partial _2 W$ (We can decompose $\partial W$ into two pieces $\partial_1 W$ and $\partial_2 W$, each pieces is a manifold with boundary. Moreover, $\partial W =\partial_1 W \sqcup \partial_2 W$ in particular $\partial\partial_1 W=\partial\partial_2 W$);
\item Similarly, there exists a manifold with boundary $V$ and a continuous map $\Psi: V \to X$.
The dimension of manifold $V$ is $n+1$.
Moreover, $\partial V = N \sqcup_{\partial N} \partial_2 V$, in particular $\partial N = \partial\partial_2 V$;
\item There is a degree one normal map $F : (W,\partial W) \to (V,\partial V)$ such that $\Psi \circ F = \Phi$.
Moreover, $F$ restricts to $f$ on $M$, and $F|_{\partial_2 W}: \partial_2 W \to \partial_2 V$ is a homotopy equivalence over $X$.
\end{enumerate}	
\end{definition}

A little bit of abuse our notation.
We denote by $L_n (\pi_1 X)$ the set of equivalence classes from Definition \ref{definition L} under equivalence relation from Definition \ref{definition eL}.
Note that $L_n (\pi X)$ is an abelian group under disjoint union.
It is a theorem of Wall that the above definition of $L$-groups is equivalent to the algebraic definition of $L$-groups \cite[Chapter 9]{W1970a}.

\begin{definition}[Objects for definition of $\mathcal{N}_n(X)$, \cite{WXY16}]\label{definition N}
An object
\[
\theta = (M,\partial M,\phi ,N,\partial N,\psi,f)
\]
of $\mathcal{N}_n(X)$ consists of the following data:
\begin{enumerate}
\item Two manifolds with boundary $M$ and $N$ both of dimensional $n$;
\item Continuous maps $\phi: M \to X$ and $\psi: N \to X$;
\item A degree one normal map $f : (M,\partial M) \to (N,\partial N)$ such that $\psi  \circ f = \phi$.
Moreover, on the boundary $f|_{\partial M} : \partial M \to \partial N$ is an \emph{infinitesimally controlled homotopy equivalence} over $X$.
\end{enumerate}	
\end{definition}

\begin{definition}[Equivalence relation for definition of $\mathcal{N}_n(X)$, \cite{WXY16}]\label{definition eN}
Let
\[
\theta = (M,\partial M,\phi ,N,\partial N,\psi,f)
\]
be an object from $\mathcal{N}_n(X)$.
We write $\theta   \sim 0$ if the following conditions are satisfied.
\begin{enumerate}
\item There exists a manifold with boundary $W$ and a continuous map $\Phi: W \to X$.
The dimension of manifold $W$ is $n+1$.
Moreover, $\partial W = M \sqcup_{\partial M} \partial_2 W$, in particular $\partial M = \partial partial _2 W$;
\item Similarly, there exists a manifold with boundary $V$ and a continuous map $\Psi: V \to X$.
The dimension of manifold $V$ is $n+1$.
Moreover, $\partial V = N \sqcup_{\partial N} \partial_2 V$, in particular $\partial N = \partial\partial_2 V$;
\item There is a degree one normal map $F : (W,\partial W) \to (V,\partial V)$ such that $\Psi \circ F = \Phi$.
Moreover, $F$ restricts to $f$ on $M$, and $F|_{\partial_2 W}: \partial_2 W \to \partial_2 V$ is an \emph{infinitesimally controlled homotopy equivalence} over $X$.
\end{enumerate}	
\end{definition}

We denote by $\mathcal{N}_n(X)$ the set of equivalence classes from Definition \ref{definition N} under equivalence relation from Definition \ref{definition eN}.
Note that $\mathcal{N}_n(X)$ is an abelian group with the sum operation being disjoint union.
$\mathcal{N}_n(X)$ is the usual Normal group in surgery theory.
Now we recall the relative $L$-group $L_n(\pi_1 X,X)$.

\begin{definition}[Objects for definition of $L_n(\pi_1 X,X)$, \cite{WXY16}]\label{definition Ln}
An object
\[
\theta = (M,\partial_{\pm}M,\phi ,N,\partial_{\pm} N,\psi,f)
\]
of $L_n(\pi_1X,X)$ consists of the following data:
\begin{enumerate}
\item Two manifolds with boundary $M$ and $N$ both of dimensional $n$.
Moreover, $\partial M =\partial_+ M \sqcup \partial_- M$(resp. $\partial N =\partial_{+} N \sqcup \partial_{-} N$) where $\partial M $(resp. $\partial N$) is the boundary of $M$(resp. $N$).
In particular, $\partial\partial_+ M = \partial\partial_- M$ and $\partial\partial_ + N = \partial\partial_- N$;
\item Continuous maps $\phi: M \to X$ and $\psi: N \to X$;
\item  A degree one normal map $f : (M,\partial M) \to (N,\partial N)$ such that $\psi \circ f = \phi$;
\item  The restriction $f|_{ \partial_{+} M }: \partial_{+} M \to \partial_+ N$ is a homotopy equivalence over $X$;
\item  The restriction $f|_{ \partial_{-} M }: \partial_{-} M \to \partial_- N$ is a degree one normal map over $X$;
\item  The homotopy equivalence $f|_{\partial_{-} M}$ restricts to an \emph{infinitesimally controlled homotopy equivalence} $f|_{\partial_{\pm} M }: \partial_{\pm} M \to \partial_{\pm} N$ over $X$.
\end{enumerate}	
\end{definition}

\begin{definition}[Equivalence relation for definition of $L_n(\pi_1X,X)$, \cite{WXY16}]\label{definition eLn}
Let
\[
\theta = (M,\partial_{\pm} M,\phi ,N,\partial_{\pm} N,\psi,f)
\]
be an object from $L_n(\pi_1X,X)$.
We write $\theta \sim 0$ if the following conditions are satisfied.
\begin{enumerate}
\item There exists a manifold with boundary $W$ and a continuous map $\Phi: W \to X$.
The dimension of manifold $W$ is $n+1$.
Moreover, $\partial W = M  \sqcup  \partial_2 W \sqcup \partial_3 W$ and we have decompositions $\partial M= \partial_{+} M \sqcup \partial_{-} M$, $\partial\partial_2 W = \partial\partial_{2,+}W \sqcup  \partial\partial_{2,-}W$ and $\partial\partial_3 W = \partial\partial_{3,+}W  \sqcup  \partial\partial_{3,-}W$ such that
\[
\partial_+M = \partial\partial_{2,+}W,\;\partial_-M = \partial\partial_{3,-}W,\; \text{ and }\;\partial\partial_{2,-}W = \partial\partial_{3,+}W.
\]
Furthermore, we have
\[
\partial_+M \cap \partial_-M = \partial\partial_{2,+}W \cap \partial\partial_{2,-}W = \partial\partial_{3,+}W  \cap \partial\partial_{3,-}W;
\]
\item Similarly, there exists a manifold with boundary $V$ and a continuous map $\Psi: V \to X$.
The dimension of manifold $V$ is $n+1$.
Moreover, $\partial V = N \sqcup \partial_2 V \sqcup \partial_3 V$ satisfies similar conditions as $W$;
\item There is a degree one normal map $F : (V,\partial V) \to (W,\partial W)$ such that $\Psi \circ F = \Phi$.
Moreover, $F$ restricts to $f$ on $N$;
\item $F|_{\partial_2 W} : \partial_2 W\to  \partial_2 V$ is a homotopy equivalence over $X$;
\item $F|_{\partial_2 W}$ restricts to an \emph{infinitesimally controlled homotopy equivalence} $F| _{\partial\partial_{2,-} W} : \partial\partial_{2,-} W  \to \partial\partial_{2,-} V$ over $X$.
\end{enumerate}	
\end{definition}

We denote by $L_n(\pi_1X,X)$ the set of equivalence classes from Definition \ref{definition Ln} under equivalence relation from Definition \ref{definition eLn}.
Note that $L_n(\pi_1X,X)$ is an abelian group with the sum operation being disjoint union.

\begin{remark}
The equivalence relation can also be stated as follows,
there exists $\theta'=(M',\partial_\pm M',\phi', N',\partial_\pm N', \psi', f') \in L_n(\pi_1X,X)$, such that
\begin{enumerate}
\item $f'$ is a homotopy equivalence ove $X$;
\item $\partial_- M'=\partial_+M$,\;$\partial_- N'=\partial_+N$,\;$f'|_{\partial_+ M'}=f|_{\partial_+ M}$,\;$\psi'|_{\partial_+ N'}=\psi|_{\partial_+ N}$;
\item $f'_{\partial_-M}$ restricts to an \emph{infinitesimally controlled homotopy equivalence} $f'_{\partial_-M}:\partial_-M\to\partial_-N$ over $X$.
\end{enumerate}
In fact, one can take $M\sqcup_{\partial_+M} M'$ as $\partial_3 W$, and $N\sqcup_{\partial_+N} N'$ as $\partial_3 V$ in definition \ref{definition eLn}.

Note that  both
\[
(M\sqcup_{\partial_+M} M')\sqcup_{\partial_-M\sqcup \partial_+M'} (M \sqcup_{\partial_+M}M')
\]
and
\[
(N\sqcup_{\partial_+N} N')\sqcup_{\partial_-N\sqcup \partial_+N'} (N \sqcup_{\partial_+N}N')
\]
are boundary of some manifold with boundary.
\end{remark}

\begin{definition}[Objects for definition of $\mathcal{S}_n(X)$, \cite{WXY16}]\label{definition S}
An object
\[
\theta = (M,\partial M,\phi ,N,\partial N,\psi,f)
\]
of $\mathcal{S}_n(X)$ consists of the following data:
\begin{enumerate}
\item Two manifolds with boundary $M$ and $N$ both of dimensional $n$;
\item Continuous maps $\phi: M \to X$ and $\psi: N \to X$;
\item Homotopy equivalence $f : (M,\partial M) \to (N,\partial N)$ such that $\psi  \circ f = \phi$.
Moreover, on the boundary $f|_{\partial M} : \partial M \to \partial N$ is an \emph{infinitesimally controlled homotopy equivalence} over $X$.
\end{enumerate}	
\end{definition}
\begin{definition}[Equivalence relation for definition of $\mathcal{S}_n(X)$, \cite{WXY16} ]\label{definition eS}
Let
\[
\theta = (M,\partial M,\phi ,N,\partial N,\psi,f)
\]
be an object from $\mathcal{S}_n(X)$.
We write $\theta        \sim 0$ if the following conditions are satisfied.
\begin{enumerate}
\item There exists a manifold with boundary $W$ and a continuous map $\Phi: W \to X$.
The dimension of manifold $W$ is $n+1$.
Moreover, $\partial W = M \sqcup_{\partial M} \partial_2 W$, in particular $\partial M = \partial\partial _2 W$;
\item Similarly, there exists a manifold with boundary $V$ and a continuous map $\Psi: V \to X$.
The dimension of manifold $V$ is $n+1$.
Moreover, $\partial V = N \sqcup_{\partial N} \partial_2 V$, in particular $\partial N = \partial \partial_2 V$;
\item There is a homotopy equivalence $F : (W,\partial W) \to (V,\partial V)$ such that $\Psi \circ F = \Phi$.
Moreover, on the boundary $f|_{\partial M} : \partial M \to \partial N$ is an \emph{infinitesimally controlled homotopy equivalence} over $X$.
\end{enumerate}	
\end{definition}

We denote by $\mathcal{S}_n(X)$ the set of equivalence classes from Definition \ref{definition S} under equivalence relation from Definition \ref{definition eS}.
Note that $\mathcal{S}_n(X)$ is an abelian group with the sum operation being disjoint union.

Following \cite{WXY16}, we begin to give a description of the surgery long exact sequence based on ideas of Wall.
There is natural group homomorphism
\[
i _* : \mathcal{N}_n (X) \to  L_n (\pi_1 X)
\]
by forgetting control.
Moreover, every element
\[
\theta = (M, \partial M, \phi , N, \partial N,\psi , f) \in  L_n (\pi_1 X)
\]
naturally defines an element in $L_n (\pi_1 X,X)$ by letting $\partial_-  M =\emptyset $.

We denote the corresponding natural homomorphism by
\[
j_* :L_n (\pi_1 X)\to L_n (\pi_1 X,X).
\]
Moreover, for $\theta = (M, \partial_{\pm} M, \phi , N, \partial_{\pm}  N,\psi , f)\in  L_n (\pi_1 X,X)$, we see that
$\theta_- = (\partial_-M, \partial\partial_- M, \phi , \partial_-N, \partial\partial_- N,\psi , f)$ defines an element in $\mathcal{N}_{n-1} (X)$.

We denote the corresponding natural homomorphism by
\[
\partial_* :L_n (\pi_1 X,X) \to \mathcal{N}_{n-1} (X).
\]

\begin{theorem}[{\cite[Theorem 3.22]{WXY16}}]\label{theorem LLNexact}
We have the following long exact sequence
\[
\dots\to  \mathcal{N}_n(X) \stackrel{i_*}{\to} L_n(\pi_1 X) \stackrel{j_*}{\to} L_n(\pi_1 X,X)\stackrel{\partial_*}{\to} \mathcal{N}_{n-1}(X)\to \dots.
\]
\end{theorem}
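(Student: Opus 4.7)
The plan is to verify exactness at each of the three positions $\mathcal{N}_n(X)$, $L_n(\pi_1 X)$, and $L_n(\pi_1 X, X)$, following the strategy of Wall's original proof of the surgery long exact sequence \cite{W1970a}, adapted to the infinitesimally controlled setting introduced in \cite{WXY16}. At each position I would first check that the relevant composition is zero via an explicit null-cobordism built from the given data, and then establish the reverse (kernel-into-image) inclusion by using a hypothesized null-cobordism to produce a new representative with the required level of control.

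For the three vanishing statements, the witnessing cobordisms are direct. First, $\partial_* \circ j_* = 0$ is immediate, because $j_*(\theta)$ has $\partial_- M = \emptyset$ by construction, so $\partial_* j_*(\theta) = 0$ on the nose. Second, for $i_* \circ \partial_* = 0$: given $\theta' = (M, \partial_\pm M, \phi, N, \partial_\pm N, \psi, f) \in L_n(\pi_1 X, X)$, the manifold $M$ itself (with $\partial_2 W := \partial_+ M$ and $V := N$, $\partial_2 V := \partial_+ N$, $F := f$) is a null-cobordism of $i_*(\partial_* \theta')$ in $L_{n-1}(\pi_1 X)$, since by hypothesis $f|_{\partial_+ M}$ is a homotopy equivalence over $X$. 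Third, for $j_* \circ i_* = 0$: given $\theta = (M, \partial M, \phi, N, \partial N, \psi, f) \in \mathcal{N}_n(X)$, I would take the cylinder $W = M \times I$, $V = N \times I$, $F = f \times \mathrm{id}_I$ and decompose its boundary as ``$M$'' $= M_0$, $\partial_2 W = \partial M \times I$, $\partial_3 W = M_1$, with $\partial\partial_{2,+} W = \partial M \times \{0\}$ and $\partial\partial_{2,-} W = \partial M \times \{1\} = \partial\partial_{3,+} W$. The infinitesimally controlled condition on $f|_{\partial M}$ is precisely what is needed to verify both that $F|_{\partial_2 W} = f|_{\partial M} \times \mathrm{id}_I$ is a homotopy equivalence over $X$ and that $F|_{\partial\partial_{2,-} W} = f|_{\partial M}$ is infinitesimally controlled.

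For the kernel-image inclusions, the common recipe is to glue the given null-cobordism to the original element in order to produce the required new representative. For $\ker(\partial_*) \subseteq \mathrm{im}(j_*)$, given $\theta' \in L_n(\pi_1 X, X)$ with $\partial_* \theta' = 0$ exhibited by a null-cobordism $(W_0, V_0, F_0)$ in the sense of Definition \ref{definition eN}, I would form $M' = M \cup_{\partial_- M} W_0$, $N' = N \cup_{\partial_- N} V_0$, and $f' = f \cup F_0$, and check that $\partial M' = \partial_+ M \cup_{\partial\partial_- M} \partial_2 W_0$ is a closed manifold on which $f'$ restricts to a homotopy equivalence over $X$; this produces an element of $L_n(\pi_1 X)$ whose $j_*$-image is cobordant to $\theta'$ via the obvious attachment of $W_0$ to the cylinder $M \times I$. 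The two remaining inclusions $\ker(j_*) \subseteq \mathrm{im}(i_*)$ and $\ker(i_*) \subseteq \mathrm{im}(\partial_*)$ are established by an entirely analogous recipe using the null-cobordism of Definition \ref{definition eLn} (respectively Definition \ref{definition eL}) to construct the new representative.

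The main obstacle is the careful treatment of corners and the precise matching of decomposed boundaries under the gluing operations, together with the verification that the diameter estimates defining infinitesimal control survive every construction. In particular, the gluings above produce manifolds with corners, so one must either work in a category of manifolds with corners or smooth the corners compatibly with the infinitesimal control data; moreover, when concatenating infinitesimally controlled homotopies across a gluing locus, one must check that the composite diameter estimates continue to decay to zero uniformly. Once these technicalities are dispatched, the exactness of the sequence is essentially formal, following from the cobordism constructions sketched above.
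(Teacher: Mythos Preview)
The paper does not give its own proof of this theorem: it appears in Appendix~\ref{section WXY surgery sequence}, which explicitly ``list[s] the definitions and main results of \cite[Section 3]{WXY16}'', and the statement is simply cited as \cite[Theorem 3.22]{WXY16} without argument. There is therefore nothing in the paper to compare your proposal against.

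That said, your outline is the standard one and matches what one expects from the proof in \cite{WXY16}: verify the three vanishing compositions by exhibiting explicit null-cobordisms (tautological for $\partial_*\circ j_*$, the original object itself for $i_*\circ\partial_*$, and a cylinder for $j_*\circ i_*$), and then establish each kernel-into-image inclusion by gluing a hypothesized null-cobordism onto the given representative. Your identification of the technical issues (corner-smoothing and the preservation of the uniform diameter decay under gluing and concatenation of homotopies) is also on point; these are exactly the points that require care in the infinitesimally controlled setting.
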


We shall identify the groups $\mathcal{S}_n(X)$ and $L_{n+1} (\pi_ 1 X, X)$.
There is a natural group homomorphism
\[
c_* : \mathcal{S}_n(X)\to L_{n+1} (\pi_ 1 X, X)
\]
by mapping
\[
\theta = (M,\partial M,\phi ,N,\partial N,\psi,f) \mapsto \theta \times I
\]
where $\theta \times I$ consists of the following data:
\begin{enumerate}
  \item Manifold $(M \times I, \partial_{\pm} (M \times I))$ with $\partial_+(M \times I) = M\times \{0\}$ and $ \partial_-(M \times I)= \partial M\times I\sqcup M\times \{1\}$.
In particular, we have $\partial\partial_+(M \times I)= \partial M = \partial\partial_-(M \times I)$.
Similarly, for $N$ we have manifold $(N \times I, \partial_{\pm} (N \times I))$;

  \item Continuous maps $\tilde{\phi} := \phi\circ p : M\times I \stackrel{p}{\to} M\stackrel{\phi}{\to} X$ and $\tilde{\psi} := \psi\circ p : N \times I \stackrel{p}{\to} N\stackrel{\psi}{\to} X$, where $p$ is the canonical projection map from $M\times I \to M$ or from $N\times I \to N$;
  \item A degree one normal map $\tilde{f} := f \times Id: (M \times I, \partial_{\pm} (M \times I))\to(N \times I, \partial_{\pm} (N \times I))$ such that $\tilde{\psi}\circ \tilde{f} =\tilde{\phi}$;
  \item The restriction $\tilde{f}|_{\partial_+(M \times I)}: \partial_+(M \times I) \to \partial_+(N \times I)$ is a homotopy equivalence over $X$;
  \item The restriction $\tilde{f}|_{\partial_-(M \times I)}: \partial_-(M \times I) \to \partial_-(N \times I)$ is a degree one normal map over $X$;
  \item The homotopy equivalence $\tilde{f}|_{\partial_+(M \times I)}: \partial_+(M \times I) \to \partial_+(N \times I)$ over $X$ restricts to an \emph{infinitesimally controlled homotopy equivalence} $\tilde{f}|_{\partial\partial_+(M \times I)}: \partial\partial_+(M \times I) \to \partial\partial_+(N \times I)$ over $X$.
\end{enumerate}

For 
\[\theta = (M, \partial_{\pm} M, \phi , N, \partial_{\pm}  N,\psi , f)\in  L_{n+1} (\pi_1 X,X),\]
we see that 
\[\theta_+ = (\partial_+M, \partial\partial_+ M, \phi , \partial_+N, \partial\partial_+ N,\psi , f)\] defines an element in $\mathcal{S}(X)$.

We denote the natural group homomorphism by
\[
r_* : L_{n+1} (\pi_1 X, X) \to  \mathcal{S}_n(X).
\]

It follows from definition that the homomorphisms $c_*$ and $r_*$ are well defined.
By \cite[Proposition 3.23]{WXY16} the homomorphisms $c_*$ and $r_*$ are inverses of each other.
In particular, we have $\mathcal{S}_n(X) \cong L_{n+1} (\pi_1 X, X)$. Hence, we have the following theorem.
\begin{theorem}[\cite{WXY16}]\label{theorem surgery exact}
We have the following long exact sequence:
\[
\dots\to  \mathcal{N}_{n+1}(X)\to L_{n+1}(\pi_1 X)\to \mathcal{S}_n(X)\to \mathcal{N}_n(X)\to \dots.
\]
\end{theorem}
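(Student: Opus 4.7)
The plan is to derive Theorem \ref{theorem surgery exact} directly from Theorem \ref{theorem LLNexact} together with the identification $\mathcal{S}_n(X) \cong L_{n+1}(\pi_1 X, X)$ established via the maps $c_*$ and $r_*$ defined just before the statement. Concretely, I take the long exact sequence of Theorem \ref{theorem LLNexact} in degree $n+1$,
\[
\cdots \to \mathcal{N}_{n+1}(X) \xrightarrow{i_*} L_{n+1}(\pi_1 X) \xrightarrow{j_*} L_{n+1}(\pi_1 X, X) \xrightarrow{\partial_*} \mathcal{N}_n(X) \to \cdots ,
\]
and replace the middle term $L_{n+1}(\pi_1 X, X)$ by $\mathcal{S}_n(X)$ using the isomorphism $c_* : \mathcal{S}_n(X) \xrightarrow{\cong} L_{n+1}(\pi_1 X, X)$. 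The resulting connecting maps are $r_* \circ j_* : L_{n+1}(\pi_1 X) \to \mathcal{S}_n(X)$ and $\partial_* \circ c_* : \mathcal{S}_n(X) \to \mathcal{N}_n(X)$, and exactness at each term follows from exactness of the LLN sequence because conjugation by an isomorphism preserves images and kernels.

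Thus the proof reduces to two ingredients. First, one needs Theorem \ref{theorem LLNexact} itself, which is quoted from \cite[Theorem 3.22]{WXY16}. Second, one needs the identification $r_* \circ c_* = \mathrm{id}_{\mathcal{S}_n(X)}$ and $c_* \circ r_* = \mathrm{id}_{L_{n+1}(\pi_1 X, X)}$, which is \cite[Proposition 3.23]{WXY16} and is proved geometrically: the equality $r_* c_* = \mathrm{id}$ is obvious since slicing $\theta \times I$ at $M\times\{0\}$ recovers $\theta$, while $c_* r_* = \mathrm{id}$ amounts to producing, for a representative $(M,\partial_\pm M,\phi,N,\partial_\pm N,\psi,f) \in L_{n+1}(\pi_1 X,X)$, a bordism (in the sense of Definition \ref{definition eLn}) between this object and $\theta_+ \times I$. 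Such a bordism is constructed by taking $W = M \times I$ and $V = N \times I$ with the appropriate decomposition of their boundaries into $\partial_2, \partial_3$ pieces, using that $f|_{\partial_+M}$ is already a homotopy equivalence and $f|_{\partial\partial_+ M}$ is infinitesimally controlled.

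The main obstacle, if one wished to give the argument independently of \cite{WXY16}, would be verifying this second ingredient: that the natural map $c_*$ is a bijection at the level of equivalence classes. Checking well-definedness of $c_*$ and $r_*$ on representatives is straightforward once the boundary decompositions are set up correctly, but confirming that the constructed $(W,V,F)$ actually satisfies all six conditions of Definition \ref{definition eLn}, in particular that the infinitesimal control on $\partial\partial_{2,-}W$ is preserved and that the relevant restrictions are genuine homotopy equivalences over $X$, is the delicate geometric point. Since we are citing the result directly, however, the plan is simply to invoke \cite[Proposition 3.23]{WXY16} and conclude the long exact sequence by functorial replacement, as indicated above.
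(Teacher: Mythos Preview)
Your proposal is correct and matches the paper's approach exactly: the paper simply states ``Hence, we have the following theorem'' immediately after recording that $c_*$ and $r_*$ are mutual inverses (citing \cite[Proposition 3.23]{WXY16}), so Theorem~\ref{theorem surgery exact} is obtained precisely by substituting $\mathcal{S}_n(X)\cong L_{n+1}(\pi_1 X,X)$ into the long exact sequence of Theorem~\ref{theorem LLNexact}.
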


\begin{remark}
When $X$ is of dimension $n\geq 5$, then by \cite[Proposition 3.31]{WXY16} $ \mathcal{S}_n(X)$ is isomorphic to $\mathcal{S}^{TOP}(X)$.
\end{remark}

Our paper is based on the Proposition \ref{appendix1}.
The point here is that in the definition of $\mathcal{S}_n^{C^{\infty}}(X)$ we use smooth manifolds with boundary.
Hence, our constructions in previous sections works.
We first introduce $\mathcal{S}_n^{C^{\infty}}(X)$.

\begin{definition}[{\cite[Definition 3.35]{WXY16}}]
An element $\theta \in \mathcal{S}_n^{C^{\infty}}(X)$ consists of the following data:
\begin{enumerate}
  \item $\theta$ is an element of $\mathcal{S}_n(X)$;
  \item $M$ and $N$ are smooth manifolds with boundary, and the map $f : M \to N$ is smooth.
\end{enumerate}
\end{definition}

\begin{proposition}[{\cite[Proposition 3.36]{WXY16}}]\label{appendix1}
For $n \geq 5$, we have natural isomorphisms
\[
\mathcal{S}_n^{C^{\infty}}(X) \cong \mathcal{S}_n(X).
\]
In particular, every element in $\mathcal{S}_n(X)$ has a smooth representative.
\end{proposition}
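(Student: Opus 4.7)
The plan is to show that the natural forgetful map
\[
\iota : \mathcal{S}_n^{C^\infty}(X) \longrightarrow \mathcal{S}_n(X),
\]
which sends a smooth representative to its underlying topological datum, is a group isomorphism. That $\iota$ is a well-defined group homomorphism is immediate, since every smooth bordism through degree-one normal maps is in particular a topological one, and disjoint union is compatible with smoothness. The content lies in surjectivity (every class has a smooth representative) and injectivity (two smooth representatives that are topologically equivalent are smoothly equivalent).

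For surjectivity, I would start with a representative $\theta = (M,\partial M, \phi, N, \partial N, \psi, f)$ of a given class in $\mathcal{S}_n(X)$ and produce $\theta' \sim \theta$ consisting of smooth manifolds with a smooth homotopy equivalence. The main input is classical smoothing theory of Kirby--Siebenmann: for $n \geq 5$, the obstructions to smoothing a topological $n$-manifold (relative to a smooth part of the boundary) live in $H^{\ast}(\,\cdot\,;\pi_{\ast}(\mathrm{TOP}/O))$, with $\pi_3(\mathrm{TOP}/O) = \mathbb{Z}/2$ being the Kirby--Siebenmann class and higher terms controllable in the stable range. First I would exploit the bordism relation of Definition \ref{definition eS} to replace $N$ by a smooth manifold $N'$ together with a smooth reference map $N' \to X$; this is possible because the relative surgery-theoretic realization results (Wall) allow one to find a smooth degree-one normal bordism from $(N,\partial N,\psi)$ to $(N',\partial N',\psi')$, where the $\partial$-condition (\emph{infinitesimal control}) is preserved. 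Next, using that $f$ is a homotopy equivalence, I would pull back a smooth structure to $M$ along a surgery bordism, and then use transversality / smooth approximation to replace $f$ by a smooth homotopy equivalence relative to the already-smooth boundary.

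For injectivity, given a topological equivalence $(W,V,\Phi,\Psi,F)$ between two smooth representatives $\theta_0$ and $\theta_1$, I would smooth $(W,V,F)$ rel boundary. Since $\partial W$ and $\partial V$ come with smooth structures from $\theta_0$ and $\theta_1$, the problem reduces to smoothing an $(n+1)$-dimensional topological cobordism relative to a smooth boundary. With $n+1 \geq 6$, the product structure theorem of Kirby--Siebenmann and the surgery-theoretic realization of normal bordisms (which is where the hypothesis $n \geq 5$ enters decisively) show that any obstructions that appear can be killed by an additional smooth bordism, which can be absorbed into $(W,V)$ by concatenation; throughout, the control condition on the $\partial_2$ parts is preserved because infinitesimal control is a purely coarse-geometric condition, insensitive to the choice of smooth structure.

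The main obstacle I foresee is the careful bookkeeping of Kirby--Siebenmann obstructions under the equivalence relation of Definition \ref{definition eS}, together with the simultaneous preservation of the \emph{infinitesimal control} condition on the restricted map $f|_{\partial M}$. Concretely, one must verify that the smooth bordism produced from smoothing theory can be chosen so that the restriction to the $\partial_2$-piece remains an infinitesimally controlled homotopy equivalence over $X$ in the sense of Definition 3.3 of \cite{WXY16}; this requires that the smoothing is done ``uniformly at infinity,'' which is guaranteed by taking the smooth structure to be the canonical one on a cylindrical end. Once this is settled, the proposition follows, and in particular every element of $\mathcal{S}_n(X)$ has a smooth representative, which is precisely what is needed to apply the differential-geometric constructions of Sections \ref{subsection Define for element of L}--\ref{section Mapping surgery to analysis} to arbitrary classes in the topological structure group. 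This argument is essentially that of \cite[Proposition 3.36]{WXY16}.
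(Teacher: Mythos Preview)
The paper does not supply its own proof of this proposition: it is stated in Appendix~\ref{section WXY surgery sequence} purely as a citation to \cite[Proposition~3.36]{WXY16}, with no argument given. There is therefore nothing in the paper to compare your proposal against.

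Your sketch is a reasonable outline of the smoothing-theory argument one would expect to find in \cite{WXY16}, and you correctly identify it as such in your final sentence. One comment on content: in your surjectivity step you propose first replacing $N$ by a smooth manifold via a degree-one normal bordism, but in the definition of $\mathcal{S}_n(X)$ the target $(N,\partial N,\psi)$ is part of the data and the equivalence relation (Definition~\ref{definition eS}) requires a homotopy equivalence $F:(W,\partial W)\to(V,\partial V)$, not merely a degree-one normal map. So one cannot freely alter $N$ by a normal bordism; rather, one should smooth $N$ directly (Kirby--Siebenmann, possibly after stabilizing or using the freedom in the equivalence relation to kill the obstruction), then smooth $M$ and $f$ relative to the boundary. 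This is a bookkeeping issue rather than a fundamental gap, but it is worth getting right if you intend to write out the argument in full.
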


\section{Poincar\'e duality operator}\label{section Poin duality oper}
In this appendix we recall the concept of \emph{Hilbert–Poincar\'e complex}.

We shall begin with \emph{complex of Hilbert spaces}:
\[
\begin{tikzcd}
\mathcal{H}_0\arrow{r}{d_1}&\mathcal{H}_1\arrow{r}{d_2}& \cdots \arrow{r}{d_n}&\mathcal{H}_n
\end{tikzcd}
\]
where $\mathcal{H}_i$ are Hilbert spaces, and $d_i$ are closable, unbounded operators.
We shall denote by $\mathcal{H}:=\oplus \mathcal{H}_i$ the direct sum of $\mathcal{H}_i$, denote by $d:=\oplus d_i$ the direct sum of $d_i$ acting on $\mathcal{H}$.
It is a closable operator, and $d^2 =0$.

\begin{definition}\label{poincaredualityoperator}
An $n$-dimensional \emph{Hilbert-Poincar\'e complex} is a complex of Hilbert spaces
\[
\begin{tikzcd}
\mathcal{H}_0\arrow{r}{d_1}&\mathcal{H}_1\arrow{r}{d_2}& \cdots \arrow{r}{d_n}&\mathcal{H}_n
\end{tikzcd}
\]
together with a bounded operator $S:\mathcal{H}\to\mathcal{H}$ such that
\begin{enumerate}
  \item $S$ is self-adjoint, and $S:\mathcal{H}_p\to\mathcal{H}_{n-p}$;
  \item $S$ can be approximated by bounded operators with finite propagation; 
  \item $S$ map the domain of $d$ into the domain of $d^*$, and $Sd+d^*S=0$;
  \item $S$ is a chain homotopy equivalence from the dual complex
\[
\begin{tikzcd}
\mathcal{H}_n\arrow{r}{d^*_n}&\mathcal{H}_{n-1}\arrow{r}{d^*_{n-1}}& \cdots \arrow{r}{d^*_1}&\mathcal{H}_0
\end{tikzcd}
\]
to the homology of the complex $(\mathcal{H},d)$.
\item there exists chain homotopy inverse of $S$, $\overline{S}$, which can be approximated by finite propagation operator, such that 
there exist $h$, $\overline{h}$, both of which can be approximated by finite propagation operator, and 
\[
I-S\overline{S}= d^*h -hd^*, \overline{S}S-I= d\overline{h}-\overline{h}d.
\] 
\end{enumerate}
\end{definition}

We shall call the operator $S$ a \emph{Poincar\'e duality operator} of $(\mathcal{H},d)$ since it induces Poincar\'e duality on the homology of $(\mathcal{H},d)$.
Note that, in order to eliminate most of the $\pm 1$ signs, our definition of Hilbert–Poincar\'e complex and Poincar\'e duality operator slightly different from the one in \cite[Definition 3.1]{HR1}.
If we denote $d+d^*$ by $D$, we know that $D\pm S$ are invertible (\cite[Lemma 3.5]{HR1}, \cite[section 5]{HR1}).

\begin{example}\label{ex.de Rham complex}
Let $M$ be an oriented, closed smooth (complete Riemannian) manifold of dimension $n$ and let $ \tilde{M}$ be a regular $G$-covering space of $M$, and $G$ is a discrete group.
We have \emph{de Rham complex}
\[
\begin{tikzcd}
\Lambda^0(\tilde{M})\arrow{r}{d_{\tilde{M}}}&\Lambda^1(M)\arrow{r}{d_{\tilde{M}}}& \cdots \arrow{r}{d_{\tilde{M}}}&\Lambda^n(\tilde{M}),
\end{tikzcd}
\]
of compact support differential forms on $\tilde{M}$.
Passing to $L^2$-completions, we obtain a complex of Hilbert spaces
\[
\begin{tikzcd}
\Lambda^0_{L^2}(\tilde{M})\arrow{r}{d_{\tilde{M}}}&\Lambda^1_{L^2}(\tilde{M})\arrow{r}{d_{\tilde{M}}}& \cdots \arrow{r}{d_{\tilde{M}}}&\Lambda^n_{L^2}(\tilde{M}).
\end{tikzcd}	
\]
Since it is convenient to work with \emph{closed} unbounded operators, here we shall denote by $d_{\tilde{M}}$ the \emph{operator-closure} of the de Rham differential on $\Lambda^*_{L^2}(\tilde{M})$.

Let $*$ be the Hodge-$*$ operator.
Define
\[
S_{\tilde{M}}(\omega)=i^{p(p-1)+[\frac{n}{2}]}*\omega,\;\omega\in \Lambda^p(\tilde{M}),
\]
which is a self-adjoint operator with $S_{\tilde{M}}^2=1$, anti-commutes with the operator $d_{\tilde{M}} + d^*_{\tilde{M}}$.
And $S_{\tilde{M}}$ a \emph{Poincar\'e duality operator} of $(\bigoplus \Lambda^p_{L^2}(\tilde{M}),d)$.
\end{example}

\begin{example}
Let $X$ be a closed oriented topological manifold of dimension $n$ and let $\tilde{X}$ be a regular $G$-covering of $X$, and $G$ is a discrete group.
Moreover, we have the following maps
\[
\begin{tikzcd}[column sep=small]
M  \arrow[rr, "f"] \arrow[dr,"\phi"'] & & N  \arrow[dl,"\psi"]\\
& X  &
\end{tikzcd}	
\]
where, $f:M \to N$ is a smooth orientation-preserving homotopy equivalence between two closed manifolds $M$ and $N$, $\phi$ and $\psi$ are continuous maps, such that $\phi=\psi\circ f$.
Let $\tilde{N}=\psi^*(\tilde{X})$ and $\tilde{M}=f^*(\tilde{N})=\phi^*(\tilde{X})$ be the pull back coverings.
We have the following complex of Hilbert spaces
\medskip

\begin{adjustbox}{center}
\begin{tikzcd}
\Lambda^0_{L^2}(\tilde{M})\oplus \Lambda^0_{L^2}(\tilde{N})\arrow{r}{d}&\Lambda^1_{L^2}(\tilde{M})\oplus\Lambda^1_{L^2}(\tilde{N}) \arrow{r}{d}& \cdots \arrow{r}{d}&\Lambda^n_{L^2}(\tilde{M})\oplus\Lambda^n_{L^2}(\tilde{N}),
\end{tikzcd}	
\end{adjustbox}
where, $d :=\begin{pmatrix}
d_{\tilde{M}} & 0\\
0 & d_{\tilde{N}}
\end{pmatrix}$.

We denote $S :=\begin{pmatrix}
S_{\tilde{M}} & 0\\
0 & -S_{\tilde{N}}
\end{pmatrix}$.
Then $S$ is a \emph{Poincar\'e duality operator} of $(L^2(\Lambda(\tilde{M})) \bigoplus L^2(\Lambda(\tilde{N})),d)$.
Moreover, by item 5 at the end of subsection \ref{ssec.HSsubmersion} and Lemma \ref{lemma homo poincare duality topic}, for $t \in [0,1]$, operators
\begin{eqnarray*}
&&\begin{pmatrix}
S_{\tilde{M}} & 0\\
0 & -(1-t)S_{\tilde{N}}-t T_{f}^*S_{\tilde{M}} T_{f}\end{pmatrix}, \begin{pmatrix}
\cos( t\frac{\pi}{2})S_{\tilde{M}} & \sin(t\frac{\pi}{2}) S_{\tilde{M}} T_{f}\\
\sin(t\frac{\pi}{2}) T_{f}^*S_{\tilde{M}} & -\cos(t\frac{\pi}{2}) T_{f}^* S_{\tilde{M}} T_{f}\end{pmatrix},\\
&&\begin{pmatrix}
\cos( t\frac{\pi}{2})S_{\tilde{M}} & \sin(t\frac{\pi}{2}) S_{\tilde{M}} T_{f}\\
\sin(t\frac{\pi}{2}) T_{f}^*S_{\tilde{M}} & -\cos(t\frac{\pi}{2}) T_{f}^* S_{\tilde{M}} T_{f}
\end{pmatrix},  \begin{pmatrix}
0& e^{it \pi}S_{\tilde{M}} T_{f}\\
e^{-it\pi} T_{f}^*S_{\tilde{M}} & 0
\end{pmatrix}
\end{eqnarray*}
are all \emph{Poincar\'e duality operators} of $(L^2(\Lambda(\tilde{M})) \bigoplus L^2(\Lambda(\tilde{N})),d)$.
\end{example}

\begin{remark}
If $X, M, N$ are not assumed to be compact, then ``homotopy equivalece'' $f$ needs to be replaced by controlled one instead.
\end{remark}

\section{Proof of product formula}\label{subsection X times R proof}
Now, we will prove the even case of the product formula in subsection \ref{subsection X times R}.
If $\theta=(M,\partial M,\phi, N, \partial N,\psi, f) \in L_n(\pi_1 X)$, let $ \theta \times  \mathbb{R}$ be the product of $\theta $ and $\mathbb{R}$, which defines an element in $L_{n+1}(\pi_1 X)$.
Here various undefined terms take the obvious meanings \cite{WXY16}.

Note that the construction in subsection \ref{subsection M infty and N infty} also applies to $\theta \times  \mathbb{R}$ and defines a $K$-theory class in $K_{n+1}(C^*(\tilde{X}\times  \mathbb{R} )^G)$.
And there is a natural homomorphism $\alpha : C^*(\tilde{X} )^G \otimes  C^*_L(\mathbb{R}) \to C^*(\tilde{X}\times  \mathbb{R} )^G$.

\begin{proposition}\label{proposition times proposition proof}
With the same notation as above, we have
\[
k_n \cdot \alpha_*\left(\text{Ind}(\theta)\otimes \text{Ind}_L(\mathbb{R})\right)=\text{Ind}(\theta \times  \mathbb{R})
\]
in $K_{n+1}(C^*(\tilde{X}\times \mathbb{R} )^G)$, where $\text{Ind}_L(\mathbb{R})$ is the $K$-homology class of the signature operator on $\mathbb{R}$ and $k_n=1$ if $n$ is even and $2$ if $n$ is odd.
\end{proposition}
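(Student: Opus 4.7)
The plan is to mimic the argument of \cite[Appendix D]{WXY16}, but carry it out for unbounded Hilbert--Poincar\'e complexes. First I would set up the tensor product decomposition of the ambient Hilbert spaces: using the product Riemannian metric on $\tilde{M}_\infty \times \mathbb{R}$ and $\tilde{N}_\infty \times \mathbb{R}$, one has canonical unitary identifications
\[
L^2(\Lambda(\tilde{M}_\infty \times \mathbb{R})) \;\cong\; L^2(\Lambda(\tilde{M}_\infty)) \,\widehat\otimes\, L^2(\Lambda(\mathbb{R})),
\]
and similarly for $\tilde{N}_\infty \times \mathbb{R}$. Under this identification the de Rham differential and its adjoint on the product become graded sums $d_\infty \widehat\otimes 1 + 1 \widehat\otimes d_\mathbb{R}$ (and similarly for $d^*$), while the Hodge--Poincar\'e duality operator becomes, up to the standard grading signs, $S_{\tilde{M}_\infty} \widehat\otimes S_\mathbb{R}$. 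Note that $(\theta \times \mathbb{R})$ has boundary $\partial M \times \mathbb{R}$ and $\partial N \times \mathbb{R}$, whose cylindrical ends are $(\partial M \times \mathbb{R}) \times [1,\infty)$; along these ends the map $S_{f \times \mathrm{id}}'$ and $S_{f \times \mathrm{id}}$ constructed in Section \ref{subsection M infty and N infty} can be arranged, by the freedom in choosing the cut-off parameters $N_i$ and the submersion data, to factor as $S_f \widehat\otimes S_\mathbb{R}$ and $S_f' \widehat\otimes S_\mathbb{R}$ respectively.

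Next I would use this tensor factorization to express the representative of $\text{Ind}(\theta \times \mathbb{R})$. Denoting by $D^{\text{prod}} = D \widehat\otimes 1 + 1 \widehat\otimes D_\mathbb{R}$ the product signature operator and by $S^{\text{prod}}_f = S_f \widehat\otimes 1$ the corresponding ``almost Poincar\'e duality'' operator on the product, one checks (by the same invertibility argument as in Proposition \ref{proposition almost invertible}, applied with $\alpha$ sufficiently small) that
\[
\bigl(D^{\text{prod}} + \alpha S^{\text{prod}}_f\bigr)\bigl(D^{\text{prod}} - \alpha S^{\text{prod}\prime}_f\bigr)^{-1}
\]
represents $\text{Ind}(\theta \times \mathbb{R})$ in $K_1(C^*(\tilde{X} \times \mathbb{R})^G)$. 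On the other hand, $\alpha_*(\text{Ind}(\theta) \otimes \text{Ind}_L(\mathbb{R}))$ is by definition the external Kasparov product, which can be computed explicitly from the projections $P_+(D \pm \alpha S_f)$ on the first factor and a representative of $\text{Ind}_L(\mathbb{R})$ with arbitrarily small propagation (using Yu's explicit local index class on $\mathbb{R}$, as recalled in \cite{Y97}).

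The core of the argument is then a homotopy of invertibles, carried out in $K_1$ of the target algebra: by using the Mehler-type functional calculus formula (as in Lemma \ref{lemma even functional cal for perturb}) together with the smoothing factor $\rho$ and the fact that $\alpha S^{\text{prod}}_f (D^{\text{prod}} \pm i)^{-1}$ lies in $C^*(\tilde{X} \times \mathbb{R})^G$, one can homotope the Cayley-type representative of $\text{Ind}(\theta \times \mathbb{R})$ to the external product representative; the homotopy is essentially the standard ``rotation'' between $(P_+ \widehat\otimes U + P_- \widehat\otimes 1)$-style formulas and Cayley transforms. The factor $k_n = 1$ in the even case appears trivially; in the odd case (which we are not proving here) it would arise from the mismatch between the signature operator seen as self-adjoint graded versus ungraded.

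The main obstacle is the boundary bookkeeping: the modifications of $S_f$ along the cylindrical end of $M_\infty$ must be compatible, under the tensor decomposition, with the corresponding modifications of $S_{f \times \mathrm{id}}$ along the cylindrical end of $(M \times \mathbb{R})_\infty$. In particular, one must verify that along $\partial M \times \mathbb{R} \times [1, \infty)$ the \emph{family} of Poincar\'e duality operators interpolating $S_{\partial N}$ to $\pm T_{\partial f}^* S_{\partial M} T_{\partial f}$ (and eventually to its negative) behaves compatibly with the extra $\mathbb{R}$-factor. This is exactly parallel to the technical bookkeeping in \cite[Appendix D]{WXY16}; once the tensor factorization is in place, the $K$-theoretic homotopy is formal.
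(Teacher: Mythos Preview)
Your overall strategy --- tensor-decompose the product Hilbert space, identify the product almost-Poincar\'e-duality operator, and compare with the external product --- matches the paper's. However, the decisive computational step in the paper is different from what you sketch, and your proposal is too vague at exactly that point to count as a proof.

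The paper does \emph{not} work with the graded tensor product formalism you invoke. Instead it collapses $\Lambda^1(\mathbb{R})\cong\Lambda^0(\mathbb{R})$ via $h\,dt\mapsto h$, so that $\Lambda^{even}_{\times\mathbb{R}}\cong\Lambda\otimes\Lambda^0(\mathbb{R})$ as an \emph{ungraded} Hilbert space, and under this identification the product operator becomes simply $\tilde D=D\otimes 1-1\otimes iD_{\mathbb{R}}$ with $S_{f\times\mathrm{id}}=S_f\otimes 1$ (no $S_{\mathbb{R}}$ factor; your two statements $S_f\widehat\otimes S_{\mathbb{R}}$ and $S_f\widehat\otimes 1$ are inconsistent, and only the second is correct after the identification). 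The representative of $\text{Ind}(\theta\times\mathbb{R})$ is then
\[
\bigl((D+\alpha S_f)\otimes 1-1\otimes iD_{\mathbb{R}}\bigr)\bigl((D-\alpha S'_f)\otimes 1-1\otimes iD_{\mathbb{R}}\bigr)^{-1}.
\]
The key move is now purely algebraic: since $D+\alpha S_f$ and $D-\alpha S'_f$ are self-adjoint and invertible, each is homotopic through invertibles to its sign function $P_+-P_-$ (resp.\ $P'_+-P'_-$). Substituting and multiplying out explicitly, the paper obtains
\[
\bigl(P_+\otimes(1-iD_{\mathbb{R}})-P_-\otimes(1+iD_{\mathbb{R}})\bigr)\bigl(P'_+\otimes(1-iD_{\mathbb{R}})^{-1}-P'_-\otimes(1+iD_{\mathbb{R}})^{-1}\bigr),
\]
which after a short $K_1$ manipulation equals $([P_+]-[P'_+])\otimes[(D_{\mathbb{R}}+i)(D_{\mathbb{R}}-i)^{-1}]=\text{Ind}(\theta)\otimes\text{Ind}_L(\mathbb{R})$.

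Your appeal to ``Mehler-type functional calculus'' and a ``rotation between $(P_+\widehat\otimes U+P_-\widehat\otimes 1)$-style formulas and Cayley transforms'' does not supply this computation; nothing of the sort is needed, and Lemma~\ref{lemma even functional cal for perturb} plays no role here. Likewise the ``boundary bookkeeping'' you flag as the main obstacle is a non-issue in the paper: once one takes $S_f\otimes 1$ as the almost-duality operator on the product, there is nothing further to check along $\partial M\times\mathbb{R}\times[1,\infty)$. The substance of the proof is the sign-function homotopy followed by the explicit product identity above; your outline does not contain it.
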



Let $Z$ and $Z'$ be (smooth) manifolds of dimensional $n$ and $n'$ respectively.
Then $L^2(\Lambda (Z\times Z'))=L^2(\Lambda (Z))\otimes L^2(\Lambda (Z'))$.
$d_{Z\times Z'}=d_{Z} \hat{\otimes}1+1\hat{\otimes} d_{Z'}$, where $\hat{\otimes}$ stands for graded tensor product.
And $S_{Z\times Z'}(x\otimes y)=(-1)^{(n-p)q}S_{Z}(x)\otimes S_{Z'}(y)$, where $x\otimes y\in \Lambda^p(Z)\otimes\Lambda^q(Z')$.

\begin{prf}
With the notations as above, and borrow the notations from subsection \ref{subsection M infty and N infty}.

We use $\Lambda$ denote $\Lambda(\tilde{M}_\infty)\oplus\Lambda(\tilde{N}_\infty)$, and $\Lambda_{\times\mathbb{R}}$ denote $\Lambda(\tilde{M}_\infty\times\mathbb{R})\oplus\Lambda(\tilde{N}_\infty\times\mathbb{R})$.

By $h dt\mapsto h$ we can identify $\Lambda^{1}(\mathbb{R})$ with $\Lambda^{0}(\mathbb{R})$.
With this identification, we have $d_{\mathbb{R}}^* = -d_{\mathbb{R}}$ and $d_{\mathbb{R}}$ is skew-adjoint.

Then we have
\begin{align*}
\Lambda_{\times\mathbb{R}}^{odd}=&(\Lambda^{even}\otimes\Lambda^{1}(\mathbb{R}))\oplus(\Lambda^{odd}\otimes\Lambda^{0}(\mathbb{R}))\\
\cong & (\Lambda^{even}\oplus\Lambda^{odd})\otimes\Lambda^{0}(\mathbb{R}) = \Lambda\otimes \Lambda^{0}(\mathbb{R});\\
\Lambda_{\times\mathbb{R}}^{even}=&(\Lambda^{even}\otimes\Lambda^{0}(\mathbb{R}))\oplus(\Lambda^{odd}\otimes\Lambda^{1}(\mathbb{R}))\\
\cong &(\Lambda^{even}\oplus\Lambda^{odd})\otimes\Lambda^{0}(\mathbb{R})= \Lambda\otimes \Lambda^{0}(\mathbb{R}).	
\end{align*}

If we denote
\[
d_{\infty}:=\begin{pmatrix}
d_{\tilde{M}_\infty}&0\\
0 &d_{\tilde{N}_\infty}
\end{pmatrix}\text{~,~}
\tilde{d}_{\infty}:=\begin{pmatrix}
d_{\tilde{M}_\infty\times  \mathbb{R}}&0\\
0 &d_{\tilde{N}_\infty\times  \mathbb{R}}
\end{pmatrix}
\]
and $D_{\mathbb{R}}:=id_{\mathbb{R}}$.

Then as a map from $\Lambda\otimes \Lambda^{0}(\mathbb{R})$ to $\Lambda\otimes \Lambda^{0}(\mathbb{R})$.
We have
\begin{gather*}
\begin{aligned}
\tilde{d}_{\infty}+\tilde{d}_{\infty}^*&=(d_{\infty}+d_{\infty}^*)\otimes1-1\otimes i D_{\mathbb{R}};	\\
S_{(\tilde{M}_\infty\sqcup\tilde{N}_\infty) \times  \mathbb{R}}&=S_{\tilde{M}_\infty\sqcup\tilde{N}_\infty}\otimes 1.	
\end{aligned}
\end{gather*}

We now define the ``\emph{Poincar\'e duality operators}".
As the equation above we can take the ``\emph{Poincar\'e duality operators}" as $S_f\otimes1$ and $S'_f\otimes1$.
And obviously, everything involved here is \emph{controlled} over $\tilde{X}\times \mathbb{R}$.
By Proposition \ref{proposition almost invertible}.
We know that $\text{Ind}(\theta \times  \mathbb{R})$ can be represented by invertible operator
\[
(D\otimes1-1\otimes iD_{\mathbb{R}}+\alpha S_f\otimes 1)(D\otimes1-1\otimes iD_{\mathbb{R}}-\alpha S'_f\otimes 1)^{-1}
\]
in $B(L^2(\Lambda_{\times\mathbb{R}}^{even}))$.

Recall that $(D+\alpha S_f)$ and $(D-\alpha S'_f)$ is a self-adjoint invertible operator.
Therefore, $(D+\alpha S_f)$ and $(D-\alpha S'_f)$ is homotopic to $P_{+}(D+\alpha S_f)-P_{-}(D+\alpha S_f)$ and  $P_{+}(D-\alpha S'_f)-P_{-}(D-\alpha S'_f)$ through a path of invertible elements, where $P_{\pm}(D+\alpha S_f)$ and  $P_{\pm}(D-\alpha S'_f)$ is the positive/negative projection of $(D+\alpha S_f)$ and $(D-\alpha S'_f)$.

For notational simplicity, we use $P_{\pm}$ and $P'_{\pm}$ denote $P_{\pm}(D+\alpha S_f)$ and  $P_{\pm}(D-\alpha S'_f)$ respectively.

We see that
\[
D\otimes1-1\otimes iD_{\mathbb{R}}+\alpha S_f\otimes 1=(D+\alpha S_f)\otimes 1-1\otimes iD_{\mathbb{R}}
\]
is homotopic to
\[
(P_+-P_-)\otimes1-(P_++P_-)\otimes iD_{\mathbb{R}}=P_+\otimes(1-iD_{\mathbb{R}})-P_-\otimes(1+iD_{\mathbb{R}}).
\]
Similarly,
\[
D\otimes1-1\otimes iD_{\mathbb{R}}-\alpha S'_f\otimes 1=(D-\alpha S'_f)\otimes 1-1\otimes iD_{\mathbb{R}}
\]
is homotopic to
\[
(P'_+-P'_-)\otimes1-(P'_++P'_-)\otimes iD_{\mathbb{R}}=P'_+\otimes(1-iD_{\mathbb{R}})-P'_-\otimes(1+iD_{\mathbb{R}}).
\]
A routine calculation shows that
\[
(P'_+\otimes(1-iD_{\mathbb{R}})-P'_-\otimes(1+iD_{\mathbb{R}}))^{-1}=P'_+\otimes(1-iD_{\mathbb{R}})^{-1}-P'_-\otimes(1+iD_{\mathbb{R}})^{-1}.
\]
It follows that,
\begin{align*}
&[(P_+\otimes(1-iD_{\mathbb{R}})-P_-\otimes(1+iD_{\mathbb{R}}))(P'_+\otimes(1-iD_{\mathbb{R}})^{-1}-P'_-\otimes(1+iD_{\mathbb{R}})^{-1}]\\
=&\begin{aligned}[t][P_+P'_+\otimes1-P_+ P'_-\otimes(1-iD_{\mathbb{R}})(1+iD_{\mathbb{R}})^{-1}\\
 {}+P_-P'_+\otimes(1+iD_{\mathbb{R}})(iD_{\mathbb{R}}-1)^{-1}\\
 {}+P_-P'_-\otimes1] 
\end{aligned}\\
=&\begin{aligned}[t][(1-P_+)\otimes1)(P'_+\otimes(1+iD_{\mathbb{R}} )(iD_{\mathbb{R}}-1)^{-1}+(1-P'_+)\otimes1)\\
{}+(P_+\otimes(iD_{\mathbb{R}}-1)(1+iD_{\mathbb{R}})^{-1}]
\end{aligned}\\
=&([P_+]-[P'_+])\otimes[(D_{\mathbb{R}}+i)(D_{\mathbb{R}}-i)^{-1}]
\end{align*}

the last term is precisely $\text{Ind}(\theta)\otimes\text{Ind}_L(\mathbb{R})$.

To summarize, when $n$ is even, we have proved that
\[
\alpha_*\left(\text{Ind}(\theta)\otimes \text{Ind}_L(\mathbb{R})\right)=\text{Ind}(\theta \times  \mathbb{R}).
\]	
\end{prf}

\bibliography{additive}

\end{document}